\documentclass[11pt,reqno]{amsart}
\usepackage{amsbsy,amsfonts,amsmath,amssymb,amscd,amsthm}
\usepackage{mathrsfs,float,color}
\usepackage[mathcal]{euscript}
\usepackage{subfigure,enumitem,graphicx,epstopdf}
\usepackage[a4paper,text={6.1in,8in},centering,top=1.5in]{geometry}
\usepackage{pxfonts,epstopdf}
\usepackage[colorlinks=true,linkcolor=blue,citecolor=green]{hyperref}
\usepackage{srcltx}
\usepackage[margin=20pt,font=small,labelfont=bf,textfont=it]{caption}
\usepackage{overpic}
\usepackage{bbm}

\small\normalsize
\setlength{\parskip}{.05in} \setlength{\textheight}{22cm}

\theoremstyle{plain}
\newtheorem{mainthm}{Theorem}

\newtheorem{thm}{Theorem}[section]
\newtheorem{cor}[thm]{Corollary}
\newtheorem{lem}[thm]{Lemma}
\newtheorem{claim}[thm]{Claim}
\newtheorem{prop}[thm]{Proposition}

\theoremstyle{definition}

\newtheorem{rem}[thm]{Remark}

\newcommand{\eqdef}{\stackrel{\scriptscriptstyle\rm def}{=}}

\newcommand{\N}{\mathbb{N}}

\numberwithin{equation}{section}

\begin{document}
\title[Periodic points and measures for a class of skew products]{Periodic points and measures for a class of skew products}
\author[M. Carvalho]{Maria Carvalho}
\address{Maria Carvalho\\ Centro de Matem\'{a}tica da Universidade do Porto\\ Rua do
Campo Alegre 687\\ 4169-007 Porto\\ Portugal}
\email{mpcarval@fc.up.pt}

\author[S. A. P\'{e}rez]{Sebasti\'{a}n A. P\'{e}rez}
\address{Sebasti\'{a}n A. P\'{e}rez Opazo \\ Centro de Matem\'{a}tica da Universidade do Porto\\ Rua do
Campo Alegre 687\\ 4169-007 Porto\\ Portugal}
\email{sebastian.opazo@fc.up.pt}

\date{\today}
\thanks{This research was partially supported by CMUP (UID/MAT/00144/2019) which is funded by FCT with national (MCTES) and European structural funds through the programs FEDER, under the partnership agreement PT2020. SP also acknowledges financial support from the postdoctoral grant PTDC/CTM/BIO-4043-2014, under the project UID/MAT/00144/2019.}
\keywords{Skew product; Partial hyperbolicity; Entropy-expansive; Asymptotically per-expansive; Symbolic extension; Physical measure; SRB measure.}
\subjclass[2010]{Primary 37D35, 
37A35, 
37D30. 
Secondary 37A05, 37A30. 
}

\begin{abstract}
We consider the open set constructed by M. Shub in \cite{S1971} of partially hyperbolic
skew products on the space $\mathbb{T}^2 \times \mathbb{T}^2$ whose non-wandering set is not stable.
We show that there exists an open set $\mathcal{U}$ of such diffeomorphisms such that if $F_S \in \mathcal{U}$ then its measure of maximal entropy is unique, hyperbolic and, generically, describes the distribution of periodic points. Moreover, the non-wandering set of such an $F_S \in \mathcal{U}$ contains closed invariant subsets carrying entropy arbitrarily close to the topological entropy of $F_S$ and within which the dynamics is conjugate to a subshift of finite type. Under an additional assumption on the base dynamics, we verify that $F_S$ preserves a unique SRB measure, which is physical, whose basin has full Lebesgue measure and coincides with the measure of maximal entropy. We also prove that there exists a residual subset $\mathfrak{R}$ of $\mathcal{U}$ such that if $F_S \in \mathfrak{R}$ then the topological and periodic entropies of $F_S$ are equal, $F_S$ is asymptotic per-expansive, has a sub-exponential growth rate of the periodic orbits and admits a principal strongly faithful symbolic extension with embedding.
\end{abstract}

\maketitle

\setcounter{tocdepth}{2}

\section{Introduction}
Let $f: M \to M$ be a diffeomorphism of a manifold into itself and $\Omega(f)$ be its non-wandering set. When $\Omega(f)$ does not admit a hyperbolic structure, it may be difficult to describe completely its orbit structure. Motivated by this problem, R. Bowen suggested to look for invariant components of $\Omega(f)$ with large entropy on which the dynamics of $f$ may be simpler to characterize. The key idea is to find closed invariant subsets, say \textit{topological horseshoes}, within which the dynamics is conjugate to subshifts of finite type that may be good approximations, in some sense, of the global dynamics. For instance, this strategy might provide information on the topological entropy of a complicated dynamics by taking the least upper bound over its restrictions to those horseshoes. In this case, the system is said to be a \textit{limit of horseshoes in the sense of the entropy}. L.-S. Young studies in \cite{Y1981} systems that are limits of this type, including piecewise monotonic maps of the interval, the Poincar\'e map of the Lorenz attractor \cite{Guc1976} and Abraham-Smale's examples \cite{AS1970}, leaving unsolved the case of the partially hyperbolic, robustly transitive, entropy-expansive and non-$\Omega$-stable skew products introduced by Shub in \cite{S1971}. In this work we consider precisely these skew products, explore the dynamical properties of their measures of maximal entropy and thereby show, on Section~\ref{se.properties}, that they are indeed limits of horseshoes.

The second question we address here concerns the study of the distribution of periodic points and measures of maximal entropy. Denote by $\mathcal{U}$ the open set of Shub's examples as constructed in \cite{NY1983} and by $F_S$ any of its elements. It is known that $F_S$ has a unique measure of maximal entropy, and so one expects this measure to have a strong tie with other dynamical properties.
In particular, it would be relevant to show that this measure describes the distribution of the periodic points of $F_S$ (meaning that it is the weak$^*$-limit of the sequence of Dirac measures supported on the sets of periodic points) and that the asymptotic exponential growth rate of the number of periodic orbits with the period (the so called \emph{periodic entropy}) is equal to the topological entropy of $F_S$. We will prove that these two attributes, which are known to be valid within the uniformly hyperbolic setting (cf. \cite{B1970}), also hold on a residual subset $\mathfrak{R}$ of $\mathcal{U}$. Both properties are a consequence of the existence of a semi-conjugation between $F_S$ and a uniformly hyperbolic dynamics, besides a careful analysis of the periodic fibers induced by the semi-conjugation. Thereby, our study conveys a satisfactory description of the symbolic dynamics of Shub's examples. More precisely, we show that in $\mathfrak{R}$ any diffeomorphism has a sub-exponential growth rate of the periodic orbits in arbitrarily small scales (the so called \emph{asymptotic per-expansiveness}); this result enables us to build a
symbolic extension, from whose properties we conclude that, generically in $\mathcal{U}$, the set of Borel invariant probability measures is homeomorphic to the space of Borel probability measures invariant by a subshift of finite type.
%
The proofs of these assertions will be presented on Sections~\ref{se.proof-Theorem-A} and \ref{se.proof-Theorem-B}.

For Anosov diffeomorphisms and, more generally, $C^2$ Axiom A attractors, the work of Bowen, Ruelle and Sinai (we refer the reader to \cite{B1975} and references therein) proved the existence of a unique invariant probability measure, the so-called SRB measure, that is characterized by obeying Pesin's formula \cite{Pes1977}. From Ledrappier and L.-S. Young's work \cite{LY1985}, the property that defines an SRB measure is known to be equivalent to the existence of a disintegration of the measure in conditional measures on unstable manifolds which are absolutely continuous with respect to Lebesgue measure. Moreover, the SRB measure is also the unique physical measure (cf. \cite[Theorem 4.12]{B1975}; a thorough essay on the existence and uniqueness of both SRB and physical measures within more general settings may be read in \cite{Y2002}). For Shub's examples, the existence of an SRB measure was proved in \cite{CY2005}. Besides, under the additional assumption that the base dynamics is the product of two linear hyperbolic automorphisms of the $2$-torus, $F_S$ is mostly contracting with a minimal strong unstable foliation, and so it has a unique SRB measure whose basin of attraction has full Lebesgue measure (cf. \cite{BV2000}). Consequently, the SRB measure of $F_S$ is also its unique physical measure and coincides with its measure of maximal entropy, thus inheriting this property from the conservative base dynamics. 
More detailed information will be given on Section~\ref{se.proof-Theorem-C}.


\section{Main results}\label{results}

It is known that a diffeomorphism $f:M\to M$ on a compact Riemannian manifold $M$ satisfying the Axiom A condition and without cycles is expansive and has the specification property. So it preserves a unique invariant probability measure with maximal entropy which describes the distribution of the periodic points. Moreover, for these systems the topological and periodic entropies are equal (cf. \cite{B1971}). Summarizing:
\begin{itemize}
\item \emph{Uniqueness}: $f$ preserves a unique probability measure $\mu$ satisfying $h_{\mu}(f)=h_{\mathrm{top}}(f)$, where $h_{\mathrm{top}}(f)$ denotes the topological entropy of $f$ (definition in \cite{W1981}).
\\
\item \emph{Distribution of periodic points}: $\mu$ is the limit in the weak$^*$ topology of the sequence of equidistributed averages supported on the periodic points of $f$.
\\
\item \emph{Equal topological and periodic entropies}: $\lim_{n\, \to \, +\infty}\,\frac{1}{n}\,\log\,\#\, \mathrm{Per}_n(f) = h_{\mathrm{top}}(f)$, where $\mathrm{Per}_n(f)$ stands for the set of $n$th periodic points of $f$.
\\
\item \emph{Symbolic dynamics}: There exists a principal strongly faithful symbolic extension with embedding (reminding a similar property valid for Axiom A systems -- see~\cite{Man,B2017}).
\end{itemize}

Since specification and expansivity are not valid in general outside the hyperbolic world, the previous properties are not expected within this setting (cf. \cite{BDF,Ka1999}). Nonetheless, in a broad class of non-hyperbolic systems the existence of at least one probability measure of maximal entropy is also guaranteed. For instance, this is valid for entropy-expansive diffeomorphisms (cf. \cite{Mz1976}). And it was shown in \cite{CY2005} (see also \cite{DF2011, DFPV2012} for generalizations) that, when the central bundle of $f$ is one-dimensional, then $f$ is entropy-expansive. So Shub's examples are endowed with a probability measure of maximal entropy. However, even if we assume that the system is topologically mixing, uniqueness of such a special measure is not certain (cf. \cite{HHTU2012}). For Shub's examples the uniqueness of the measure of maximal entropy was obtained in \cite{NY1983} (a generalization for equilibrium states may be read in \cite{CP2018}). Nevertheless, without additional assumptions this measure may not describe the distribution of the periodic points and the topological entropy may be different from the periodic one. Yet, as we will explain, Shub's examples, which are obtained through a homotopic deformation of a direct product of two hyperbolic diffeomorphisms,
may be constructed in such a way that, if we restrict to a suitable residual subset of them, then we are able to keep control on the periodic orbits even at arbitrarily small scales.
This is our first result. 

Denote by $\mathcal{U}$ the open set of Shub's examples, whose construction will be recalled on Section~\ref{se.Shub-examples}, and by $F_S$ any of its elements.

\begin{mainthm}\label{teo.A} There exists a residual subset $\mathfrak{R}$ of the open set $\mathcal{U}$ such that, if $F_S$ belongs to $\mathfrak{R}$, then:
\begin{enumerate}
\item[(a)] $h_{\mathrm{top}}(F_S) = \lim_{n \,\to \, +\infty}\,\frac{1}{n}\,\log\, \#\,\mathrm{Per}_n(F_S).$
\medskip
\item[(b)] The maximal entropy measure of $F_S$ describes the distribution of periodic points.
\medskip
\end{enumerate}
\end{mainthm}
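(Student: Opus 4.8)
The plan is to exploit the skew-product structure of Shub's examples and the semi-conjugation $\pi \colon F_S \to A$ onto a uniformly hyperbolic model $A$ (a product of linear hyperbolic toral automorphisms on the base, combined with the hyperbolic behaviour in the fiber away from the deformation region), and then to control the periodic fibers $\pi^{-1}(p)$ over periodic points $p$ of $A$. The first step is to set up this semi-conjugation carefully: since $F_S$ is a homotopic deformation of a direct product $f \times g$ supported in a small region, there is a continuous surjection $\pi$ with $\pi \circ F_S = A \circ \pi$, and over each $A$-periodic orbit the fiber dynamics of $F_S$ is a one-dimensional (central) skew product whose periodic points we must count. Because the central bundle is one-dimensional, $F_S$ is entropy-expansive (by \cite{CY2005}), hence $h_{\mathrm{top}}(F_S) = h_{\mathrm{top}}(A) = h_{\mathrm{top}}(f) + h_{\mathrm{top}}(g)$, which gives the upper bound $\limsup_n \frac{1}{n}\log \#\mathrm{Per}_n(F_S) \le h_{\mathrm{top}}(F_S)$ once we check that each $A$-periodic point lifts to only sub-exponentially many $F_S$-periodic points — this is where the residual set enters.

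The second step is the residual-set argument. The obstruction to equality $h_{\mathrm{top}} = h_{\mathrm{per}}$ and to equidistribution is that over an $A$-periodic point $p$ of period $n$, the return map to the fiber $\pi^{-1}(p)$ could have a whole interval of fixed points (a degenerate ``tangency'' in the central direction) rather than finitely many hyperbolic ones, which would either hide periodic orbits or create too many of them. The idea is to show that, $C^r$-generically in $\mathcal{U}$, for every $A$-periodic point $p$ the fiber return map has only finitely many, and hyperbolically nondegenerate, periodic points at each period, with the count growing sub-exponentially in $n$. I would phrase this via a countable intersection of open dense conditions: for each pair $(n,k)$, the set of $F_S \in \mathcal{U}$ for which every period-$n$ base point carries at most (say) $Cn^k$ central periodic points, all with nonzero central Lyapunov exponent, is open (by persistence of hyperbolic periodic points and compactness of the period-$n$ base orbits) and dense (perturb the fiber maps over the finitely many base periodic orbits of period $\le n$ to remove central degeneracies — a Kupka–Smale-type argument in the central fiber, carried out inside the deformation region so as to stay in $\mathcal{U}$). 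Taking $\mathfrak{R}$ to be this residual intersection (intersected with the residual set needed for (b)), part (a) follows: the lower bound is immediate since even the uniformly hyperbolic part produces $\ge e^{nh_{\mathrm{top}}(A)}$ periodic points, and the upper bound follows from the sub-exponential fiber count combined with entropy-expansiveness.

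For part (b), the plan is to transport the equidistribution statement from $A$ to $F_S$ through $\pi$. The unique measure of maximal entropy $\mu$ of $F_S$ projects under $\pi$ to the measure of maximal entropy of $A$ (by uniqueness upstairs, from \cite{NY1983}, and the entropy-preserving property of the principal semi-conjugation), and $\pi_* \mu$ is the Bowen measure of the hyperbolic system $A$, which equidistributes the periodic points of $A$. The sequence of equidistributed averages $\nu_n$ on $\mathrm{Per}_n(F_S)$ pushes forward to the corresponding averages for $A$, which converge weak$^*$ to $\pi_*\mu$; so any weak$^*$ accumulation point $\nu$ of $(\nu_n)$ satisfies $\pi_* \nu = \pi_* \mu$. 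To upgrade this to $\nu = \mu$ one uses the fiberwise analysis from (a): on $\mathfrak{R}$ the periodic points, for large $n$, concentrate (up to sub-exponentially small defect in the counting) on the part of phase space where the central behaviour is hyperbolic and the dynamics is genuinely horseshoe-like, and there the disintegration of $\nu$ over the base matches the (atomic, equidistributed) conditional structure forced by the hyperbolic fiber return maps, forcing $\nu = \mu$ by the uniqueness of the m.m.e. and the fact that $\mu$ has full entropy $h_{\mathrm{top}}(A)$ along the base.

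The main obstacle I expect is the density part of the residual-set construction: one must perturb $F_S$ so as to make the central fiber return maps over \emph{all} period-$\le n$ base orbits simultaneously nondegenerate, while keeping the perturbed map inside the open set $\mathcal{U}$ (i.e. preserving partial hyperbolicity, robust transitivity, and the skew-product form on which Shub's construction relies). Because $\mathcal{U}$ is itself defined by robust open conditions this should be achievable, but it requires locating the perturbations inside the deformation region and checking that the skew-product structure over the base is respected; handling the interplay between the base's own periodic combinatorics and the one-dimensional central dynamics is the delicate technical point.
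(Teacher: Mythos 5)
Your skeleton is the same as the paper's (semi-conjugation $H$ with $\Phi\times L$, counting $F_S$-periodic points inside the fibers $H^{-1}(P)$ over $P\in\mathrm{Per}_n(\Phi\times L)$, a Kupka--Smale-type residual set, and transporting equidistribution through $H$), but the quantitative heart of the argument is missing and the step you propose in its place would not go through. Your residual set is built from the condition ``every period-$n$ fiber carries at most $Cn^k$ central periodic points, all hyperbolic,'' and you claim density by a Kupka--Smale perturbation in the fibers; but Kupka--Smale-type perturbations only yield \emph{finiteness} and hyperbolicity of periodic points, never a polynomial (or even sub-exponential) bound per fiber, and you give no mechanism that would make the quantitative condition dense. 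The paper does not need such a genericity scheme: it proves a structural dichotomy, namely that for $x_0\in\mathrm{Per}_n(\Phi)$ the fiber composition $g^n_{x_0}=f_{\Phi^{n-1}(x_0)}\circ\cdots\circ f_{x_0}$ is either Anosov (conjugate to $L^n$) or a Derived-from-Anosov whose non-wandering set is $\{\theta_0\}\cup\Lambda^n_{x_0}$ with $\theta_0$ a source and $\Lambda^n_{x_0}$ a one-dimensional hyperbolic attractor; combined with the fact that each $H^{-1}(x,y)$ is a point or a compact connected arc in a center leaf (proved via quasi-isometry of the lifted foliations $\widetilde{\mathcal W}^u$, $\widetilde{\mathcal F}$), this forces the \emph{uniform} bound $1\leqslant \#\big(H^{-1}(x,y)\cap\mathrm{Per}_n(F_S)\big)\leqslant 3$ for every $(x,y)\in\mathrm{Per}_n(\Phi\times L)$, the Kupka--Smale hypothesis being used only to know that periodic points are hyperbolic and the fiber return maps Morse--Smale. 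Your lower bound is also not ``immediate'': the periodic orbits of $\Phi\times L$ that avoid the (fixed) deformation region only realize entropy strictly below $h_{\mathrm{top}}$, so they do not give $\liminf_n\frac1n\log\#\mathrm{Per}_n(F_S)\geqslant h_{\mathrm{top}}(F_S)$; the correct lower bound comes from applying Brouwer's fixed point theorem to $F_S^n$ acting on the invariant arc $H^{-1}(P)$ for each $P\in\mathrm{Per}_n(\Phi\times L)$, which requires the connectedness statement you never establish.

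For part (b) there is a second gap: $H_*$ of the equidistributed averages over $\mathrm{Per}_n(F_S)$ is \emph{not} the equidistributed average over $\mathrm{Per}_n(\Phi\times L)$, but a weighted average with weights $\#\big(H^{-1}(P)\cap\mathrm{Per}_n(F_S)\big)$; with weights that are merely sub-exponential (and unbounded, as in your scheme) you cannot conclude that the weak$^*$ limits of $H_*(\mu_n)$ are $\nu_{\mathrm{max}}$, so the identity $\pi_*\nu=\pi_*\mu$ on which your upgrade rests is unproved, and the ``disintegration matches'' step is not an argument. With the uniform $1$-to-$3$ bound one gets $\frac13\,\nu_n\leqslant H_*(\mu_n)\leqslant 3\,\nu_n$, whence any limit point of $H_*(\mu_n)$ is equivalent to $\nu_{\mathrm{max}}$ and equal to it by ergodicity; then Ledrappier--Walters together with $h_{\mathrm{top}}\big(H^{-1}(x,y)\big)=0$ gives $h_{\mu_0}(F_S)=h_{H_*(\mu_0)}(\Phi\times L)=h_{\mathrm{top}}(F_S)$ for any limit $\mu_0$ of $(\mu_n)$, and uniqueness of the measure of maximal entropy yields $\mu_0=\mu_{\mathrm{max}}$. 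In short: replace the unproved polynomial-count genericity condition by the connectedness of the fibers and the Anosov/DA analysis of the periodic fiber maps; both (a) and (b) hinge on the resulting uniform bound, not on any sub-exponential estimate.
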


As previously mentioned, Shub's examples are entropy-expansive, and this property is a sufficient condition for the existence of a principal symbolic extension. However, if we restrict to $\mathfrak{R}$, the diffeomorphisms satisfy a stronger property, namely the asymptotic per-expansiveness, and such an extension may be constructed preserving the periodic points and inducing a homeomorphism between the corresponding spaces of probability measures.

\begin{mainthm}\label{teo.B} Any diffeomorphism of the residual subset $\mathfrak{R}$ built in Theorem~\ref{teo.A} has a principal strongly faithful symbolic extension with embedding.
\end{mainthm}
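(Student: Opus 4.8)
The plan is to deduce Theorem~\ref{teo.B} from Theorem~\ref{teo.A} by invoking the general machinery relating asymptotic per-expansiveness to the existence of well-behaved symbolic extensions. The starting point is that every $F_S \in \mathcal{U}$ is entropy-expansive (since the central bundle is one-dimensional, by \cite{CY2005}), hence in particular asymptotically $h$-expansive; by Boyle--Fiebig--Fiebig this already gives a principal symbolic extension. To upgrade this to a \emph{strongly faithful} extension \emph{with embedding} for $F_S \in \mathfrak{R}$, I would proceed as follows.

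First, recall from Theorem~\ref{teo.A} (and the analysis feeding into it) that for $F_S \in \mathfrak{R}$ the semiconjugation $\pi$ between $F_S$ and the underlying hyperbolic model has fibers over periodic points that are, in a controlled way, \emph{small}: the periodic orbits of $F_S$ grow sub-exponentially inside each such fiber, which is exactly the asymptotic per-expansiveness property announced in the introduction. Second, I would combine asymptotic per-expansiveness with the classical fact that the uniformly hyperbolic factor is expansive, has the specification property, and is conjugate over a dense set of points to a subshift of finite type. The key structural output I want is: $F_S$ is a principal extension of an expansive system (in fact a sofic or SFT-type system), and simultaneously the periodic-point sets match up bijectively under the coding, with no loss of periodic data at any scale. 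Third, I would feed this into the symbolic extension theory: for asymptotically $h$-expansive maps the entropy structure is ``tame'' enough that the residual entropy vanishes, so any principal symbolic extension is automatically entropy-preserving fiberwise; the additional per-expansiveness ensures the coding map can be chosen injective on the union of all periodic orbits (the ``embedding'' clause) and that it induces a bijection on periodic orbits (``strongly faithful'' in the sense of \cite{Man,B2017}). Finally, since the symbolic extension is principal and strongly faithful with embedding, pushing forward and pulling back invariant probability measures along the coding gives the desired homeomorphism between $\mathcal{M}(F_S)$ and the space of invariant measures of a subshift of finite type, as advertised in the introduction.

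Concretely, I would organize the proof in three steps. Step 1: fix $F_S \in \mathfrak{R}$ and produce the semiconjugation $\pi : \Sigma \to (\mathbb{T}^2\times\mathbb{T}^2, F_S)$ from an SFT $\Sigma$ (built from the hyperbolic base dynamics together with the coding of the central, essentially expanding, direction), verifying that $\pi$ is finite-to-one off a small set and that it restricts to a bijection on periodic orbits — this is where Theorem~\ref{teo.A}(a) and the sub-exponential fiber growth are used. Step 2: check that $\pi$ has zero ``defect'' in the sense of entropy, i.e. $h_{\mathrm{top}}(\Sigma) = h_{\mathrm{top}}(F_S)$ and more strongly $h_\nu(\Sigma\text{-shift}) = h_{\pi_*\nu}(F_S)$ for every $\nu$, using entropy-expansiveness of $F_S$ (so no central entropy is lost) together with the fact that the coding in the central direction adds no entropy. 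Step 3: verify the measure-theoretic conclusion: $\nu \mapsto \pi_*\nu$ is a continuous affine bijection from $\mathcal{M}(\Sigma,\mathrm{shift})$ onto $\mathcal{M}(\mathbb{T}^2\times\mathbb{T}^2, F_S)$ (injectivity from the embedding property on a set of full measure for every invariant measure, surjectivity from the semiconjugation, continuity in the weak$^*$ topology on both sides, and the inverse continuous by compactness), which is precisely the statement that $F_S$ admits a principal strongly faithful symbolic extension with embedding.

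The main obstacle I anticipate is \emph{not} the abstract symbolic-extension theory but the fiber analysis in Step 1: controlling the periodic points of $F_S$ lying in a single $\pi$-fiber and showing that, after passing to the residual subset $\mathfrak{R}$, the coding can be made genuinely injective on periodic orbits rather than merely finite-to-one. The central fibers of Shub's examples are circles on which the induced dynamics is an expanding-like circle endomorphism, so a periodic point of the base can carry many periodic points of $F_S$; the generic assumption must be used to rule out resonances that would create extra periodic points in the fiber or obstruct the embedding. Establishing that this delicate count is robust enough to survive the weak$^*$ limits and yields a homeomorphism (not just a measurable isomorphism) of measure spaces is the technical heart of the argument; everything else is a bookkeeping application of entropy-expansiveness and the structure theory of \cite{Man,B2017}.
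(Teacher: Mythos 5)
Your proposal does not follow the route the paper takes, and the two places where it departs are precisely where the gaps lie. The paper proves Theorem~\ref{teo.B} by verifying the four hypotheses of Burguet's criterion \cite[Main Theorem]{B2017} -- entropy-expansiveness, asymptotic per-expansiveness, zero-dimensionality of $\mathrm{Per}(F_S)$, and a uniform exponential bound on $\#\,\mathrm{Per}_n(F_S)$ -- and then cites that theorem; it never constructs the symbolic extension by hand. Your Step 1 instead proposes to build an SFT extension $\pi\colon\Sigma\to(\mathbb{T}^2\times\mathbb{T}^2,F_S)$ ``from the hyperbolic base dynamics together with the coding of the central, essentially expanding, direction''. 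This cannot work as described: the semiconjugation $H$ goes the other way ($F_S$ is an \emph{extension} of $\Phi\times L$, so the Markov coding of $\Phi\times L$ gives no map onto $F_S$); the central direction is not expanding but dominated, with $\|Df_x|_{E^{\mathrm c}}\|\in[\gamma_1,\gamma_2]$ and negative central exponent on a set of full $\mu_{\mathrm{max}}$-measure; and the $H$-fibers are compact, possibly degenerate, intervals inside center leaves (Lemma~\ref{l.p-classes}), not circles carrying an expanding endomorphism. Since $F_S$ is not expansive, there is no finite generating partition and hence no direct SFT coding at all scales -- this is exactly the obstruction that the abstract theory of \cite{B2017} is designed to bypass, and your outline offers no substitute for it. (Also, a symbolic extension here is only a subshift, not necessarily of finite type, so insisting on an SFT is both unnecessary and unjustified.)

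The second gap is that the one genuinely new ingredient, asymptotic per-expansiveness of $F_S\in\mathfrak{R}$, is asserted rather than proved: you say the sub-exponential growth of periodic orbits ``inside each fiber'' is ``exactly'' per-expansiveness, but Theorem~\ref{teo.A} says nothing about periodic points inside infinite Bowen balls $B^{F_S}_{\infty,\varepsilon}(x_0,y_0)$, which is what $Per^*(F_S)=0$ requires, and an $H$-fiber is not a Bowen ball. The missing argument (Lemma~\ref{le:per-exp} in the paper) uses plaque expansiveness of the center foliation to place any period-$n$ point of $B^{F_S}_{\infty,\varepsilon}(x_0,y_0)$ on the center plaque of $(x_0,y_0)$, then the Anosov/Derived-from-Anosov dichotomy of Proposition~\ref{prop.DA} and the fact that two distinct period-$n$ points of $\Phi\times L$ cannot lie on one stable leaf, to get the uniform bound $\#\big(\mathrm{Per}_n(F_S)\cap B^{F_S}_{\infty,\varepsilon}(x_0,y_0)\big)\leqslant 3$. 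Once this is in place, the remaining hypotheses are cheap: entropy-expansiveness follows from the one-dimensional center bundle \cite{CY2005}, zero-dimensionality of $\mathrm{Per}(F_S)$ from hyperbolicity (hence countability) of periodic points on the Kupka--Smale residual, and the bound on $\frac1n\log\#\,\mathrm{Per}_n(F_S)$ from Corollary~\ref{cor:periodic} and Proposition~\ref{prop.Shub-NY}~(a) (cf. Remark~\ref{re.Kal}). Your Steps 2 and 3 (principality and the homeomorphism of measure spaces) then need no separate proof -- they are part of the conclusion of Burguet's theorem, not inputs you must establish.
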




The construction of Shub's examples ensures that, if both $\Phi$ and $L$ are linear hyperbolic automorphisms of $\mathbb{T}^2$, then the map $F_S$ is mostly contracting and so, according to \cite{BV2000}, it has a finite number of SRB measures whose basins cover Lebesgue almost everywhere. In addition, the strong unstable foliation of $F_S$ is robustly minimal (cf. Proposition~\ref{prop.minimal}), so $F_S$ has in fact a unique SRB measure and its basin has full Lebesgue measure. Under this additional assumption on $\Phi$, the map $F_S$ inherits from $\Phi \times L$ other properties.

\begin{mainthm}\label{teo.C} Suppose that both $\Phi$ and $L$ are linear hyperbolic automorphisms of $\mathbb{T}^2$. Then:
\begin{enumerate}
\item[(a)] The image by $H_*$ of the SRB measure of $F_S\in \mathcal{U}$ is the SRB measure of $\Phi \times L$.
\medskip
\item[(b)] The SRB measure of $F_S \in \mathcal{U}$ is its unique measure of maximal entropy and its unique physical measure.
\end{enumerate}
\end{mainthm}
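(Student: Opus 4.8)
The plan is to leverage the semi-conjugation $H$ between $F_S$ and the product Anosov map $\Phi\times L$, together with the structural properties of "mostly contracting" partially hyperbolic diffeomorphisms established in \cite{BV2000} and the minimality of the strong unstable foliation (Proposition~\ref{prop.minimal}). First I would recall that, since $\Phi$ and $L$ are linear hyperbolic automorphisms of $\mathbb{T}^2$, the map $F_S$ is partially hyperbolic with one-dimensional center, is robustly transitive, and is mostly contracting along the center; hence by \cite{BV2000} it admits finitely many ergodic SRB measures whose basins cover Lebesgue-a.e.\ point, and the robust minimality of the strong unstable foliation forces this number to be one. Call this measure $\nu$; its basin has full Lebesgue measure, so it is the unique physical measure of $F_S$.

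For part (a), I would argue that $H_*\nu$ is an SRB measure for $\Phi\times L$. The key point is that $H$ is a fiber-wise map of the form $H(x,y)=(x,h_x(y))$ that sends strong unstable manifolds of $F_S$ onto unstable manifolds of $\Phi\times L$ (this is how the semi-conjugation is built, using that the base is unchanged and the fiber maps are obtained from the deformation); along these leaves $H$ is absolutely continuous because on each unstable leaf the dynamics is, up to the semi-conjugation, uniformly expanding and the usual bounded-distortion/holonomy arguments apply. Therefore the disintegration of $H_*\nu$ along unstable leaves of $\Phi\times L$ is absolutely continuous with respect to Lebesgue, which is precisely the Ledrappier--Young characterization of SRB measures; since $\Phi\times L$ is Anosov with a unique SRB measure (equal to normalized Lebesgue on $\mathbb{T}^4$), we conclude $H_*\nu$ is that measure. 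I expect the main technical obstacle here to be the absolute continuity of $H$ along unstable leaves: one must control the fiber maps $h_x$ and their distortion uniformly, which requires going back into the construction of Shub's examples and the regularity of the deformation; the transitivity and minimality are needed to ensure the SRB conditional measures are positive on the relevant leaves so that the push-forward computation is meaningful everywhere.

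For part (b), I would combine the uniqueness of the measure of maximal entropy (already known for Shub's examples, from \cite{NY1983}) with an entropy bookkeeping argument. On one hand, $F_S$ has vanishing center Lyapunov exponents "on average" only in the sense that the center exponent is negative for $\nu$ (mostly contracting), so by the Margulis--Ruelle inequality and the structure of the center, $h_\nu(F_S)$ equals the sum of the positive (unstable) exponents, which by Pesin's entropy formula for SRB measures is $h_\nu(F_S)=\int \log|\det Df|_{E^u}|\,d\nu$. On the other hand, since $H$ is a semi-conjugation with uniformly small fibers (indeed the fibers are points or arcs with zero topological entropy, because the center is one-dimensional and $F_S$ is entropy-expansive as recalled in the introduction), the variational principle gives $h_{\mathrm{top}}(F_S)=h_{\mathrm{top}}(\Phi\times L)$ and moreover $h_\mu(F_S)=h_{H_*\mu}(\Phi\times L)$ for every invariant $\mu$; thus the measure of maximal entropy of $F_S$ projects to the (unique) measure of maximal entropy of $\Phi\times L$, which is Lebesgue, and equals $H_*\nu$ by part (a). Since $H_*$ is injective on the relevant class of measures (the fibers being trivial from the entropy standpoint, and using that distinct SRB-type measures cannot collapse), we get that the maximal entropy measure of $F_S$ is exactly $\nu$. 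Combining with the first paragraph, $\nu$ is simultaneously the unique SRB measure, the unique physical measure, and the unique measure of maximal entropy, which is the assertion. The subtle step in (b) is justifying that $H_*$ does not merge the maximal entropy measure of $F_S$ with some other invariant measure, i.e.\ that $H_*\mu=H_*\nu$ forces $\mu=\nu$ among measures of maximal entropy; this follows from entropy-expansiveness (so the entropy is carried at small scales where $H$ is essentially a bijection) together with uniqueness of the m.m.e.\ for $F_S$, but it needs to be stated carefully.
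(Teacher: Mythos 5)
Your reduction of the uniqueness of the SRB/physical measure to \cite{BV2000} plus the robust minimality of the strong unstable foliation is exactly what the paper does (Proposition~\ref{prop.minimal}), and the entropy identity $h_\mu(F_S)=h_{H_*\mu}(\Phi\times L)$ via Ledrappier--Walters is the right tool. The genuine gap is in your part (a). You derive $H_*\mu_{\mathrm{SRB}}=\nu_{\mathrm{SRB}}$ from the claim that $H$ is absolutely continuous along unstable leaves ``because the usual bounded-distortion/holonomy arguments apply''. But restricted to an unstable leaf, $H$ is a topological conjugacy between two \emph{different} expanding dynamics, not a holonomy of a single smooth foliation, and such conjugacies are in general singular with respect to leaf Lebesgue measure unless the unstable Jacobian data at periodic orbits coincide --- which is precisely what is \emph{not} guaranteed here: by construction one only knows $J^u_{F_S}\geqslant \beta_1\beta_2=J^u_{\Phi\times L}$, with strict inequality possible along periodic orbits. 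So the step you yourself flag as ``the main technical obstacle'' is a real obstruction, not a routine distortion estimate. The paper sidesteps leafwise regularity of $H$ entirely (Proposition~\ref{prop.SRB}): by Proposition~\ref{prop.Shub-NY}(b) and Ledrappier--Walters, $h_{\mu_{\mathrm{SRB}}}(F_S)=h_{H_*\mu_{\mathrm{SRB}}}(\Phi\times L)$; Pesin's formula for $\mu_{\mathrm{SRB}}$ together with the pointwise inequality $|J^u_{\Phi\times L}\circ H|\leqslant |J^u_{F_S}|$ then give
$$\int\log|J^u_{\Phi\times L}|\,d\big(H_*\mu_{\mathrm{SRB}}\big)\,\leqslant\,\int\log|J^u_{F_S}|\,d\mu_{\mathrm{SRB}}\,=\,h_{\mu_{\mathrm{SRB}}}(F_S)\,=\,h_{H_*\mu_{\mathrm{SRB}}}(\Phi\times L),$$
so $H_*\mu_{\mathrm{SRB}}$ satisfies Pesin's formula for $\Phi\times L$ and is therefore its unique SRB measure, namely Lebesgue.

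For (b), your detour through ``injectivity of $H_*$ on the relevant class of measures'' is both unnecessary and hard to justify: $H_*$ is far from injective on $\mathscr{P}(\mathbb{T}^2\times\mathbb{T}^2,F_S)$, and entropy-expansiveness does not make $H$ an essential bijection at the level of measures. You already hold all the pieces for the direct argument the paper uses: since $H_*\mu_{\mathrm{SRB}}$ is Lebesgue by part (a), the Ledrappier--Walters identity gives $h_{\mu_{\mathrm{SRB}}}(F_S)=h_{\mathrm{Leb}}(\Phi\times L)=h_{\mathrm{top}}(\Phi\times L)=h_{\mathrm{top}}(F_S)$, so $\mu_{\mathrm{SRB}}$ \emph{is} a measure of maximal entropy of $F_S$ and coincides with $\mu_{\mathrm{max}}$ by the uniqueness from \cite{NY1983}; uniqueness of the physical measure was already secured by the full-basin statement. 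Rewriting (b) this way removes the only shaky step there, but (a) needs to be redone along the entropy/Jacobian lines above.
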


\section{Glossary}\label{se.definitions}

We begin introducing the main definitions used in this work. Given a compact metric space $(X,d)$ and a continuous map $f: X \to X$, denote by $\mathscr{P}(X)$ the set of Borel probability measures on $X$ endowed with the weak$^*$-topology, and by $\mathscr{P}(X,f)$ and $\mathscr{P}_e(X,f)$ its subsets of $f$-invariant and $f$-invariant ergodic elements, respectively.

\subsection{Maximal entropy measures}
For each $\mu$ in $\mathscr{P}(X,f)$, consider the metric entropy $h_\mu(f)$ of $f$ with respect to $\mu$. The Variational Principle \cite[Theorem 9.10]{W1981} states that the topological entropy  $h_{\mathrm{top}}(f)$ of $f$ coincides with the least upper bound of the operator $\mu \,\mapsto \,h_\mu(f)$ restricted to either $\mathscr{P}(X,f)$ or $\mathscr{P}_e(X,f)$. A measure $\mu \in \mathscr{P}(X,f)$ such that $h_{\mu}(f) = h_{\mathrm{top}}(f)$ is called a \textit{measure of maximal entropy} of $f$.

\subsection{Distribution of periodic points}
Assume that the cardinality $\#\,\mathrm{Per}_n(f)$ of the set of fixed points of $f^n$ is finite for every $n\in \mathbb{N}$. We say that a probability measure $\mu\in \mathscr{P}(X,f)$ \textit{describes the distribution of the periodic points of} $f$ if $\mu$ is the weak$^*$ limit of the sequence of probability measures
$$n \in \mathbb{N} \quad \mapsto \quad \frac{1}{\#\,\mathrm{Per}_n(f)} \sum_{x \,\in\, \mathrm{Per}_n(f) }\delta_x$$
where $\delta_x$ denotes the Dirac measure supported at $x$.


\subsection{Expansiveness}\label{ss.ent-exp}

Denote by $B_\rho(x)$ the open ball in the metric $d$ centered at $x$ with radius $\rho$, and by $\overline{B_\rho(x)}$ its closure. Define, for each $n \in \mathbb{N}$, the equivalent metric
$$(x,y) \in X \times X \quad \mapsto \quad d_n(x,y) \,\eqdef\,\, \max_{0 \,\leqslant \, j \,\leqslant \,n-1} \,d(f^j(x), \,f^j(y)).$$

Given $\varepsilon> 0$ and a compact subset $Y \subset X$, a subset $S$ of $Y$ is said to be $(n,\varepsilon)$-\textit{spanning} if for every $y \in Y$ there is $a \in S$ such that $d_n(y,a) \leqslant \varepsilon$. The minimum cardinality of the $(n, \varepsilon)$-spanning subsets of $Y$ is denoted by $r_n(Y, \varepsilon)$. Define
\begin{eqnarray*}
\overline{r}_n(Y,\varepsilon) \,\eqdef\,\,  \limsup_{n \, \to \, +\infty}\, \frac{1}{n}\,\log \, r_n(Y, \varepsilon) \quad \quad \mbox{and} \quad \quad
\overline{h}_{\mathrm{top}}(f,Y) \,\eqdef\, \lim_{\varepsilon \, \to \, 0^+}\, \overline{r}_n(Y,\varepsilon).
\end{eqnarray*}
Having fixed $\varepsilon> 0$ and $x \in X$, consider the set of points in $X$ whose forward orbits by $f$ are $\varepsilon$-close to the orbit of $x$, that is,
$$B^f_{\infty,\,\varepsilon}(x) \,\eqdef\,\, \,\bigcap_{i \,\in \,\mathbb{A}}\,
f^{-i}\,\Big(\,\overline{B_{\varepsilon}(f^{i}(x))}\Big)=\Big\{y\in X: \,\, d(f^i(x),f^i(y))\leqslant \varepsilon,\quad \forall \, i \in \mathbb{A}\Big\}
$$
with $\mathbb{A}=\mathbb{Z}$ if $f$ is invertible and $\mathbb{A}=\mathbb{N}$ otherwise. Consider
$$h^*_{\mathrm{top}}(f, \varepsilon) \,\eqdef\,\, \,\sup_{x \,\in\,X}\, \overline{h}_{\mathrm{top}}(f,B^f_{\infty,\,\varepsilon}(x))\qquad \mbox{and} \qquad h^*_{\mathrm{top}}(f)\eqdef \lim_{\varepsilon\,\to\, 0^+}\,h^*_{\mathrm{top}}(f, \varepsilon).
$$
The map $f$ is said to be \emph{entropy-expansive} if there is $\varepsilon_0 > 0$ such that $h^*_{\mathrm{top}}(f, \varepsilon)= 0$ for every $0 < \varepsilon < \varepsilon_0$, and \textit{asymptotically entropy-expansive} if $h^*_{\mathrm{top}}(f)=0$. Misiurewicz has shown in \cite{Mz1976} that for asymptotically entropy-expansive maps the entropy operator $\mu \in \mathscr{P}(X, f)\to h_{\mu}(f)$ is upper-semicontinuous, guaranteeing the existence of at least a measure of maximal entropy for $f$.

Given $\varepsilon > 0$, define
$$Per(f, \varepsilon)\,\eqdef\,\, \limsup_{n \, \to \, +\infty}\,\frac{1}{n}\,\sup_{ x \, \in \, X}\,
\log \,\#\,\Big(\mathrm{Per}_n(f) \cap B^f_{\infty,\,\varepsilon}(x)\Big) \qquad \mbox{and} \qquad Per^*(f)\eqdef \lim_{\varepsilon\,\to\, 0^+}\, Per(f, \varepsilon).$$
Following \cite{B2017}, the map $f$ is said to be \textit{asymptotically per-expansive} if $Per^*(f) = 0$. For instance, expansive or aperiodic maps are asymptotically per-expansive.
An interesting connection between the entropy, the growth of the cardinality of the periodic orbits with the period and the asymptotic per-expansiveness is given in the next lemma.

\begin{lem}\cite[Lemma 2.2]{Burguet2017}
$\quad \limsup_{n \, \to \, +\infty}\,\frac{1}{n}\, \log \,\#\,\mathrm{Per}_n(f) \leqslant h_{\mathrm{top}}(f) + Per^*(f).$
\end{lem}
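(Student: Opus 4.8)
The plan is to bound $\#\,\mathrm{Per}_n(f)$ by covering $X$ with Bowen $(n,\varepsilon)$-balls and then exploiting the fact that, \emph{among periodic points of the same period}, proximity along a single period forces proximity along the whole (bi-)infinite orbit. Fix $\varepsilon>0$ and, for each $n$, choose an $(n,\varepsilon)$-spanning subset $S_n\subset X$ of minimal cardinality $r_n(X,\varepsilon)$, which is finite since $X$ is compact. By definition, every $x\in\mathrm{Per}_n(f)$ lies in the $d_n$-ball of radius $\varepsilon$ about some $a\in S_n$, so
$$\mathrm{Per}_n(f)\;=\;\bigcup_{a\in S_n}\big\{\,x\in\mathrm{Per}_n(f):\ d_n(x,a)\leqslant\varepsilon\,\big\}.$$

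The first step is the key observation: if $x,x'\in\mathrm{Per}_n(f)$ satisfy $d_n(x,a)\leqslant\varepsilon$ and $d_n(x',a)\leqslant\varepsilon$ for the same $a$, then $x'\in B^f_{\infty,2\varepsilon}(x)$. Indeed, the triangle inequality gives $d(f^j(x),f^j(x'))\leqslant 2\varepsilon$ for $0\leqslant j\leqslant n-1$; writing an arbitrary $j\in\mathbb{A}$ as $j=qn+s$ with $0\leqslant s\leqslant n-1$ and using $f^n(x)=x$, $f^n(x')=x'$ (hence also $f^{-n}(x)=x$, $f^{-n}(x')=x'$ when $f$ is invertible) yields $f^j(x)=f^s(x)$ and $f^j(x')=f^s(x')$, so the estimate propagates to all $j\in\mathbb{A}$. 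Consequently, for each $a\in S_n$ whose fiber above is nonempty, pick one point $x_a$ in it; that fiber is then contained in $\mathrm{Per}_n(f)\cap B^f_{\infty,2\varepsilon}(x_a)$, so
$$\#\,\mathrm{Per}_n(f)\;\leqslant\;\sum_{a\in S_n}\#\big(\mathrm{Per}_n(f)\cap B^f_{\infty,2\varepsilon}(x_a)\big)\;\leqslant\;r_n(X,\varepsilon)\cdot\sup_{y\in X}\#\big(\mathrm{Per}_n(f)\cap B^f_{\infty,2\varepsilon}(y)\big).$$

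Finally I would take logarithms, divide by $n$ and pass to the $\limsup$. Using subadditivity of $\limsup$, that $\limsup_{n\to+\infty}\frac1n\log r_n(X,\varepsilon)=\overline{r}_n(X,\varepsilon)\leqslant h_{\mathrm{top}}(f)$ for every $\varepsilon>0$ (since $\overline{r}_n(X,\varepsilon)$ increases to $h_{\mathrm{top}}(f)$ as $\varepsilon\to0^+$), and that $\limsup_{n\to+\infty}\frac1n\sup_{y\in X}\log\#\big(\mathrm{Per}_n(f)\cap B^f_{\infty,2\varepsilon}(y)\big)=Per(f,2\varepsilon)$, one gets
$$\limsup_{n\to+\infty}\frac1n\log\#\,\mathrm{Per}_n(f)\;\leqslant\;h_{\mathrm{top}}(f)+Per(f,2\varepsilon)$$
for every $\varepsilon>0$; letting $\varepsilon\to0^+$ and recalling $Per^*(f)=\lim_{\varepsilon\to0^+}Per(f,\varepsilon)$ finishes the argument. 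The only genuinely delicate point is the propagation of the $d_n$-estimate to the full orbit — this is precisely where periodicity enters and why the radius doubles from $\varepsilon$ to $2\varepsilon$; the rest is bookkeeping, the interchange $\sup_y\log=\log\sup_y$ being legitimate because $\log$ is increasing and the sets involved are finite.
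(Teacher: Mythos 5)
Your argument is correct, and since the paper only quotes this lemma with a citation to Burguet, the relevant comparison is with the cited source: your proof is essentially the standard one given there — cover $X$ by minimal $(n,\varepsilon)$-spanning sets, observe that two $n$-periodic points in the same Bowen $(n,\varepsilon)$-ball are $2\varepsilon$-close along their entire orbits by periodicity, and conclude $\#\,\mathrm{Per}_n(f)\leqslant r_n(X,\varepsilon)\cdot\sup_y\#\big(\mathrm{Per}_n(f)\cap B^f_{\infty,2\varepsilon}(y)\big)$ before taking logarithms and limits. The key doubling-the-radius step and the monotonicity facts you invoke are all handled correctly, so nothing further is needed.
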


Thus, if $f$ is asymptotically per-expansive then $\limsup_{n \, \to \, +\infty}\,\frac{1}{n}\,
\log \,\#\,\mathrm{Per}_n(f)\leqslant h_{\mathrm{top}}(f)$, an inequality that generalizes \cite[Theorem 8.16]{W1981}.


\subsection{Symbolic extensions}\label{ss.symbolic-ext}
A map $f$ has a \emph{symbolic extension} if there exists $m \in \mathbb{N}$, a closed shift-invariant subset $\Sigma$ of the full shift $\{0, 1, \cdots, m\}^{\mathbb{Z}}$, and a continuous surjective map $\pi: \Sigma \to X$ such that $f \circ \pi = \pi \circ \sigma$, where $\sigma$ stands for the shift map.

A symbolic extension $(\Sigma, \sigma, \pi)$ is said to be \emph{principal} if $\pi$ preserves the metric entropy, that is, $h_{\eta}(\sigma) = h_{\mu}(f)$ for every $f$-invariant measure $\mu$ and every $\sigma$-invariant measure $\eta$ such that $\mu = \pi_*(\eta)$. If, in addition, there is a Borel measurable map $\tau: X \to \Sigma$ such that $\pi \circ \tau = \text{Identity}_X$, $\,\,\sigma \circ \tau = \tau \circ f$ and $\Sigma=\overline{\tau(X)}$, then $(\Sigma, \sigma, \pi, \tau)$ is called a \emph{symbolic extension with embedding}.

A symbolic extension $(\Sigma, \sigma, \pi)$ is \emph{strongly faithful} if the induced map 
$\pi_*: \mathscr{P} (\Sigma,\sigma)\,\to \,\mathscr{P}(X,f)$  is an homeomorphism and $\pi$ preserves periodic points, that is, for any $n \in \mathbb{N}$ we have $\pi(\mathrm{Per}_n(\sigma_{|\Sigma})) = \mathrm{Per}_n(f)$.

The existence of symbolic extensions seems to depend on hyperbolic-type properties of $f$ and its degree of differentiability. For instance, in the setting of $C^\infty$ diffeomorphisms on a compact Rimannian manifold, J. Buzzi established in \cite{Bz2005} that principal symbolic extensions always exist. On the other hand, D. Burguet proved in \cite{B2011} that, for $C^2$ diffeomorphisms on surfaces, symbolic extensions are sure to exist. On the contrary, T. Downarowicz and S. Newhouse proved in \cite{DN2005} that a generic $C^1$ area-preserving diffeomorphism of a compact surface either is Anosov or has no symbolic extension.

M. Boyle, D. Fiebig and U. Fiebig showed in \cite{BFF2002} that, if $f$ is entropy-expansive, then it has a principal symbolic extension. In addition, W. Cowieson and L.-S. Young proved in \cite{CY2005} that every partially hyperbolic $C^1$ diffeomorphism with a one-dimensional center bundle is entropy-expansive (see generalizations in~\cite{DF2011, DFPV2012}). Therefore, if $f$ is partially hyperbolic with a one-dimensional center bundle then a principal symbolic extension exists. In particular, every Shub's example in $\mathcal{U}$ has a principal symbolic extension. In addition, if we restrict to $\mathfrak{R}$, the diffeomorphisms are asymptotically per-expansive, and we may find a strongly faithtful extension with embedding.


For further use, we register that, according to \cite[Main Theorem]{B2017}, the following four conditions together are enough to guarantee that $f$ has a principal strongly faithful symbolic extension with embedding:
\begin{enumerate}
\item[(1)] $f$ is entropy-expansive.
\smallskip
\item[(2)] $f$ is asymptotically per-expansive.
\smallskip
\item[(3)] $\mathrm{Per}(f)$ is zero dimensional.
\smallskip
\item[(4)] There exists $K > 0$ such that
\begin{enumerate}
\item[(i)] $h_{\mathrm{top}}(f) < \log K$;
\smallskip
\item[(ii)] $\frac{1}{n}\,\log \#\, \mathrm{Per}_n(f)  \leqslant \log K$ for every $n \in \mathbb{N}$.
\end{enumerate}
\end{enumerate}

\subsection{Partial hyperbolicity}\label{ss:partial-hyp}
Assume in the following subsections that $X$ is a compact, connected Riemannian manifold. An $f$-invariant compact set $\Lambda \subset X$ is \emph{partially hyperbolic} if the tangent bundle on $\Lambda$ admits a $Df$-invariant splitting
$E^{\mathrm s}(f) \oplus E^{\mathrm c}(f) \oplus E^{\mathrm u}(f)$ such that $E^{\mathrm s}$ is uniformly contracted and $E^{\mathrm u}$ is uniformly expanded, and the possible contraction and expansion of $Df$ in $E^c(f)$ are weaker than those in the complementary bundles. More precisely, there exist constants $N \in \mathbb{N}$ and $\lambda > 1$ such that, for every $x \in \Lambda$ and every unit vector $v^\ast\in E^{\ast}(x,f)$, where $\ast=\mathrm s,\mathrm c, \mathrm u$, we have
\medskip
\begin{itemize}
\item[(a)] $\quad \lambda\, \|\,Df_x^N (v^{\mathrm s})\,\| <  \|\,Df_x^N (v^{\mathrm c})\,\| < \lambda^{-1}\,\|\,Df_x^N (v^{\mathrm u})\,\|$
\medskip
\item[(b)] $\quad \|\,Df_x^N (v^{\mathrm s})\,\| < \lambda^{-1} < \lambda < \|\,Df_x^N (v^{\mathrm u})\,\|$.
\medskip
\end{itemize}

In particular, an $f$-invariant compact set $\Lambda \subset X$ is said to be a \textit{partially hyperbolic attracting set} if there exists an open neighborhood $U$ of $\Lambda$ such that $\overline{f(U)} \subset U$ and
$$\Lambda=\bigcap_{n\,\in\, \mathbb{N}} \,f^n(U)$$
on which there exist a continuous $Df$-invariant splitting of the tangent bundle into a strong unstable sub-bundle $E^{\,\mathrm{uu}}$ and a center sub-bundle $E^{\,\mathrm c}$ dominated by $E^{\,\mathrm{uu}}$. More precisely,
$T_\Lambda X = E^{\,\mathrm{uu}} \oplus E^{\,\mathrm c}$ and
$$\|\,(Df\,|\,_{E^{\,\mathrm {uu}}})^{-1}\,\| < 1 \quad \quad \mbox{and} \quad \quad
\|\,Df\,|\,_{E^{\,\mathrm{c}}}\,\|\,\,\|\,(Df\,|\,_{E^{\,\mathrm {uu}}})^{-1}\,\| < 1.$$

Partial hyperbolicity is a robust property, and a partially hyperbolic diffeomorphism $f$ admits stable and unstable foliations, say $W^{\mathrm s}(f)$ and $W^{\mathrm u}(f)$, which are $f$-invariant and tangent to $E^{\mathrm s}(f)$ and $E^{\mathrm u}(f)$, respectively \cite{BDV2005}. However, the center bundle $E^{\mathrm c}(f)$ may not have a corresponding tangent foliation (cf. \cite{HHU2012}).
For a comprehensive exposition on partial hyperbolicity, we refer the reader to 
\cite{BDV2005}.

Suppose that $f$ has a partially hyperbolic attracting set. We say that $f$ is \emph{mostly contracting} if, from the point of view of the natural volume within the strong unstable leaves, the asymptotic forward behavior along the central direction is contracting: given any $uu$-dimensional disk $D$ inside a strong unstable leaf of $W^{\,\mathrm{uu}}$, there exists a positive volume measure subset $A \subset D$ whose points satisfy
$$\limsup_{n \, \to \, +\infty}\, \frac{1}{n}\, \log \|\, Df^n\,|\,_{E^{\,\mathrm c}(x)} \,\| < 0 \quad \quad \forall \,\,x \in A.$$
We note that, according to \cite{An2010}, the set of partially hyperbolic diffeomorphisms whose central direction is mostly contracting is open in the $C^k$ topology for any $k > 1$.

\subsection{Hyperbolic measures}\label{ss.hyperbolic-measures}
Given $x \in X$ and $v \in T_x X$, define the \textit{upper Lyapunov exponent of $v$ at $x$} by
\begin{equation*}\label{e.exp}
\lambda^+(x,v) \,\eqdef\,\, \limsup_{n\,\to\,+\infty} \,\,\frac1{n}\,\log \|\,D_xf^n(v)\,\|.
\end{equation*}
The function $\lambda^+: TX \,\to \,\mathbb{R}$ can only take a finite number $r(x)$ of different values on each space $T_x X$, say $\lambda_1(x)<\lambda_2(x)< \cdots<\lambda_{r(x)}(x)$, and associated to these there exist a filtration
$L_1(x) \subset L_2(x) \subset \cdots \subset  L_{r(x)}(x) = T_x X$ such that $\lambda^+(x,v) = \lambda_i(x)$ for every $x \in X$ and all $v \in L_i(x) \setminus L_{i-1}(x)$. Besides, the maps $\big(\lambda_i(x)\big)_{1 \,\leqslant \,i \,\leqslant \,r(x)}$ are measurable and $f$-invariant; their values are called the \textit{Lyapunov exponents of $f$ at $x$}. For each $1 \,\leqslant \,i \,\leqslant \,r(x)$ and $x \in X$, the number $k_i(x)=\dim(L_i(x))-\dim(L_{i-1}(x))$ is the multiplicity of the $i$-th exponent at $x$. Moreover, there exists a subset $\mathscr{O}(f) \subset X$ such that, if $x$ belongs to $\mathscr{O}(f)$, then the limit
$$\lim_{n\,\to\,+\infty} \,\frac1{n}\,\log\,\|\,D_xf^n(v)\,\|$$
exists for all $v\neq 0$. The elements in $\mathscr{O}(f)$ are called \textit{regular points}.  Oseledets' Theorem \cite{O1968} ensures that the set of regular points $\mathscr{O}(f)$ has full $\mu$ measure for any $\mu \in \mathscr{P}(X,f)$. If, in addition, $\mu$ is ergodic, then the functions $x \,\to\, \lambda_i(x)$ and $x \,\to\, r(x)$ are constant at $\mu$-almost everywhere. We denote these constants by $\lambda_1(\mu) < \cdots < \lambda_r(\mu)$. An ergodic probability measure $\mu$ is said to be \textit{hyperbolic} if $\lambda_i(\mu)\neq 0$ for every $i = 1,..., r$.


\subsection{SRB measures}\label{ss.SRB-measures}
Let $x \in X$ be a regular point and consider the sum (with multiplicity) of all the positive Lyapunov exponents at $x$
$$\chi^u(x) \,\eqdef \,\sum\limits_{\{i\,:\,\lambda_i(x)\,>\,0\}}\,k_i(x)\,\lambda_i(x).$$
Margulis-Ruelle inequality \cite{Ru1978} states that the metric entropy (definition in \cite[\S4]{W1981}) of every $\mu\in \mathscr{P}(X,f)$ is bounded above by the space average of $\chi^u$,
that is,
\begin{equation*}\label{e.Ruelle}
h_{\mu}(f) \leqslant \int\,\chi^u\,\,d\mu.
\end{equation*}
On the other hand, by Oseledets' Theorem one knows that, if $E^u(x)$ stands for the subspace of $T_x X$ corresponding to the positive Lyapunov exponents at the regular point $x \in X$ and we denote by $J^u(x)$ the Jacobian of $Df$ restricted to the subspace $E^u(x)$, then
$$\chi^u(x) = \lim_{n\,\to\,+\infty}\,\frac1{n}\,\sum\limits^{n-1}_{i=0}\,\log|\,J^u\big(f^i(x)\big)\,|.$$
Thus, for every Borel $f$-invariant probability measure $\mu$ one has
\begin{equation}\label{e.SRB}
h_{\mu}(f) \leqslant \int\,\log|\,J^u\,|\,d\mu.
\end{equation}
A probability measure $\mu$ attaining the equality in \eqref{e.SRB} is called an \textit{SRB measure}. Pesin proved in \cite{Pes1977} that if $\mu$ is equivalent to Lebesgue measure (the Riemannian volume) then $\mu$ is an SRB measure. Afterwards, Ledrappier and L.-S. Young identified all the measures satisfying Pesin's entropy formula, establishing in \cite{LY1985} that the equality \eqref{e.SRB} holds if and only if the conditional measures of $\mu$ on unstable manifolds are absolutely continuous with respect to Lebesgue measure.

\subsection{Physical measures}\label{ss.physical-measures}
Let $\mu$ be a Borel $f$-invariant probability measure on $X$. A point $x \in X$ is called \emph{$\mu$-generic} if
\begin{equation*}\label{e.fis}
\lim_{n\,\to\,+\infty}\,\frac1{n}\,\sum^{n-1}_{i=0}\,\varphi(f^i(x)) = \int\,\varphi\,d\mu \qquad \forall \,\,\varphi \in C^0(X,\mathbb{R})
\end{equation*}
where $C^0(X,\mathbb{R})$ stands for the space of continuous maps $\varphi: X \,\to\, \mathbb{R}$ with the uniform norm. We will denote by $\mathfrak{B}(\mu)$ the set of $\mu$-generic points, also called the \textit{basin of attraction of} $\mu$. The measure $\mu$ is called \textit{physical} if $\mathfrak{B}(\mu)$ has positive Lebesgue measure. Note that, if
the basin of $\mu$ has full Lebesgue measure, then $\mu$ is the unique physical measure of $f$.

For Anosov diffeomorphisms and, more generally, $C^2$ Axiom A attractors, there exists a unique invariant probability measure $\mu$ which is characterized by each of the following properties, equivalent to one another (cf.  \cite{B1975}):
\begin{enumerate}
\item[(1)] Equality \eqref{e.SRB} holds (that is, $\mu$ is SRB).
\smallskip
\item[(2)] The conditional measures on unstable manifolds of $\mu$ are absolutely continuous with respect to Lebesgue measure.
 \smallskip
\item[(3)] Lebesgue almost every point in a neighborhood of the attractor is generic with respect to $\mu$ (that is, $\mu$ is physical).
\end{enumerate}

\section{The setting}\label{se.Shub-examples}

We now review the construction of Shub's examples \cite{S1971} with the additional constraints imposed in \cite{NY1983}. Let $\Phi \colon  \mathbb{T}^2  \to \mathbb{T}^2 $ be an Anosov diffeomorphism and $\mathrm{T}\,\mathbb{T}^2  = E^{\mathrm{ss}} \oplus E^{\mathrm{uu}}$ be its hyperbolic splitting satisfying, for some uniform constant $0 < \gamma < 1$,
$$\max\,\left\{\|\,D\Phi|_{E^{\mathrm{ss}}}\,\|, \quad \|\,D\Phi^{-1}|_{E^{\mathrm{uu}}}\,\|\right\} < \gamma.$$
Assume that $\Phi$ has two fixed points $p$ and $q$. Note that they are homoclinically related (that is, both intersections $W^{\mathrm s}(p)\pitchfork W^{\mathrm u}(q)$ and $W^{\mathrm u}(p)\pitchfork W^{\mathrm s}(q)$ are transversal and non-empty). Afterwards, take a smooth family of torus $C^1$-diffeomorphisms $f_x \colon \,\mathbb{T}^2 \,\to\, \mathbb{T}^2$ indexed by $x \in \mathbb{T}^2$ with the following properties:
\smallskip
\begin{itemize}
\item[(P1)] At each $x \in \mathbb{T}^2$, the tangent space at $x$ admits a splitting
$\mathrm{T}\,\mathbb{T}^2_x = E^{\mathrm{c}}(f_x) \oplus E^{\mathrm{u}}(f_x)$
invariant under $Df_x$ and for which there exist constants $0 < \gamma_1 < \gamma^{-1}_2<1$ such that
$$\|\,D{f_x^{-1}}|_{E^{\mathrm{u}}(f_x)}\,\|  \leqslant   \gamma_1  \quad \quad \text{and} \quad \quad   \gamma_1   \leqslant
\|\,D f_x|_{E^{\mathrm{c}}(f_x)} \,\| \leqslant \,\gamma_2.$$
We may assume, taking a power of $\Phi$ if necessary, that $\gamma < \gamma_1$.
\medskip
\item[(P2)] For every $x \in \mathbb{T}^2$, the diffeomorphism $f_x$ preserves cone fields $\mathcal{C}^\mathrm{cs}$ and $\mathcal{C}^\mathrm{u}$.
\medskip
\item[(P3)] The map $f_p$ is Anosov, while $f_q$ is a Derived from Anosov.
\medskip
\item[(P4)] There is $\theta_0 \in \mathbb{T}^2$ such that $f_x(\theta_0) = \theta_0$ for every $x$, and $\theta_0$ is a saddle of $f_p$ and a source for $f_q$.
\medskip
\end{itemize}
Shub's examples are precisely the skew products induced by the action of the diffeomorphisms $(f_x)_{x \, \in \, \mathbb{T}^2}$, namely
$F_S \colon \mathbb{T}^2 \times \mathbb{T}^2  \to \mathbb{T}^2\times \mathbb{T}^2 $ defined by
\begin{eqnarray}\label{eq:skew-product}
F_S(x,y)=\big(\Phi(x), \, f_x(y)\big) .
\end{eqnarray}
It is not difficult to check that
$\Omega(F_S) = \mathbb{T}^2 \times \mathbb{T}^2$ and that $F_S$ is partially hyperbolic with a one-dimensional center bundle and a splitting
$$\mathrm{T}_{(x,y)}\big(\mathbb{T}^2\times \mathbb{T}^2\big) =E^{\mathrm{ss}}(x,y)\oplus E^{\mathrm{c}}(x,y)\oplus E^{\mathrm{u}}(x,y)\oplus E^{\mathrm{uu}}(x,y).$$
In what follows we denote by $\mathcal{W}^*$ the invariant foliation tangent to $E^*$, with $*=$ ss, c, u, uu.

\subsection{Additional assumptions}\label{sse.assumptions}

The selection of the family $x \in \mathbb{T}^2\to f_x\in \mathrm{Diff}^1(\mathbb{T}^2)$ in the construction of $F_S$ is not unique. In this work we will add the conditions (A1)-(A3) below, as done in \cite{NY1983}:
\begin{itemize}
\item[(A1)] The map  $x \in \mathbb{T}^2 \to f_x \in \mathrm{Diff}^1(\mathbb{T}^2)$ is continuous.
\medskip
\item[(A2)] $F_S$ is homotopic to $\Phi \times L$ as a bundle map (that is, the homotopic path is made of skew-products with fixed base $\Phi$), where $L: \mathbb{T}^2\to  \mathbb{T}^2$ is a hyperbolic toral automorphism.
\medskip
\item[(A3)] Each $f_x$ preserves the stable foliation $\mathcal{F}$ of $L$ which is tangent to the central direction $E^{\mathrm c}$. We also require that this foliation is normally expanded, meaning that
    $$\inf_{(x,y)\,\in\, \mathbb{T}^2\times\mathbb{T}^2}\,\|\,D_yf_x\,|\,_{E^{\mathrm u}(x,y)}\,\|\,> \,\max  \,\Big\lbrace 1,\sup_{(x,y)\,\in\, \mathbb{T}^2\times\mathbb{T}^2}\,\|\,D_yf_x\,|\,_{E^{\mathrm c}(x,y)}\,\|\Big\rbrace$$
\end{itemize}

\subsection{Construction of Shub's examples}\label{sse.construction}
Although Shub's examples can be constructed quite generally, the natural way to obtain them is through a small $C^0$-perturbation supported on a small neighborhood of a fixed point of $\Phi\times L$. More precisely, suppose that
$0 <\lambda_s <1$ and $\lambda_u =\lambda_s^{-1}>1$
are the eigenvalues associated to the unstable and stable eigenvectors $\textbf{v}^u$ and $\textbf{v}^s$ of the matrix $L$. Let $\theta_0\in \mathbb{T}^2$ be a fixed point of the induced map by $L$ (which we still denote by $L$ if no confusion arises) corresponding to $\textbf{0}$ in $\mathbb{R}^2$. In a relatively small neighborhood $W\, \eqdef\, W_1\times W_2$ of $(q,\theta_0)$ we use coordinates $u_1\textbf{v}^u +u_2\textbf{v}^s$ in each fiber $\{w\}\times W_2$, where $w \in W_1$. Let $\varrho > 0$ be small enough so that the ball $B_{\varrho}(q,\theta_0)=B_{\varrho}(q)\times B_{\varrho}(\theta_0)$ of radius $\varrho$ centered at $(q,\theta_0)$ is contained in $W$. Take a bump function $\delta: \mathbb{T}^2\times\mathbb{T}^2\to \mathbb{R}$ defined by $\delta(x,y)\eqdef b(x)\,b(y)$, where $b: \mathbb{T}^2\to \mathbb{R}$ is a bump function satisfying
$0 \leqslant b(x) \leqslant 1$ for every $x \in \mathbb{T}^2$, $\,b(x)=1$ if $|x|<\varrho/2$ and $b(x)=0$ if $|x|>\varrho$. Afterwards consider the system of differential equations in $\mathbb{T}^2\times \mathbb{T}^2$ given by
\begin{equation}\label{e.EDO}
\left\{\begin{array}{lr}
\dot{w}=0 \quad \mbox{in}\quad \mathbb{T}^2  & \smallskip\\
(\dot{u}_1,\,\dot{u}_2) = \big(0, \,\,u_2\,\delta \,(|w-q|,\,|(u_1,u_2)|)\big) \quad \mbox{in}\quad \mathbb{T}^2  &
\end{array}
\right.
\end{equation}
Let $\varphi^t$ be the flow of the differential equation \eqref{e.EDO}, that is,
\begin{equation*}
\varphi^t(w,\,(u_1,u_2)) = \big(w,\psi^t_w(u_1,u_2)\big)\qquad\mbox{where}\qquad
\psi^t_w(u_1,u_2) = \big(u_1,\psi^t_{w,\,2}(u_1,u_2)\big).
\end{equation*}
Then the support of $\varphi^t - id$ is contained in $W$. Moreover, the derivative of the flow at $(w,\theta_0)$ in terms of the $(w,u_1,u_2)$-coordinates is given by
\begin{equation*}
D_{(w,\,\theta_0)}\,\varphi^t=
\begin{pmatrix}
\textbf{1} &  \textbf{0}\\
\textbf{0} & D_{\theta_0}\,\psi^t_w
\end{pmatrix}
\qquad\mbox{where}
\qquad
D_{\theta_0}\,\psi^t_w=
\begin{pmatrix}
1 &  0\\
0 & e^{t\,b(|w-q|)}
\end{pmatrix}
\end{equation*}
where the bold numbers $\textbf{0}$ and  $\textbf{1}$ stand for the null $2\times 2$ matrix and the $2\times 2$ identity matrix, respectively.

Finally, fix $T>0$ such that $1 <\lambda_s\, e^{T} < \lambda_u$ and define $F_S: \mathbb{T}^2\times \mathbb{T}^2\to \mathbb{T}^2\times \mathbb{T}^2$ by
$$F_S \,\eqdef\, \varphi^T \circ (\Phi\times L).$$
This way the derivative of $F_S$ at $(q,\theta_0)$ in the $(w,u_1,u_2)$ coordinate system is precisely
$$D_{(q,\,\theta_0)}\,F_S = D_{(q,\,\theta_0)}\,\varphi^T\,\, D_{(q,\,\theta_0)}(\Phi \times L)=
\begin{pmatrix}
\textbf{1} &  \textbf{0}\\
\textbf{0} & D_{\theta_0}\,\psi^T_q
\end{pmatrix}
\begin{pmatrix}
D_q\Phi &  \textbf{0}\\
\textbf{0} & L
\end{pmatrix}
=
\begin{pmatrix}
D_q\Phi &  \textbf{0}\\
\textbf{0} & D_{\theta_0}\,\psi^T_q\, L
\end{pmatrix}
$$
where
$$D_{\theta_0}\,\psi^T_q\, L =
\begin{pmatrix}
\lambda_u &  0\\
0 & \lambda_s\, e^{T}
\end{pmatrix}.$$
Therefore, $(q,\,\theta_0)$ is a fixed point of unstable index $3$, while $(p,\,\theta_0)$ has unstable index $2$. Furthermore, the one-parameter family $(f_x)_{x \, \in \, \mathbb{T}^2}$ is given by
$$f_x = \psi^{T}_{\Phi(x)}\circ L.$$


\begin{figure}
\centering
\begin{overpic}[scale=.12,
  ]{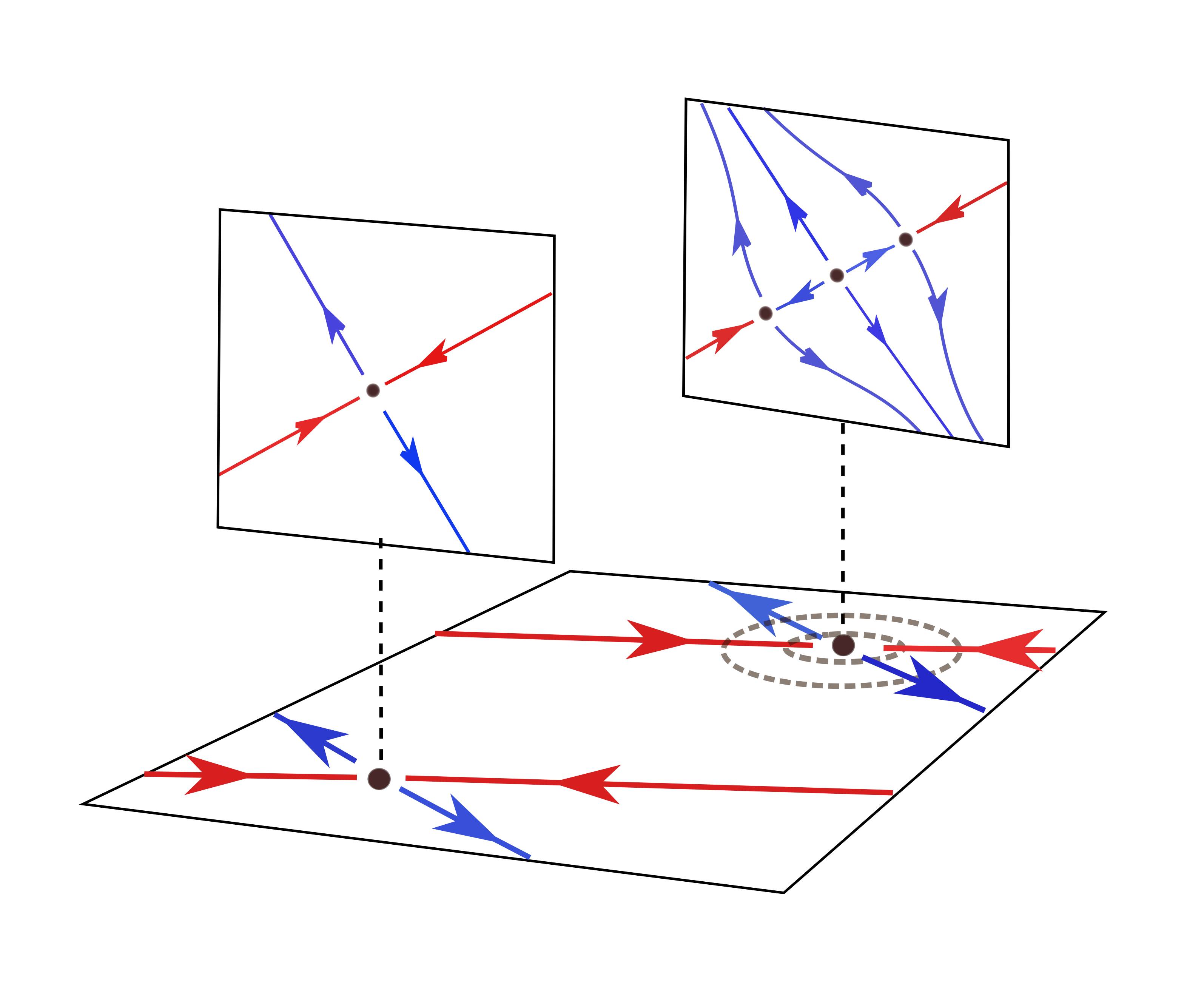}
            \put(26,68){\large{Anosov}}
           \put(31,53){\large{$\theta$}}
            \put(34,20){\LARGE{$p$}}
     \put(67,23){\LARGE{$q$}}
           \put(69,61){\large{$\theta$}}
            \put(55,77){\large{Derived from Anosov}}
    \end{overpic}
\caption{Homotopic deformation from $\Phi\times L$ to $F_S$}
\label{fig:Sh}
\end{figure}

\begin{rem}\label{re.Kupka-Smale}
The previous construction provides an open set $\mathcal{U}$ of $C^r$ diffeomorphisms, $r > 1$, with the properties (P1)-(P4) and (A1)-(A3) listed above. Indeed, the conditions (P1)-(P4) are valid for $\gamma_1:= \lambda_s$ and
$\gamma_2 := e^{T}\,\lambda_s$;
and (A1)-(A3) are obtained by construction.
\end{rem}

\section{Properties of $F_S \in \mathcal{U}$}\label{se.properties}

The selection of the family $x \in \mathbb{T}^2\to f_x\in \mathrm{Diff}(\mathbb{T}^2)$ in the previous construction of $F_S$ induce several dynamical and ergodic features in $F_S$ we will now list.

\subsection{Semi-conjugation with an Anosov diffeomorphism}\label{sse.semi-conjugation}

Under the previous assumptions, it was shown in \cite{NY1983} the existence of a continuous surjective map $H: \mathbb{T}^2 \times \mathbb{T}^2 \, \to \mathbb{T}^2 \times \mathbb{T}^2$ such that
\begin{equation}\label{e.rubik}
H\circ F_S = (\Phi \times L)\circ H.
\end{equation}
Moreover, the semi-conjugation $H$ is a skew product as well, that is,
$$H(x,y)=(x,\,h_x(y))$$
where each $h_x :\{x\}\times \mathbb{T}^2\to \{x\}\times \mathbb{T}^2$ is homotopic to the identity 
and satisfies
\begin{equation}\label{e.semi}
h_{\Phi(x)} \circ f_x= L\circ h_x, \quad\quad \forall \, x \in \mathbb{T}^2.
\end{equation}
The semi-conjugation $H$ can be seen as the result of a parameterized version of a theorem due to Franks \cite{Fr}.

In \cite{NY1983}, Newhouse and L.-S. Young also established the existence of a unique probability measure $\mu_{\mathrm{max}}$ of maximal entropy for $F_S$, and proved that $H_*(\mu_{\mathrm{max}})=\nu_{\mathrm{max}}$, where $\nu_{\mathrm{max}}$ stands for the probability measure of maximal entropy of $\Phi\times L$. Moreover, the pairs $(F_S,\mu_{\mathrm{max}})$ and $(\Phi \times L,\nu_{\mathrm{max}})$ are almost conjugate. More precisely, there exists a set $B \times \mathbb{T}^2$ contained in the set of injectivity points of $H$, say
$$\mathcal{A} \,\eqdef\,\Big\{\,\, (x,y)\in \mathbb{T}^2\times \mathbb{T}^2 \,\,  \colon \,\,  \#\,H^{-1}(x,y) = 1 \,\,  \Big\}$$
and such that
\begin{itemize}
\item $\mu_{\mathrm{max}}(B\times \mathbb{T}^2)=\nu_{\mathrm{max}}(B\times \mathbb{T}^2)= 1$;
\smallskip
\item $H:B\times \mathbb{T}^2\to B\times \mathbb{T}^2$ is a conjugation between the restrictions $F_S|_{B\times \mathbb{T}^2}$ and $(\Phi \times L)|_{B\times \mathbb{T}^2}$.
\end{itemize}
Actually, the set $B\times \mathbb{T}^2$ is contained in
$$\mathcal{E}\,\eqdef\, \Big\{ \,\,(x,y)\in \mathbb{T}^2\times \mathbb{T}^2 \,\, \colon \,\,  \lambda^c_+(F_S)(x,y) < 0\,\, \Big\} \subset \mathcal{A}$$
(cf. \cite{NY1983}), where $\lambda^c_+(F_S)$ stands for the upper Lyapunov exponent of $F_S$ along to the one-dimension central direction $E^{\mathrm{c}}(F_S)$.

The following proposition lists some useful consequences from the existence of the semi-conjugation $H$ with the previous properties.

\begin{prop}\cite[Theorems 1 \& 2]{NY1983}\label{prop.Shub-NY} $\,$
\begin{enumerate}
\item[(a)] $h_{\mathrm{top}}(F_S) = h_{\mathrm{top}}(\Phi\times L) =  h_{\mathrm{top}}(\Phi) + h_{\mathrm{top}}(L)$.
\medskip
\item[(b)] $h_{\mathrm{top}}\big(H^{-1}(x,y)\big)=0, \quad \quad \forall \,\,(x,y)\,\in\,\mathbb{T}^2\times \mathbb{T}^2$.
\medskip
\item[(c)] $\mu_{\mathrm{max}}\big(\, \mathcal{E}\,\big)=\nu_{\mathrm{max}}\big(\, \mathcal{E}\,\big)=1$.
\end{enumerate}
\end{prop}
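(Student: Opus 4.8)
The plan is to extract the three items from the semi-conjugation $H$ and its properties recalled above, proving (b) first since (a) rests on it.

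\emph{Item (b).} I would fix $(x,y)\in\mathbb{T}^2\times\mathbb{T}^2$ and set $\Gamma\eqdef H^{-1}(x,y)$. From $H(x,y)=(x,h_x(y))$ and $H\circ F_S=(\Phi\times L)\circ H$ one gets $\Gamma=\{x\}\times h_x^{-1}(\{y\})$ and $F_S^{\,n}(\Gamma)\subseteq H^{-1}\big(\Phi^n(x),L^n(y)\big)$ for every $n\in\mathbb{N}$, so every forward iterate of $\Gamma$ is contained in a fiber of $H$. The first step is to invoke, from the parameterized version of Franks' theorem used in \cite{NY1983} (see also \cite{Fr}), the shape of these fibers: since each $f_x$ preserves the one-dimensional foliation $\mathcal{F}$ tangent to $E^{\mathrm c}$ and maps unstable leaves of $f_x$ onto unstable leaves of $L$, every fiber $h_x^{-1}(\{z\})$ is an arc contained in a leaf of $\mathcal{F}$, of length at most a constant $C>0$ independent of $x$ and $z$. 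Identifying $\Gamma$ with a subinterval of a leaf, the relation \eqref{e.semi} shows that $F_S^{\,n}$ acts on $\Gamma$ through the homeomorphism $\phi_n\eqdef f_{\Phi^{n-1}(x)}\circ\cdots\circ f_x$, with image an arc of length $\le C$. The second step is a counting estimate: the common refinement over $0\le j<n$ of the partitions of $\Gamma$ obtained by subdividing so that $\phi_j$ sends each piece to an arc of length $\le\varepsilon$ has at most $n\lceil C/\varepsilon\rceil$ atoms, since each of these $n$ partitions of an interval has at most $\lceil C/\varepsilon\rceil$ atoms; choosing one point in each atom produces an $(n,\varepsilon)$-spanning set of $\Gamma$, so $r_n(\Gamma,\varepsilon)\le n\lceil C/\varepsilon\rceil$ and $\overline{h}_{\mathrm{top}}(F_S,\Gamma)=\lim_{\varepsilon\to0^+}\limsup_{n\to+\infty}\frac{1}{n}\log\big(n\lceil C/\varepsilon\rceil\big)=0$. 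I expect the first step to be the main obstacle: it is where one must control the Franks fibers — their connectedness, uniform boundedness and tangency to $\mathcal{F}$ — using the cone-field invariance (P2), the normal expansion (A3) and the homotopy (A2) built into the family $(f_x)$.

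\emph{Item (a).} Since $H$ is a continuous surjection with $H\circ F_S=(\Phi\times L)\circ H$, the system $\Phi\times L$ is a topological factor of $F_S$, whence $h_{\mathrm{top}}(F_S)\ge h_{\mathrm{top}}(\Phi\times L)$. Conversely, Bowen's inequality bounding the topological entropy of a system by that of a factor plus the supremum of the fibre entropies, together with (b), gives $h_{\mathrm{top}}(F_S)\le h_{\mathrm{top}}(\Phi\times L)+\sup_{(x,y)}h_{\mathrm{top}}\big(F_S,H^{-1}(x,y)\big)=h_{\mathrm{top}}(\Phi\times L)$. Finally, $h_{\mathrm{top}}(\Phi\times L)=h_{\mathrm{top}}(\Phi)+h_{\mathrm{top}}(L)$ is the additivity of topological entropy under Cartesian products \cite{W1981}.

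\emph{Item (c).} This is immediate from the properties of the almost-conjugation recalled before the statement: $B\times\mathbb{T}^2\subseteq\mathcal{E}$ and $\mu_{\mathrm{max}}(B\times\mathbb{T}^2)=\nu_{\mathrm{max}}(B\times\mathbb{T}^2)=1$, hence $\mu_{\mathrm{max}}(\mathcal{E})=\nu_{\mathrm{max}}(\mathcal{E})=1$. The underlying assertions are part of \cite[Theorems 1 \& 2]{NY1983}, obtained by disintegrating $\mu_{\mathrm{max}}$ over $\nu_{\mathrm{max}}$ along the zero-entropy fibers of $H$ and using ergodicity to conclude that $\nu_{\mathrm{max}}$-almost every fiber is a single point lying in $\mathcal{E}$.
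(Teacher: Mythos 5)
The paper does not prove this proposition at all: it is imported verbatim from \cite{NY1983} (Theorems 1 and 2), which is also the source of the facts about $H$, $B\times\mathbb{T}^2$ and $\mathcal{E}$ recalled in Section~\ref{sse.semi-conjugation}. Your reconstruction is essentially the Newhouse--Young argument and is sound as an outline. For (b), once one knows that each fibre $h_x^{-1}(z)$ is contained in a leaf of $\mathcal{F}$, is an arc of uniformly bounded length, and is carried into fibres by the iterates, your counting is correct: the join of $n$ partitions of an interval, each with at most $\lceil C/\varepsilon\rceil$ subintervals, has at most $n\big(\lceil C/\varepsilon\rceil-1\big)+1$ atoms, so $r_n\big(H^{-1}(x,y),\varepsilon\big)$ grows linearly in $n$ and the fibre entropy vanishes (connectedness is in fact not even needed, containment in a bounded arc of a leaf suffices). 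For (a), the factor inequality plus Bowen's bound of the entropy of an extension by the entropy of the factor plus the supremal fibre entropy, together with additivity of entropy for products \cite{W1981}, gives the claim. Item (c), as you note, is a restatement of the almost-conjugacy facts ($B\times\mathbb{T}^2\subset\mathcal{E}$ with $\mu_{\mathrm{max}}(B\times\mathbb{T}^2)=\nu_{\mathrm{max}}(B\times\mathbb{T}^2)=1$), which is exactly how the paper treats it. The caveat is that the nontrivial content of your ``first step'' --- the properness of $\widetilde H$, its bounded distance from the identity (the parameterized Franks theorem), and the uniform bound on the fibres inside leaves of $\mathcal{F}$ --- is precisely what the citation to \cite{NY1983} is carrying; the present paper only establishes the related arc/connectedness statement later and for different purposes (Claim~\ref{cl.quasiquasi} and Lemma~\ref{l.p-classes}). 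So your text should be read as a faithful sketch of the cited proof rather than an independent argument; within that reading it has no gaps.
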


\medskip

We observe that $\nu_{\mathrm{max}}$ is a product measure and that, when $\Phi$ is a linear hyperbolic automorphism, $\nu_{\mathrm{max}}$ is Lebesgue measure on $\mathbb{T}^2\times\mathbb{T}^2$. In this case, Proposition~\ref{prop.Shub-NY} (c) indicates that $F_S$ is mostly contracting along the central direction with respect to the splitting
$$\mathbb{E}^{\,\mathrm{c}} = E^{\,\mathrm{ss}} \oplus E^{\,\mathrm{c}} \quad \text{and} \quad \mathbb{E}^{\,\mathrm{u}} = E^{\,\mathrm{u}}\oplus E^{\,\mathrm{uu}}$$
since Lebesgue almost every point in $\mathbb{T}^2\times \mathbb{T}^2$ has negative central Lyapunov exponent.

Taking into account that $\mathcal{E} \subset \mathcal{A}$, the previous properties (b) and (c) of Proposition~\ref{prop.Shub-NY} allow us to apply \cite[Theorem 1.5]{BFSV2012} to $F_S$, and thereby show that $\mu_{\mathrm{max}}$ describes the \textit{distribution of periodic classes} of $F_S$. More precisely, consider the equivalence relation on the set $\mathbb{T}^2\times \mathbb{T}^2$
$$(x,y) \sim (x_0, y_0) \quad \quad \Leftrightarrow \quad \quad H(x,y) = H(x_0,y_0)$$
so the elements in the class $[(x,y)]$ are the ones in $H^{-1}(H(x,y))$. The class $[(x,y)]$ is said to be $n$-periodic if $H(x,y)$ belongs to $\mathrm{Per}_n(\Phi \times L)$. Denote by $\widetilde{\mathrm{Per}_n}(F_S)$ the set of periodic classes with period $n$. Then $\mu_{\mathrm{max}}$ describes the distribution of periodic classes of $F_S$ if $\mu_{\mathrm{max}}$ is the weak$^*$ limit of the sequence of measures
\begin{equation*}\label{eq:zeta}
n \in \mathbb{N} \quad \mapsto \quad \zeta_n \,\eqdef\, \frac1{\#\,\widetilde{\mathrm{Per}}_n(F_S)}
 \sum_{[(x,y)]\,\in\, \widetilde{\mathrm{Per}}_n(F_S)}\,\delta_{[(x,y)]}
 \end{equation*}
where $\delta_{[(x,y)]}$ is any $F_S^n$-invariant probability measure supported on the class $[(x,y)]$. We may wonder whether the elements of a periodic class are $F_S$-periodic or if the class contains a periodic point by $F_S$. We will answer to this question on Section~\ref{se.proof-Theorem-A}, where we will also establish that our assumptions about $F_S$ enable us to improve \cite[Theorem 1.5]{BFSV2012}, showing that the measure $\mu_{\mathrm{max}}$ describes the distribution of the \textit{periodic points} of $F_S$ (cf. Subsection~\ref{prop.weak-limit}).

\subsection{Hyperbolicity of $\mu_{\mathrm{max}}$}\label{sse.hyperbolicity}

A direct consequence of the construction of $F_S$ is the fact that the measure $\mu_{\mathrm{max}}$ exhibits four Lyapunov exponents, namely
$$\lambda^{ss}(\mu_{\mathrm{max}}) < \lambda^{c}(\mu_{\mathrm{max}}) < \lambda^{u}(\mu_{\mathrm{max}}) < \lambda^{uu}(\mu_{\mathrm{max}})$$
which are constant $\mu_{\mathrm{max}}$ almost everywhere (since $\mu_{\mathrm{max}}$ is ergodic) and satisfy
$$\lambda^{ss}(\mu_{\mathrm{max}}) < 0 < \lambda^u(\mu_{\mathrm{max}}) \quad \quad \text{and} \quad \quad 
\log(\gamma_1) \leqslant \lambda^{c}(\mu_{\mathrm{max}}) \leqslant  \log (\gamma_2).$$
Therefore:

\begin{prop}\label{prop.minimal} $\,$
\begin{enumerate}
\item[(a)] $\mu_{\mathrm{max}}$ is hyperbolic.
\medskip
\item[(b)] The unstable foliation of $F_S$ tangent to the bundle $E^{\mathrm{u}}\oplus E^{\mathrm{uu}}$  is robustly minimal.
\medskip
\item[(c)] If $\Phi$ is a linear hyperbolic automorphism of the $2$-torus, the partially hyperbolic attractor $\mathbb{T}^2\times \mathbb{T}^2$ of $F_S$ admits a unique SRB measure, say $\mu_{\mathrm{SRB}}$, whose basin has full Lebesgue measure. So $\mu_{\mathrm{SRB}}$ is the unique physical measure of $F_S$.
\end{enumerate}
\end{prop}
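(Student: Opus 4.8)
I would establish the three items separately.

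\smallskip
\noindent\emph{Part (a).} This I would deduce directly from Proposition~\ref{prop.Shub-NY}(c). Recall that $\mu_{\mathrm{max}}$ is ergodic and carries the four exponents $\lambda^{ss}(\mu_{\mathrm{max}})<\lambda^{c}(\mu_{\mathrm{max}})<\lambda^{u}(\mu_{\mathrm{max}})<\lambda^{uu}(\mu_{\mathrm{max}})$ recorded above. Partial hyperbolicity of $F_S$ gives $\lambda^{ss}(\mu_{\mathrm{max}})<0<\lambda^{u}(\mu_{\mathrm{max}})<\lambda^{uu}(\mu_{\mathrm{max}})$ for free, so only the sign of the one-dimensional central exponent needs attention; since $\mu_{\mathrm{max}}(\mathcal{E})=1$, on the full-measure set $\mathcal{E}\cap\mathscr{O}(F_S)$ the central exponent equals $\lambda^{c}_{+}(F_S)<0$, hence by ergodicity $\lambda^{c}(\mu_{\mathrm{max}})<0$, and therefore no exponent vanishes.

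\smallskip
\noindent\emph{Part (b).} Write $\mathcal{W}$ for the two-dimensional foliation tangent to $E^{\mathrm u}\oplus E^{\mathrm{uu}}$ (i.e. the unstable foliation of $F_S$ with center-stable bundle $E^{\mathrm{ss}}\oplus E^{\mathrm c}$) and $\mathcal{W}^{\mathrm{uu}}_\Phi$ for the unstable foliation of the Anosov base map $\Phi$ on $\mathbb{T}^2$. The plan is to prove $\overline{\mathcal{W}(z)}=\mathbb{T}^2\times\mathbb{T}^2$ for every $z$, in four steps. First, $K:=\overline{\mathcal{W}(z)}$ is closed, $F_S$-invariant and $\mathcal{W}$-saturated. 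Second, since $F_S$ is a skew product over $\Phi$, with $E^{\mathrm u}$ tangent to the fibers and $E^{\mathrm{uu}}$ projecting isomorphically onto the unstable direction of $\Phi$, every $\mathcal{W}$-leaf projects onto a full $\mathcal{W}^{\mathrm{uu}}_\Phi$-leaf; hence the first-coordinate projection $\pi_1(K)$ is closed, $\Phi$-invariant and $\mathcal{W}^{\mathrm{uu}}_\Phi$-saturated, and by the classical minimality of the unstable foliation of an Anosov diffeomorphism of $\mathbb{T}^2$ we get $\pi_1(K)=\mathbb{T}^2$. Third, fixing a $\Phi$-fixed point $p$ over which $f_p$ is Anosov (property (P3)), the slice $K_p:=K\cap(\{p\}\times\mathbb{T}^2)$ is nonempty, closed, and saturated by the fiber unstable foliation $\mathcal{W}^{\mathrm u}(f_p)$ --- using that the intersection of a $\mathcal{W}$-leaf with the fiber $\{p\}\times\mathbb{T}^2$ is a full $\mathcal{W}^{\mathrm u}(f_p)$-leaf, by the $F_S$-invariance of $\mathcal{W}$ and the uniform expansion along $E^{\mathrm u}$ --- so minimality of the unstable foliation of the Anosov map $f_p$ forces $\{p\}\times\mathbb{T}^2\subset K$. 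Fourth, for any $x'\in\mathcal{W}^{\mathrm{uu}}_\Phi(p)$ and any $w$, the leaf $\mathcal{W}(x',w)$ projects onto $\mathcal{W}^{\mathrm{uu}}_\Phi(x')=\mathcal{W}^{\mathrm{uu}}_\Phi(p)\ni p$, hence meets $\{p\}\times\mathbb{T}^2\subset K$, so $(x',w)\in K$ by $\mathcal{W}$-saturation and closedness; thus $\mathcal{W}^{\mathrm{uu}}_\Phi(p)\times\mathbb{T}^2\subset K$ and, $\mathcal{W}^{\mathrm{uu}}_\Phi(p)$ being dense, $K=\mathbb{T}^2\times\mathbb{T}^2$.

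\smallskip
\noindent For the robustness I would note that each ingredient is $C^1$-open: partial hyperbolicity and the continuous dependence of $\mathcal{W}$, the persistence of an Anosov base with minimal unstable foliation, the hyperbolic continuation of $p$ with a dense base unstable leaf, and the persistence of the Anosov (or, via the fiber over $q$, the Derived-from-Anosov) fiber dynamics with minimal unstable foliation. I expect this robustness step to be the \emph{main obstacle}: a $C^1$-perturbation of $F_S$ need not be a skew product, so the fiberwise arguments of the third and fourth steps have to be replaced by quantitative versions --- a uniform density modulus for the base unstable leaves, uniform expansion/domination, and persistence of the homoclinic relations of the continuations of $(p,\theta_0)$ and $(q,\theta_0)$ --- all of which are manifestly $C^1$-robust; alternatively one may invoke a known $C^1$-open criterion for minimality of strong unstable foliations (in the spirit of \cite{BDV2005}).

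\smallskip
\noindent\emph{Part (c).} Assume $\Phi$ linear. Then $\nu_{\mathrm{max}}$ is the product of the Haar measures, i.e. Lebesgue measure on $\mathbb{T}^2\times\mathbb{T}^2$, so Proposition~\ref{prop.Shub-NY}(c) yields $\mathrm{Leb}(\mathcal{E})=1$. By absolute continuity of $\mathcal{W}$, the central exponent is negative Lebesgue-a.e. on Lebesgue-a.e. $\mathcal{W}$-disk; invoking the $F_S$-invariance of $\mathcal{E}$, bounded distortion along $\mathcal{W}$ and the uniform density of $\mathcal{W}$-leaves from part (b), this propagates to every $\mathcal{W}$-disk, so $F_S$ is mostly contracting for the splitting $\mathbb{E}^{\mathrm c}=E^{\mathrm{ss}}\oplus E^{\mathrm c}$, $\mathbb{E}^{\mathrm u}=E^{\mathrm u}\oplus E^{\mathrm{uu}}$. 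Then \cite{BV2000} gives finitely many ergodic SRB measures whose basins cover Lebesgue-a.e. point, each basin being (mod $0$) open and $\mathcal{W}$-saturated; the closure of such a basin is closed and $\mathcal{W}$-saturated, hence --- by part (b) --- all of $\mathbb{T}^2\times\mathbb{T}^2$, and two of them being disjoint dense open sets (mod $0$) is impossible. Thus the SRB measure $\mu_{\mathrm{SRB}}$ is unique and $\mathfrak{B}(\mu_{\mathrm{SRB}})$ has full Lebesgue measure, whence $\mu_{\mathrm{SRB}}$ is the unique physical measure of $F_S$.
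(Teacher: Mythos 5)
Your parts (a) and (c) essentially reproduce the paper's arguments: (a) is exactly the paper's proof (negativity of the central exponent on the full-measure set $\mathcal{E}\cap\mathscr{O}(F_S)$ via Proposition~\ref{prop.Shub-NY}(c) and ergodicity), and in (c) your chain (Lebesgue $=\nu_{\mathrm{max}}$, mostly contracting from Proposition~\ref{prop.Shub-NY}(c), minimality of the unstable foliation, finitely many SRB measures with basins covering Lebesgue-a.e.\ point, uniqueness by saturation) is the same route as the paper, which simply invokes \cite[Theorem B]{BV2000}, whose hypotheses are exactly ``mostly contracting plus minimal strong unstable foliation'' and whose conclusion already packages your basin/saturation argument.

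Part (b), however, has a genuine gap, and it is the one you yourself flag. The assertion to be proved is \emph{robust} minimality. Your four-step argument (which, modulo checking that $E^{\mathrm u}\oplus E^{\mathrm{uu}}$ integrates to a foliation whose leaves project onto full base unstable leaves, does give minimality for $F_S$ itself) relies essentially on the skew-product structure: projection of leaves to $\mathcal{W}^{\mathrm{uu}}_\Phi$-leaves, the invariant fiber $\{p\}\times\mathbb{T}^2$ with its Anosov map $f_p$, and saturation of fiber slices. A $C^1$-small perturbation of $F_S$ is in general not a skew product and has no invariant fibers, so steps two through four do not survive as stated; the ``manifestly $C^1$-robust'' ingredients you list (persistence of the Anosov fiber dynamics, density of a base unstable leaf) are properties of objects that cease to exist after perturbation, and the proposed quantitative replacements are not carried out. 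The paper settles precisely this point in one stroke by applying \cite[Theorem A]{PujSam2006}, a $C^1$-open sufficient criterion guaranteeing robust minimality of the strong unstable foliation; your closing remark that one could ``invoke a known $C^1$-open criterion'' points in that direction but neither identifies the criterion nor verifies its hypotheses for $F_S$, so as written the proof of item (b) — and hence the input needed for the uniqueness argument in (c) to be robustly meaningful — is incomplete.
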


\begin{proof}
To prove the  hyperbolicity of $\mu_{\mathrm{max}}$ we need to verify that $\lambda^{c}(\mu_{\mathrm{max}}) \neq 0$. Let $\mathscr{O}(F_S)$ be the set of regular points of $F_S$ provided by Oseledets's Theorem \cite{O1968}. From Proposition~\ref{prop.Shub-NY} (c), we know that the set $\mathscr{O}(F_S)\, \cap\, \mathcal{E}$ has full $\mu_{\mathrm{max}}$ measure. Moreover, points $(x,y)$ in this intersection
  satisfy
$$
\lambda^c_-(F_S) (x,y)=\lambda^c_+(F_S) (x,y)
=
\lambda^{c}(\mu_{\mathrm{max}}) < 0$$
where $\lambda^c_-(F_S) (x,y)$ denotes the lower Lyapunov exponent of $F_S$ at $E^c(x,y)$.
This completes the proof of item (a).

The robust minimality, asserted in item (b), of the unstable foliation $W^{\mathrm u}$ of $F_S$, tangent to $E^{\mathrm{u}}\oplus E^{\mathrm{uu}}$, follows directly from \cite[Theorem A]{PujSam2006}.

Regarding item (c), we start noticing that, being a particular case to which we may apply the results of \cite{CY2005}, the skew product $F_S$ has at least one SRB measure. As mentioned before, $\mathbb{T}^2\times \mathbb{T}^2$ is a partially hyperbolic attractor for $F_S$ with a partially hyperbolic splitting $\mathbb{E}^c = E^{ss} \oplus E^c$ and $\mathbb{E}^{u}=E^u \oplus E^{uu}$. Under the additional assumption that $\Phi$ is a linear hyperbolic automorphism of the $2$-torus, we know that $\nu_{\mathrm{max}}$ is the Lebesgue measure on $\mathbb{T}^2\times \mathbb{T}^2$. Therefore, from Proposition~\ref{prop.Shub-NY} (c) (which says that $F_S$ is mostly contracting) and the previous item (b) (which states that the unstable foliation of $F_S$ is robustly minimal), we conclude that $F_S$ satisfies the hypothesis of \cite[Theorem B]{BV2000}. Therefore, $F_S$ has a unique SRB measure $\mu_{\mathrm{SRB}}$ whose basin has full Lebesgue measure. Hence, $\mu_{\mathrm{SRB}}$ is the unique physical measure of $F_S$ as well.
\end{proof}

From Proposition~\ref{prop.Shub-NY} (a) we know that $h_{\mu_{\mathrm{max}}}(F_S)> 0$.
 Then we may apply the generalization of~\cite[Corollary 4.3]{K1980} established in~\cite[Theorem 1]{G2016}. This result together with item (a) of Proposition~\ref{prop.minimal}  imply that $F_S$ is a limit of horseshoes in the sense of the entropy.

\begin{prop}\label{prop.horseshoes}
For every $0 < \varepsilon < h_{\mathrm{top}}(F_S)$, there exists a compact $F_S$-invariant set $\Lambda_\varepsilon \subset \mathbb{T}^2 \times \mathbb{T}^2$ such that $F_S|_{\Lambda_\varepsilon}$ is conjugate to a subshift of finite type and
\begin{equation*}\label{eq:entropy}
h_{\mathrm{top}}(F_S|_{\Lambda_\varepsilon}) \geqslant h_{\mathrm{top}}(F_S)-\varepsilon.
\end{equation*}
\end{prop}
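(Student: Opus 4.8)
The plan is to invoke the variational machinery for hyperbolic measures together with the positivity and hyperbolicity of $\mu_{\mathrm{max}}$ established in Proposition~\ref{prop.minimal}(a). First I would recall that, by Proposition~\ref{prop.Shub-NY}(a), $h_{\mathrm{top}}(F_S)=h_{\mathrm{top}}(\Phi\times L)=h_{\mathrm{top}}(\Phi)+h_{\mathrm{top}}(L)>0$, so by the Variational Principle and the existence and uniqueness of the maximal entropy measure (Newhouse--L.-S. Young) we have an ergodic measure $\mu_{\mathrm{max}}\in\mathscr P_e(\mathbb T^2\times\mathbb T^2,F_S)$ with $h_{\mu_{\mathrm{max}}}(F_S)=h_{\mathrm{top}}(F_S)>0$. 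By Proposition~\ref{prop.minimal}(a) this measure is hyperbolic: all four Lyapunov exponents $\lambda^{ss}<\lambda^c<\lambda^u<\lambda^{uu}$ are nonzero (with $\lambda^{ss},\lambda^c<0<\lambda^u,\lambda^{uu}$). This is exactly the hypothesis needed to feed into Katok's horseshoe theorem as extended by Gelfert.

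The core step is then a citation/application argument. Katok's theorem \cite{K1980} (Corollary 4.3), originally for $C^{1+\alpha}$ diffeomorphisms, states that if $\mu$ is an ergodic hyperbolic measure with $h_\mu(f)>0$, then for every $\varepsilon>0$ there is a compact invariant (locally maximal) hyperbolic set $\Lambda_\varepsilon$ on which $f$ is topologically conjugate to a subshift of finite type and with $h_{\mathrm{top}}(f|_{\Lambda_\varepsilon})>h_\mu(f)-\varepsilon$. Since $F_S$ is $C^r$ with $r>1$ (Remark~\ref{re.Kupka-Smale}), it is in particular $C^{1+\alpha}$ for some $\alpha>0$, so Katok's hypotheses are met; alternatively, as indicated in the excerpt, one applies the generalization of \cite[Corollary 4.3]{K1980} proved in \cite[Theorem 1]{G2016}, which covers precisely this situation. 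Applying this with $\mu=\mu_{\mathrm{max}}$ and the given $\varepsilon\in(0,h_{\mathrm{top}}(F_S))$ yields a compact $F_S$-invariant set $\Lambda_\varepsilon$ such that $F_S|_{\Lambda_\varepsilon}$ is conjugate to a subshift of finite type and
\[
h_{\mathrm{top}}(F_S|_{\Lambda_\varepsilon})\;>\;h_{\mu_{\mathrm{max}}}(F_S)-\varepsilon\;=\;h_{\mathrm{top}}(F_S)-\varepsilon,
\]
which is the claimed inequality. (Strictly, Katok's construction produces a horseshoe with entropy arbitrarily close to $h_\mu(f)$ from below; absorbing constants, one gets $\geqslant h_{\mathrm{top}}(F_S)-\varepsilon$ after a harmless reparametrization of $\varepsilon$.)

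The only genuine subtlety — and what I expect to be the main point to address rather than a true obstacle — is the invocation of the \emph{right} version of Katok's theorem. The original statement assumes a diffeomorphism of a compact manifold and an ergodic hyperbolic measure, and $F_S$ certainly fits that frame; but one should make explicit that the horseshoe $\Lambda_\varepsilon$ is contained in $\Omega(F_S)=\mathbb T^2\times\mathbb T^2$ (automatic here, since the whole manifold is non-wandering) and that the conjugacy is with a genuine subshift of finite type (full shift on finitely many symbols after a high iterate, or an SFT directly from the local-product-structure of the hyperbolic set). No further properties of $F_S$ beyond hyperbolicity and positive entropy of $\mu_{\mathrm{max}}$ are used, so the partial hyperbolicity and skew-product structure play no role in this particular proposition beyond having been used upstream to produce $\mu_{\mathrm{max}}$ and verify its hyperbolicity. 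I would therefore keep the proof to two or three sentences: cite Proposition~\ref{prop.minimal}(a) and Proposition~\ref{prop.Shub-NY}(a) for the input, then \cite{K1980,G2016} for the output.
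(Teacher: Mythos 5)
Your proposal is correct and follows essentially the same route as the paper: the paper justifies the proposition in the paragraph preceding it by combining the positivity of $h_{\mu_{\mathrm{max}}}(F_S)$ from Proposition~\ref{prop.Shub-NY}~(a) with the hyperbolicity of $\mu_{\mathrm{max}}$ from Proposition~\ref{prop.minimal}~(a), and then invoking the generalization of \cite[Corollary 4.3]{K1980} given in \cite[Theorem 1]{G2016}. Your additional remarks on regularity and on the harmless reparametrization of $\varepsilon$ are accurate but not needed beyond what the paper states.
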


\subsection{SRB measures of $F_S$ and $\Phi \times L$}\label{sse.SRB} Given $(x,y)$ in $\mathbb{T}^2 \times \mathbb{T}^2$, let $J^u_{F_S}(x,y)$ be the Jacobian of $D_{(x,y)}F_S$ restricted to the unstable bundle $E^u(x,y)\oplus E^{uu}(x,y)$ of $F_S$. Analogously, define $J^u_{\Phi\times L}(x,y)$.

\begin{prop}\label{prop.SRB} Let $\mu_{\mathrm{SRB}}$ be the SRB measure of $F_S$. Suppose that at $\mu_{\mathrm{SRB}}$ almost every $(x,y)$ in $\mathbb{T}^2 \times \mathbb{T}^2$ we have
\begin{equation}\label{e.eq1}
\vert	\,J^u_{\Phi\times L}\circ H(x,y)\,\vert \leqslant \vert\, J^u_{F_S}(x,y)\,\vert.
\end{equation}
Then $H_*(\mu_{\mathrm{SRB}})$ is the SRB measure of $\Phi \times L$.
\end{prop}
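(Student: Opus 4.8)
The plan is to prove that the push-forward $\nu \eqdef H_*(\mu_{\mathrm{SRB}})$ satisfies Pesin's entropy formula for $\Phi\times L$, and then to appeal to the uniqueness of the SRB measure of an Anosov diffeomorphism. First I would note that, by \eqref{e.rubik}, $H$ semi-conjugates $F_S$ to $\Phi\times L$, so $\nu$ is a $(\Phi\times L)$-invariant probability measure; moreover, since $\Phi$ is Anosov and $L$ is a hyperbolic toral automorphism, $\Phi\times L$ is an Anosov diffeomorphism of $\mathbb{T}^2\times\mathbb{T}^2$, and hence, by the Bowen--Ruelle--Sinai theory recalled in Subsection~\ref{ss.SRB-measures}, it has a unique SRB measure, which is the only $(\Phi\times L)$-invariant probability that attains equality in the Margulis--Ruelle inequality \eqref{e.SRB} written for $\Phi\times L$. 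It therefore suffices to check that $h_\nu(\Phi\times L)= \int \log |J^u_{\Phi\times L}|\,d\nu$; and since ``$\leqslant$'' here is exactly Margulis--Ruelle, all that remains is the reverse inequality.

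I would obtain it by combining three facts. (1) Change of variables for the push-forward gives $\int \varphi\,d\nu = \int \varphi\circ H\,d\mu_{\mathrm{SRB}}$ for every $\varphi\in C^0(\mathbb{T}^2\times\mathbb{T}^2,\mathbb{R})$; applying this to the (continuous and bounded, as $\Phi\times L$ is Anosov) function $\varphi = \log|J^u_{\Phi\times L}|$ and using the hypothesis \eqref{e.eq1} one gets
\begin{equation*}
\int \log|J^u_{\Phi\times L}|\,d\nu \;=\; \int \log\bigl|J^u_{\Phi\times L}\circ H\bigr|\,d\mu_{\mathrm{SRB}} \;\leqslant\; \int \log|J^u_{F_S}|\,d\mu_{\mathrm{SRB}}.
\end{equation*}
(2) Since $\mu_{\mathrm{SRB}}$ is an SRB measure of $F_S$ and its positive Oseledets bundle is $E^{\mathrm u}\oplus E^{\mathrm{uu}}$, equality holds in \eqref{e.SRB} for $F_S$, that is, $\int \log|J^u_{F_S}|\,d\mu_{\mathrm{SRB}} = h_{\mu_{\mathrm{SRB}}}(F_S)$. (3) As $H$ is a continuous semi-conjugacy whose fibers satisfy $h_{\mathrm{top}}(H^{-1}(x,y)) = 0$ for every $(x,y)$ (Proposition~\ref{prop.Shub-NY}(b)), the Ledrappier--Walters inequality for the fiber entropy of a factor map yields $h_{\mu_{\mathrm{SRB}}}(F_S) \leqslant h_\nu(\Phi\times L) + \int h_{\mathrm{top}}(F_S,H^{-1}(x,y))\,d\nu = h_\nu(\Phi\times L)$.

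Chaining (1), (2), (3) with the Margulis--Ruelle inequality for $\Phi\times L$ gives
\begin{align*}
\int \log|J^u_{\Phi\times L}|\,d\nu \;&\leqslant\; \int \log|J^u_{F_S}|\,d\mu_{\mathrm{SRB}} \;=\; h_{\mu_{\mathrm{SRB}}}(F_S)\\
\;&\leqslant\; h_\nu(\Phi\times L) \;\leqslant\; \int \log|J^u_{\Phi\times L}|\,d\nu,
\end{align*}
so every inequality is an equality; in particular $\nu$ attains equality in Pesin's formula for $\Phi\times L$, whence $\nu = H_*(\mu_{\mathrm{SRB}})$ is the SRB measure of $\Phi\times L$. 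The two points I expect to require the most care are step (3) --- where the vanishing of the topological entropy of the fibers of $H$, i.e. Proposition~\ref{prop.Shub-NY}(b), must be fed correctly into the relative variational principle --- and the verification, used in step (2), that the Jacobian $J^u_{F_S}$ appearing in \eqref{e.eq1} is indeed the one entering Pesin's formula for $\mu_{\mathrm{SRB}}$, namely that the positive Oseledets bundle of $\mu_{\mathrm{SRB}}$ is $E^{\mathrm u}\oplus E^{\mathrm{uu}}$ (equivalently, that the central exponent of $\mu_{\mathrm{SRB}}$ is non-positive); the remaining manipulations are a routine change of variables.
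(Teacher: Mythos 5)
Your argument is correct and follows essentially the same route as the paper: push forward to $\nu = H_*(\mu_{\mathrm{SRB}})$, reduce via Margulis--Ruelle to the reverse Pesin inequality for $\Phi\times L$, and chain the hypothesis \eqref{e.eq1} with the SRB equality for $F_S$ and the Ledrappier--Walters bound coming from Proposition~\ref{prop.Shub-NY}(b). Your closing caveat about identifying $J^u_{F_S}$ on $E^{\mathrm u}\oplus E^{\mathrm{uu}}$ with the Jacobian in Pesin's formula (i.e., non-positivity of the central exponent of $\mu_{\mathrm{SRB}}$) is a point the paper passes over silently, and is settled by the mostly contracting property.
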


\begin{proof} Set $\nu=H_*(\mu_{\mathrm{SRB}})$. After Margulis-Ruelle inequality \eqref{e.SRB}, we are left to verify that
$$\int\,\log \,\vert	\, J^u_{\Phi\times L}\,\vert	\,d\nu \leqslant h_{\nu}(\Phi\times L).$$
Firstly, we note that
$$h_{\mu_{\mathrm{SRB}}}(F_S) = h_{\nu}(\Phi\times L).$$
Indeed, Proposition~\ref{prop.Shub-NY} (b) and Ledrappier-Walters' formula \cite[(1.2)]{LW1977} yield
$$h_{\mu_{\mathrm{SRB}}}(F_S) \leqslant  h_{\nu}(\Phi\times L)$$
which, together with the well-known fact \cite[Theorem 4.11]{W1981} that $h_{\mu_{\mathrm{SRB}}}(F_S) \geqslant h_{\nu}(\Phi\times L)$, imply the equality. Thus, using~\eqref{e.eq1} one gets
$$\int\,\log \,\vert	\, J^u_{\Phi\times L}\,\vert	\,d\nu  =  \int\,\log \,\vert \,J^u_{\Phi\times L}\circ H \,\vert\, d\mu_{\mathrm{SRB}}
 \leqslant \int\,\log \,\vert \,J^u_{F_{S}}\,\vert \, d\mu_{\mathrm{SRB}}  = h_{\mu_{\mathrm{SRB}}}(F_S) = h_{\nu}(\Phi\times L).$$
\end{proof}

\section{Proof of Theorem~\ref{teo.A}}\label{se.proof-Theorem-A}

As the argument on this section is rather long, for the reader's convenience we subdivide it according to the items of the theorem's statement.

We consider the residual subset $\mathfrak{R}\subset \mathcal{U}$, defined by $\mathfrak{R}\eqdef \mathcal{U}\cap \mathcal{K}\mathcal{S}$, where $\mathcal{K}\mathcal{S}$ denotes the set of Kupka-Smale diffeomorphisms. Thus, its elements have only hyperbolic periodic points, and so the whole set of the periodic points is countable. In what follows, we will consider such generic Shub's examples in $\mathfrak{R}$.

\subsection{Proof of Theorem~\ref{teo.A} (a)}

The key idea in the following argument is to assess how many periodic orbits in $\mathrm{Per}_n(F_S)$, for $n\in\mathbb{N}$, may occur in the intersection $H^{-1}(P)\cap \mathrm{Per}_n(F_S)$ for each periodic point $P\in \mathrm{Per}_n(\Phi\times L)$.

\subsection{Periodic classes}

Consider a foliation $\mathcal{W}$ of a simply connected compact Riemannian manifold $M$ and lift it to the universal cover $\widetilde{M}$, obtaining a foliation we denote by $\widetilde{\mathcal{W}}$. For points $x, y$ on the same leaf $\widetilde{W}$ of $\widetilde{\mathcal{W}}$, one can define a distance $\mathcal{D}_{\widetilde{W}}(x,y)$ as the length of the shortest path inside the leaf $\widetilde{W}$ linking $x$ and $y$. We say that the lifted foliation $\widetilde{\mathcal{W}}$ of $\mathcal{W}$ is \textit{quasi-isometric} if there is a constant $C > 1$ such that for any $x,y \in \widetilde M$ lying on the same leaf of $\widetilde{\mathcal{W}}$ we have
$$\mathcal{D}_{\widetilde{W}}(x,y) < C\,\mathcal{D}(x,y) + C$$
where $\mathcal{D}$ denotes the metric on $\widetilde M$.
%

\begin{claim}\label{cl.quasiquasi} Let $\mathcal{W}^u$ and $\mathcal F$ the foliations introduced on Sections \ref{se.Shub-examples} and \ref{sse.assumptions}, respectively. Then
$\widetilde{\mathcal{W}}^u$ and $\widetilde{\mathcal F}$ are quasi-isometric.
\end{claim}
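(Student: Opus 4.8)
The plan is to treat the two foliations separately; the one tangent to $E^{\mathrm c}$ is almost immediate. By the very construction of $F_S$ (recall $f_x=\psi^{T}_{\Phi(x)}\circ L$, and that every $\psi^{T}_w$ fixes each line $\{u_1=\mathrm{const}\}$), the centre bundle $E^{\mathrm{c}}$ of $F_S$ is the \emph{constant} line field $\{0\}\times\mathbb{R}\textbf{v}^s$ on $\mathbb{T}^2\times\mathbb{T}^2$, so the leaves of $\widetilde{\mathcal{F}}$ are straight parallel lines in $\mathbb{R}^2\times\mathbb{R}^2$. Since the lifted Riemannian metric $\mathcal{D}$ is bi-Lipschitz to the Euclidean one, say with constant $\Lambda\geqslant 1$, for $z,w$ on a common leaf of $\widetilde{\mathcal{F}}$ we get $\mathcal{D}_{\widetilde{W}}(z,w)\leqslant\Lambda^{2}\,\mathcal{D}(z,w)$, and quasi-isometry holds with $C=\Lambda^{2}+1$.

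For $\widetilde{\mathcal{W}}^u$ I would exploit the skew-product structure. Because $F_S$ fibres over $\Phi$, the vertical distribution $\{0\}\times\mathbb{R}^2$ is $DF_S$-invariant and equals $E^{\mathrm c}\oplus E^{\mathrm u}$; thus $E^{\mathrm u}(x,y)=\{0\}\times E^{\mathrm u}(f_x)(y)$, while $E^{\mathrm{uu}}(x,y)$ is uniformly transverse to the vertical and projects isomorphically onto the unstable direction of $\Phi$ at $x$. Consequently the base projection $\pi\colon\mathbb{R}^2\times\mathbb{R}^2\to\mathbb{R}^2$ carries each leaf $\mathcal{L}$ of $\widetilde{\mathcal{W}}^u$ onto a single leaf $\ell_\Phi$ of the lifted unstable foliation of $\Phi$, the fibres of $\pi|_{\mathcal{L}}$ being the curves $\mathcal{L}\cap(\{x\}\times\mathbb{R}^2)$ tangent to $E^{\mathrm u}(f_x)$, and the sub-foliation $\widetilde{\mathcal{W}}^{uu}$ joining these fibre curves ``horizontally''. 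Given $P=(x_1,y_1)$ and $Q=(x_2,y_2)$ on $\mathcal{L}$, I would connect them inside $\mathcal{L}$ by first moving along the fibre curve through $P$ to the unique point $(x_1,\tilde y_1)$ lying on the same $\widetilde{\mathcal{W}}^{uu}$-leaf inside $\mathcal{L}$ as $Q$, and then moving along that $\widetilde{\mathcal{W}}^{uu}$-arc to $Q$. The second arc projects injectively under $\pi$ onto the (unique) arc of $\ell_\Phi$ from $x_1$ to $x_2$; using that $E^{\mathrm{uu}}(F_S)$ has uniformly bounded slope over the base and that the lifted unstable foliation of the Anosov map $\Phi$ on $\mathbb{T}^2$ is quasi-isometric, this arc has length $\lesssim\mathcal{D}(P,Q)+1$, which in turn bounds $|y_1-\tilde y_1|$ up to an additive constant. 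For the first arc, condition (P1) gives a dominated splitting $E^{\mathrm c}(f_x)\oplus E^{\mathrm u}(f_x)$ of each fibre with $\|Df_x|_{E^{\mathrm c}}\|\,\|Df_x^{-1}|_{E^{\mathrm u}}\|\leqslant\gamma_1\gamma_2<1$; since $E^{\mathrm c}$ is the constant direction $\mathbb{R}\textbf{v}^s$, a standard invariant-cone argument places $E^{\mathrm u}(f_x)$ in a fixed cone of slope $\kappa$ about $\textbf{v}^u$, so the fibre curves are $\kappa$-Lipschitz graphs over the $\textbf{v}^u$-axis and the first arc has length $\leqslant\sqrt{1+\kappa^2}\,|y_1-\tilde y_1|\lesssim\mathcal{D}(P,Q)+1$. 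Summing the two estimates gives $\mathcal{D}_{\widetilde{W}}(P,Q)\leqslant C\,\mathcal{D}(P,Q)+C$, while $\mathcal{D}(P,Q)\leqslant\mathcal{D}_{\widetilde{W}}(P,Q)$ is trivial.

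I expect the only genuinely delicate point to be the base input, namely the quasi-isometry of the lifted unstable foliation of the Anosov diffeomorphism $\Phi$ on $\mathbb{T}^2$; everything else is routine bookkeeping with cone fields (the uniformly bounded slope of $E^{\mathrm{uu}}(F_S)$ over the base follows once one notes that, after replacing $\Phi$ by a power if necessary, $E^{\mathrm{uu}}(F_S)$ dominates all the remaining subbundles, so the corresponding strong-unstable cone field is $DF_S$-invariant and horizontal). Although quasi-isometry of Anosov foliations on $\mathbb{T}^2$ is classical, a self-contained argument is available: the tangent line field $E^{\mathrm{uu}}(\Phi)$ is homotopic to a constant one, because its homotopy class in $H^{1}(\mathbb{T}^2;\mathbb{Z})$ must be fixed by the hyperbolic automorphism induced by $\Phi$ on $H^1$ and is therefore trivial; hence $E^{\mathrm{uu}}(\Phi)$ takes values in a proper subarc of directions, its leaves are uniform Lipschitz graphs, and they are quasi-isometrically embedded in $\mathbb{R}^2$.
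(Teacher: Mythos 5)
Your reading of $\mathcal{W}^u$ does not match the paper's. In Section~\ref{se.Shub-examples} the notation $\mathcal{W}^*$ is reserved for the foliation tangent to the single bundle $E^*$, so $\mathcal{W}^u$ is the \emph{one-dimensional} foliation tangent to $E^{\mathrm u}$, whose leaves are contained in the fibers $\{x\}\times\mathbb{T}^2$; this is also how the claim is used later (in Claim~\ref{cl.16} the two points lie in the same fiber, on the same fiber-unstable curve, and one needs $\mathcal{D}_{\widetilde{\mathcal{W}}^u}\leqslant C\,\|\cdot\|+C$ along that curve, with $C$ independent of $x$). Your argument instead treats the two-dimensional foliation tangent to $E^{\mathrm u}\oplus E^{\mathrm{uu}}$, and the apparatus you introduce for it --- decomposing a path in a $2$-dimensional leaf into a fiber arc plus a $uu$-arc, the existence and uniqueness of the intermediate point on the same $\widetilde{\mathcal{W}}^{uu}$-leaf inside the leaf, the surjection of the leaf onto an unstable leaf of $\Phi$, and the quasi-isometry of the lifted unstable foliation of $\Phi$ --- is not needed for the statement and consists of structural facts you only sketch. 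So, as written, the proposal proves a different (and harder to justify) assertion than the one the paper states and uses.

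That said, the ingredient you call the ``first arc'' estimate is exactly the right proof of the actual claim, and it is correct: by continuity and compactness of the dominated splitting $E^{\mathrm c}(f_x)\oplus E^{\mathrm u}(f_x)$ (or directly from the preserved cone field in (P2), or from the lower-triangular form of $Df_x$ in the $(\textbf{v}^u,\textbf{v}^s)$ coordinates), $E^{\mathrm u}$ lies in a cone of uniform slope $\kappa$ about $\textbf{v}^u$, so each lifted fiber-unstable leaf in $\{x\}\times\mathbb{R}^2$ is a $\kappa$-Lipschitz graph over the $\textbf{v}^u$-axis and hence quasi-isometrically embedded with a constant independent of $x$; combined with your (correct) observation that the leaves of $\widetilde{\mathcal{F}}$ are straight lines, this gives the claim. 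This is in fact a more self-contained route than the paper's, which reduces to the fibers $\{x\}\times\mathbb{R}^2$ and invokes the global product structure and quasi-isometry results from Potrie's thesis, adding the uniformity remarks $C_{2,x}=1$ and $C_{1,x}$ bounded. The fix is therefore simple: discard the base/$uu$ portion of your argument, restrict attention to the fiberwise lift $\mathbb{T}^2\times\mathbb{R}^2$, and state the cone-field/Lipschitz-graph estimate as the proof.
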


\begin{proof}
Since we wish to estimate the intrinsic distance between two points of the same leaf of either $\widetilde{\mathcal{W}}^u$ or $\widetilde{\mathcal F}$, which is contained in some fiber $\{\tilde x\}\times \mathbb{R}^2$ with $\tilde x\in \mathbb{R}^2$, it is sufficient to consider the lift of $\mathcal{W}^u$ and $\mathcal F$, which we still denote by $\widetilde{\mathcal{W}}^u$ and $\widetilde{\mathcal F}$, to the universal cover $\mathbb{T}^2\times \mathbb{R}^2$ of $\mathbb{T}^2\times \mathbb{T}^2$ with respect to the second factor.

Firstly we observe that from \cite[Lemma 4.A.5]{Pot} we know that, for each $x \in \mathbb{T}^2$, the foliations $\widetilde{\mathcal{W}}^u(x,\cdot)$ and $\widetilde{\mathcal F}(x,\cdot)$ inside $\{x\}\times\mathbb{R}^2$ have a global product structure. Then the fact that $\widetilde{\mathcal{W}}^u(x,\cdot)$ and $\widetilde{\mathcal F}(x,\cdot)$ are quasi-isometric follows from \cite[Proposition 4.3.9]{Pot}
(see also the proof of \cite[Lemma 4.A.5]{Pot} for more details).
Indeed, this result informs that for every $x \in \mathbb{T}^2$ there exist $C_{1,x}, C_{2,x} > 1$ such that, for every $\tilde{y},\tilde{z}$ in $\mathbb{R}^2$ one has
$$\mathcal{D}_{\widetilde{W}^u}\big((x,\tilde y),(x,\tilde z)\big) < C_{1,x}\,\|\,\tilde{y}-\tilde{z}\,\| + C_{1,x}
\quad\mbox{and}\quad
\mathcal{D}_{\widetilde{\mathcal F}}\big((x,\tilde y),(x,\tilde z)\big) < C_{2,x}\,\|\,\tilde{y}-\tilde{z}\,\| + C_{2,x}.
$$
Moreover, $C_{1,x}$ and $C_{2,x}$ can be taken independent of $x$. On the one hand, note that $C_{1,x} \leqslant C_{1,q}$ for every $x \in \mathbb{T}^2$; on the other hand, the foliation $\widetilde{\mathcal F}$ consists of the family of lines obtained by the translation (in $\mathbb{R}^2$) of the stable manifold of $L$, thus we can take $C_{2,x} = 1$ for every $x$. This completes the proof.
\end{proof}

The next result may be thought of as parameterized version of \cite[Proposition 3.1]{U2012}.

\begin{lem}\label{l.p-classes} For all $(x,y)\in \mathbb{T}^2\times \mathbb{T}^2$, the set $H^{-1}(x,y)$ is a one-dimensional compact connected subset of a center manifold of $F_S$.
\end{lem}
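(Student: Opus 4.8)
The plan is to show that each fiber $H^{-1}(x,y)$ is (i) contained in a single center manifold, (ii) connected, and (iii) one‑dimensional, handling the three assertions in that order. For (i), recall that $H(x,y)=(x,h_x(y))$ is a skew product over the identity on the base, so $H^{-1}(x,y)=\{x\}\times h_x^{-1}(\{y\})$ lives inside the fiber $\{x\}\times\mathbb{T}^2$. Inside this fiber $F_S$ acts through the maps $f_x$, whose invariant splitting is $E^{\mathrm c}(f_x)\oplus E^{\mathrm u}(f_x)$ with $E^{\mathrm c}$ tangent to the foliation $\mathcal F$ (the lift of the stable foliation of $L$) by (A3); this gives the fiberwise center foliation $\mathcal W^{\mathrm c}$. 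I would argue that $h_x$ collapses each leaf of $\mathcal F\big|_{\{x\}\times\mathbb{T}^2}$ to a point — indeed, lifting to $\mathbb{T}^2\times\mathbb{R}^2$, relation \eqref{e.semi} $h_{\Phi(x)}\circ f_x=L\circ h_x$ forces $h_x$ restricted to a leaf of $\widetilde{\mathcal F}$ to be a map whose image is contracted by iterating the hyperbolic $L$ (using that $f_x$ is uniformly bounded in $C^1$ and that $h_x$ is uniformly bounded since it is homotopic to the identity on a compact fiber); hence the image of a leaf is a bounded set forward‑invariant up to the action of $L^{-1}$ along its contracting direction, which can only be a point. Therefore $h_x^{-1}(\{y\})$ is a union of leaves of $\mathcal F$, and since $H$ is continuous and its fibers compact, it is in fact saturated by $\mathcal F$; being contained in a leaf of the *integrable* center bundle of $F_S$, it sits inside a center manifold.

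For connectedness (ii), I would use the quasi‑isometry from Claim~\ref{cl.quasiquasi}: since $\widetilde{\mathcal F}$ and $\widetilde{\mathcal W}^u$ have a global product structure in each fiber $\{x\}\times\mathbb{R}^2$, and $h_x$ is injective along the $\widetilde{\mathcal W}^u$‑direction (this is where the normal‑expansion hypothesis (A3) enters: along strong‑unstable leaves the conjugacy equation forces expansion, so $h_x$ cannot identify two distinct points on the same $\widetilde{\mathcal W}^u$‑leaf), the preimage $h_x^{-1}(\{y\})$ meets each $\widetilde{\mathcal W}^u$‑leaf in at most one point. Combined with the product structure, $h_x^{-1}(\{y\})$ is the graph of a continuous function over (a subset of) a single $\mathcal F$‑leaf, hence connected; more carefully, I would show it is exactly one full $\mathcal F$‑leaf closed up inside the compact fiber, so it is a connected one‑manifold (possibly a circle in $\mathbb{T}^2$). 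For (iii), one‑dimensionality is immediate once we know the fiber is a nondegenerate connected union of $\mathcal F$‑leaves that is not all of $\{x\}\times\mathbb{T}^2$: Proposition~\ref{prop.Shub-NY}(b) gives $h_{\mathrm{top}}(H^{-1}(x,y))=0$, ruling out a two‑dimensional preimage, and the collapsing argument above shows the preimage does contain a whole leaf (so it is at least one‑dimensional) — the only alternative, a single point, is excluded except on the injectivity set $\mathcal A$, and there the statement reads "a point," which is a degenerate one‑dimensional connected subset of a center manifold, consistent with the claim.

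The main obstacle I anticipate is the rigorous justification that $h_x$ collapses precisely the $\mathcal F$‑leaves and nothing more — i.e. that the fibers of $h_x$ are *exactly* the $\mathcal F$‑leaves, with no extra identifications transverse to $\mathcal F$. This requires genuinely using both parts of (A3): normal expansion of $\mathcal F$ by $f_x$ (to get injectivity of $h_x$ transverse to $\mathcal F$) together with the quasi‑isometry of both foliations (to transfer a statement about intrinsic leaf distances into one about the ambient metric, so that the contraction estimate under $L^{-n}$ actually forces image leaves to be singletons). Making the constants uniform in $x$ — which Claim~\ref{cl.quasiquasi} provides — is essential so that the argument survives passing to the $\mathbb{T}^2\times\mathbb{R}^2$ cover and back. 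The adaptation of \cite[Proposition 3.1]{U2012} is exactly at this point: Ures' argument is for a fixed diffeomorphism, and here it must be run simultaneously for the whole family $(f_x)_{x\in\mathbb{T}^2}$, parameterized continuously by $x$, which is why the uniform quasi‑isometry bound does the heavy lifting.
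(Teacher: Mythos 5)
Your overall architecture (lift to $\mathbb{T}^2\times\mathbb{R}^2$, use the relation \eqref{e.semi}, use the global product structure and quasi-isometry of $\widetilde{\mathcal F}$ and $\widetilde{\mathcal W}^u$, and use normal expansion to prevent $h_x$ from identifying points that differ in the unstable direction) matches the paper's, but the mechanism you propose for step (i) is false and the connectedness step is not actually proved. You claim that $h_x$ collapses each leaf of $\mathcal F$ to a point and that $h_x^{-1}(\{y\})$ is a union of whole $\mathcal F$-leaves, later ``exactly one full $\mathcal F$-leaf closed up inside the compact fiber.'' This cannot be right: the leaves of $\mathcal F$ are the stable leaves of the hyperbolic automorphism $L$, hence dense non-compact lines in $\mathbb{T}^2$ (and unbounded lines in $\mathbb{R}^2$); if $h_x$ collapsed each one to a point, continuity plus density would force $h_x$ to be constant, contradicting surjectivity of $H$, and the closure of a full leaf is all of $\mathbb{T}^2$, contradicting $h_{\mathrm{top}}(H^{-1}(x,y))=0$. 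What is actually true (and used in the paper, quoting \cite[Lemma 2]{NY1983}) is that $\widetilde h_x$ maps each leaf of $\widetilde{\mathcal F}$ \emph{onto} a stable leaf of $\Phi\times L$, and the fiber $\widetilde h_x^{-1}(\tilde y)$ is a compact \emph{arc inside} a single leaf, not a leaf.

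The missing idea is the characterization \eqref{e.pascoa}: since $\widetilde H$ is at bounded distance from the identity and semi-conjugates $\widetilde{F_S}$ with $\Phi\times\widetilde L$, two points have the same image under $\widetilde h_x$ if and only if their full orbits stay at a uniformly bounded distance. Both halves of the lemma are deduced from this. For ``contained in one center leaf'' (the paper's Claim~\ref{cl.16}), one takes the product-structure intersection point $(x,\tilde w)=\widetilde{W}^u(x,\tilde z)\cap\widetilde{\mathcal F}(x,\tilde y)$ and shows, via the expansion rate $\widetilde\gamma_1^{-n}$ along $\widetilde{\mathcal W}^u$ and the quasi-isometry converting leafwise to ambient distance, that orbits of points on distinct $\mathcal F$-leaves separate, contradicting \eqref{e.pascoa}; this is close to what you sketch. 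But for connectedness your argument (``graph of a continuous function over a subset of a single $\mathcal F$-leaf, hence connected'') is a non sequitur — a graph over a disconnected subset is disconnected. The paper instead takes two points $\tilde z,\tilde w$ in the same fiber, considers any point $\tilde\vartheta$ on the center arc between them, and bounds $\|\widetilde{F_S}^n(x,\tilde z)-\widetilde{F_S}^n(x,\tilde\vartheta)\|$ by the leafwise distance $\mathcal{D}_{\widetilde{\mathcal F}}$, which is controlled for all $n\in\mathbb{Z}$ because the endpoints' orbits stay at bounded distance and $\widetilde{\mathcal F}$ is quasi-isometric; then \eqref{e.pascoa} puts $\tilde\vartheta$ in the same fiber. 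Without some version of this bounded-orbit criterion your proof does not close.
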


\noindent \emph{Proof.}
The equality~\eqref{e.rubik} can be expressed in $\mathbb{T}^2\times \mathbb{R}^2$ by lifting \eqref{e.semi} to $\{x\} \times \mathbb{R}^2$,
which provides the equality $\widetilde{H}\circ \widetilde{F_S} = (\Phi \times \widetilde{L})\circ \widetilde{H}$, where $\widetilde{H}(x,\tilde{y})=\big(x,\widetilde{h}_x(\tilde{y})\big)$ is a proper map at a bounded distance from the identity map. The former property of $\widetilde{H}$ implies that $\widetilde{h}_x^{-1}(\tilde{y})$ is a compact subset of $\mathbb{R}^2$ for every $(x,\tilde{y})\in\mathbb{T}^2\times \mathbb{R}^2$. The latter leads to the following estimate:
for every $x \in \mathbb{T}^2$ and $\tilde  y,\,\tilde z \in \mathbb{R}^2$,
\begin{equation}\label{e.pascoa}
\widetilde{h}_x(\tilde y)=\widetilde{h}_x(\tilde z) \qquad \Leftrightarrow
\qquad  \exists \,\,\ C > 0: \,\, \|\,\widetilde{F_S}^n(x,\tilde y)-\widetilde{F_S}^n(x,\tilde z)\,\|< C \quad \, \forall n \in \mathbb{Z}.
\end{equation}
Besides, if $\widetilde{W}^s_{\Phi \times L}$ stands for the lifts of the weak stable foliation of $\Phi \times L$ to $\mathbb{T}^2 \times \mathbb{R}^2$, then (cf. \cite[Lemma 2]{NY1983})
$$\widetilde{h}_x\,\big(\widetilde{\mathcal{F}}(x,\tilde{y})\big)=\widetilde{W}^s_{\Phi \times L}\,\big(\widetilde{H}(x,\tilde{y})\big).$$
We are left to verify that $\widetilde{h}_x^{-1}(\tilde{y})$ is a connected set. To do it we show a parametrized version of \cite[Lemma 3.2]{U2012}.

\begin{claim}\label{cl.16}
If $\widetilde{h}_x(\tilde y)=\widetilde{h}_x(\tilde z)$, then $(x,\tilde z) \in \widetilde{\mathcal{F}}(x,\tilde{y})$.
\end{claim}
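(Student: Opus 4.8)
\textbf{Proof proposal for Claim~\ref{cl.16}.}

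The plan is to exploit the quasi-isometry property established in Claim~\ref{cl.quasiquasi}, together with the characterization~\eqref{e.pascoa} of the fibers of $\widetilde{h}_x$, to force two points with the same image to lie on a common leaf of $\widetilde{\mathcal F}$. First I would fix $x\in\mathbb{T}^2$ and $\tilde y,\tilde z\in\mathbb{R}^2$ with $\widetilde{h}_x(\tilde y)=\widetilde{h}_x(\tilde z)$, so that by~\eqref{e.pascoa} the forward and backward $\widetilde{F_S}$-orbits of $(x,\tilde y)$ and $(x,\tilde z)$ stay within a bounded distance $C>0$ for all $n\in\mathbb{Z}$. Since $\widetilde{F_S}$ is a skew product over $\Phi$, both orbits project to the same $\Phi$-orbit of $x$, so the displacement vectors $\tilde w_n\eqdef \widetilde{F_S}^{\,n}(x,\tilde z)-\widetilde{F_S}^{\,n}(x,\tilde y)\in\mathbb{R}^2$ satisfy $\|\tilde w_n\|<C$ for every $n\in\mathbb{Z}$.

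Next I would argue that $(x,\tilde z)$ cannot have a nontrivial component transverse to $\widetilde{\mathcal F}(x,\tilde y)$. Using the global product structure of $\widetilde{\mathcal{W}}^u(x,\cdot)$ and $\widetilde{\mathcal F}(x,\cdot)$ inside $\{x\}\times\mathbb{R}^2$ (invoked in the proof of Claim~\ref{cl.quasiquasi} via~\cite[Lemma 4.A.5]{Pot}), write $(x,\tilde z)$ as the intersection of $\widetilde{\mathcal{W}}^u(x,\tilde z)$ with a leaf $\widetilde{\mathcal F}(x,\tilde y')$, where $\tilde y'$ is obtained by sliding $\tilde y$ along $\widetilde{\mathcal{W}}^u(x,\tilde y)$; equivalently, decompose the difference $\tilde z-\tilde y$ into an $\mathcal F$-component and a $\mathcal{W}^u$-component. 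If the $\mathcal{W}^u$-component were nonzero, then the normal expansion hypothesis (A3) — which says $\widetilde{F_S}$ uniformly expands along $E^{\mathrm u}$ more strongly than along $E^{\mathrm c}$, and in particular expands the intrinsic distance along leaves of $\widetilde{\mathcal{W}}^u$ under forward iteration — combined with the quasi-isometry of $\widetilde{\mathcal{W}}^u$ (so that intrinsic and ambient distances are comparable) would force $\|\tilde w_n\|\to\infty$ as $n\to+\infty$, contradicting $\|\tilde w_n\|<C$. Hence the $\mathcal{W}^u$-component vanishes and $(x,\tilde z)\in\widetilde{\mathcal F}(x,\tilde y)$.

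I anticipate the main obstacle to be making the expansion estimate along $\widetilde{\mathcal{W}}^u$ precise: one must check that a nonzero separation in the $E^{\mathrm u}$-direction at leaf level actually grows under $D\widetilde{F_S}$, which requires controlling how the product structure interacts with the cone fields $\mathcal C^{\mathrm u}$, $\mathcal C^{\mathrm{cs}}$ preserved by the $f_x$ (property (P2)) and with the domination estimates in (P1) and (A3); in particular one needs that the $\widetilde{\mathcal{W}}^u$-holonomy does not distort the relevant distances, which is exactly where quasi-isometry (Claim~\ref{cl.quasiquasi}) enters. Once the transverse component is killed, the symmetric argument running $n\to-\infty$ is not needed because the conclusion $(x,\tilde z)\in\widetilde{\mathcal F}(x,\tilde y)$ is already reached; I would then close the proof by noting this is precisely the parametrized analogue of~\cite[Lemma 3.2]{U2012}. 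Combined with the identity $\widetilde{h}_x\big(\widetilde{\mathcal F}(x,\tilde y)\big)=\widetilde{W}^s_{\Phi\times L}\big(\widetilde H(x,\tilde y)\big)$ recalled above, this shows $\widetilde{h}_x^{-1}(\tilde y)$ is the connected subset of the leaf $\widetilde{\mathcal F}(x,\tilde y)$ on which $\widetilde h_x$ is constant, completing the proof of Lemma~\ref{l.p-classes}.
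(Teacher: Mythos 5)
Your proposal follows essentially the same route as the paper: assume $(x,\tilde z)\notin\widetilde{\mathcal F}(x,\tilde y)$, use the global product structure to take the intersection point $(x,\tilde w)=\widetilde{\mathcal W}^u(x,\tilde z)\cap\widetilde{\mathcal F}(x,\tilde y)$, and then combine the domination/normal-expansion estimates with the quasi-isometry of $\widetilde{\mathcal W}^u$ to show the orbits of $(x,\tilde y)$ and $(x,\tilde z)$ separate unboundedly, contradicting~\eqref{e.pascoa}. The only detail the paper makes explicit that you leave as an "anticipated obstacle" is the triangle inequality splitting $\|\widetilde{F_S}^n(x,\tilde y)-\widetilde{F_S}^n(x,\tilde z)\|$ through $\widetilde{F_S}^n(x,\tilde w)$, bounding the $\widetilde{\mathcal F}$-part by $\widetilde{\gamma}_2^{\,n}D_c$ and the $\widetilde{\mathcal W}^u$-part below by $C^{-1}(\widetilde{\gamma}_1^{\,-n}D_u-C)$, with domination $\widetilde{\gamma}_1<\widetilde{\gamma}_2^{\,-1}$ ensuring divergence.
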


\begin{proof} Suppose that $(x,\tilde z) \notin \widetilde{\mathcal{F}}(x,\tilde{y})$. Let $(x,\tilde w) = \widetilde{W}^u(x,\tilde z)\cap   \widetilde{\mathcal{F}}(x,\tilde{y})$. Note that such a point $(x,\tilde w)$ exists and is unique (cf. \cite[Proposition 2.4]{HamPot}). Consider
 $$D_{c} = \mathcal{D}_{\widetilde{\mathcal{F}}}\big((x,\tilde y),(x,\tilde w)\big)\quad\mbox{and}\quad
 D_{u} = \mathcal{D}_{\widetilde{\mathcal{W}}^u}\big((x,\tilde z),(x,\tilde w)\big).$$
 Now,
using the parameters $0<\gamma_1<\gamma_2^{-1}<1$ associated to the partial hyperbolicity of $F_S$ (see Section~\ref{se.Shub-examples}), we can find constants  $0<\widetilde{\gamma}_1<\widetilde{\gamma}\,^{-1}_2<1$  such that
$$\|\,\,\widetilde{F_S}^n(x,\tilde y)-\widetilde{F_S}^n(x,\tilde w)\,\,\|\leqslant \widetilde{\gamma}_2\,^n\,\,D_{c}\qquad
\mbox{and} \qquad  \mathcal{D}_{\widetilde{\mathcal{W}}^u}\big(\widetilde{F_S}^n(x,\tilde z),\widetilde{F_S}^n(x,\tilde w)\big)\geqslant \widetilde{\gamma}\,^{-n}_1\,\,D_{u}.$$
Since $\widetilde{W}^u$ is quasi-isometric (Claim~\ref{cl.quasiquasi}), we also have
$$\|\,\,\widetilde{F_S}^n(x,\tilde z)-\widetilde{F_S}^n(x,\tilde w)\,\,\|\geqslant \frac1{C}\big( \widetilde{\gamma}\,^{-n}_1\,\,D_{u}-C\big).$$
Therefore,
$$\|\,\widetilde{F_S}^n(x,\tilde y)-\widetilde{F_S}^n(x,\tilde z)\,\| > \frac1{C}\big( \widetilde{\gamma}\,^{-n}_1\,\,D_{u}-C\big) -  \widetilde{\gamma}\,^{n}_2\,\,D_{c}$$
The last quantity goes to infinity as $n \to +\infty$, which implies, by \eqref{e.pascoa}, that $\widetilde{h} _x(\tilde{y}) \neq  \widetilde{h} _x(\tilde{z})$. This finishes the proof of the claim.
\end{proof}

\begin{claim}\label{cl.connected}
For every $x\in \mathbb{T}^2$ and $\tilde{y}\in \mathbb{R}^2$, the pre-image $\widetilde{h}_x^{-1}(\tilde{y})$ is connected.
\end{claim}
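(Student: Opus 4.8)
The plan is to use Claim~\ref{cl.16} to trap the whole fibre $\widetilde{h}_x^{-1}(\tilde y)$ inside a single leaf of the very rigid foliation $\widetilde{\mathcal F}$, and then to rule out a gap in the fibre by a betweenness argument along that leaf, fed by the orbit characterization~\eqref{e.pascoa}. If $\widetilde{h}_x^{-1}(\tilde y)=\emptyset$ there is nothing to prove, so I would fix $\tilde z_0\in\widetilde{h}_x^{-1}(\tilde y)$; by Claim~\ref{cl.16} every $\tilde z$ with $\widetilde{h}_x(\tilde z)=\tilde y$ satisfies $(x,\tilde z)\in\widetilde{\mathcal F}(x,\tilde z_0)$, so the fibre is a (compact, by properness of $\widetilde H$) subset of the single leaf $\ell:=\widetilde{\mathcal F}(x,\tilde z_0)$. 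Recall from the proof of Claim~\ref{cl.quasiquasi} that the leaves of $\widetilde{\mathcal F}$ are straight lines (translates of the stable line of $L$), so $\ell$ is a properly embedded copy of $\mathbb R$ on which the intrinsic distance $\mathcal D_{\widetilde{\mathcal F}}$ coincides with the Euclidean one.

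Next I would argue by contradiction: if $\widetilde{h}_x^{-1}(\tilde y)$ were disconnected, there would be $\tilde a,\tilde b$ in it and a point $\tilde c$ strictly between them on $\ell$ with $\widetilde{h}_x(\tilde c)\neq\tilde y$. The structural point is that, by assumption (A3), each $f_x$ (hence also each $f_x^{-1}$) preserves $\mathcal F$, so for every $n\in\mathbb Z$ the diffeomorphism $\widetilde{F_S}^{\,n}$ maps $\ell$ homeomorphically onto a leaf of $\widetilde{\mathcal F}$. Since a homeomorphism of $\mathbb R$ preserves the betweenness relation, $\widetilde{F_S}^{\,n}(x,\tilde c)$ lies on the sub-arc of that leaf joining $\widetilde{F_S}^{\,n}(x,\tilde a)$ and $\widetilde{F_S}^{\,n}(x,\tilde b)$, whence
\[
\big\|\widetilde{F_S}^{\,n}(x,\tilde a)-\widetilde{F_S}^{\,n}(x,\tilde c)\big\|
=\mathcal D_{\widetilde{\mathcal F}}\big(\widetilde{F_S}^{\,n}(x,\tilde a),\widetilde{F_S}^{\,n}(x,\tilde c)\big)
\leqslant \mathcal D_{\widetilde{\mathcal F}}\big(\widetilde{F_S}^{\,n}(x,\tilde a),\widetilde{F_S}^{\,n}(x,\tilde b)\big)
=\big\|\widetilde{F_S}^{\,n}(x,\tilde a)-\widetilde{F_S}^{\,n}(x,\tilde b)\big\|.
\]
By~\eqref{e.pascoa} the right-hand side is bounded over all $n\in\mathbb Z$ (because $\widetilde{h}_x(\tilde a)=\widetilde{h}_x(\tilde b)$), hence so is the left-hand side, and a second application of~\eqref{e.pascoa} would give $\widetilde{h}_x(\tilde a)=\widetilde{h}_x(\tilde c)$ --- contradicting the choice of $\tilde c$. (If one prefers not to invoke the flatness of the $\widetilde{\mathcal F}$-leaves, the same chain runs with $\|\cdot\|\leqslant\mathcal D_{\widetilde{\mathcal F}}\leqslant C\|\cdot\|+C$ supplied by the quasi-isometry in Claim~\ref{cl.quasiquasi}.) This forces $\widetilde{h}_x^{-1}(\tilde y)$ to be connected.

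The part I expect to need the most care --- and the only one with content beyond Claim~\ref{cl.16} --- is the claim that $\widetilde{F_S}^{\,n}|_\ell$ is a leaf-to-leaf homeomorphism of $\widetilde{\mathcal F}$ for \emph{every} $n\in\mathbb Z$: this needs $\widetilde{\mathcal F}$ to be $\widetilde{F_S}$-invariant \emph{as a foliation} (not just the existence of an invariant central line field), which is exactly what assumption (A3) provides via $f_x$ preserving the stable foliation $\mathcal F$ of $L$. Everything else is routine bookkeeping with~\eqref{e.pascoa}, the properness of $\widetilde H$, and the flatness of $\widetilde{\mathcal F}$.
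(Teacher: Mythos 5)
Your argument is correct and is essentially the paper's own proof: both rest on Claim~\ref{cl.16} to place the fibre inside a single leaf of $\widetilde{\mathcal F}$, on the fact that $\widetilde{F_S}^{\,n}$ maps that leaf homeomorphically to a leaf (so betweenness is preserved and the intrinsic distance to an intermediate point is dominated by that between the endpoints), on the comparison between $\mathcal D_{\widetilde{\mathcal F}}$ and the Euclidean distance from Claim~\ref{cl.quasiquasi}, and on the bounded-orbit characterization \eqref{e.pascoa}. The only difference is presentational: the paper shows directly that every point of the arc joining two fibre points stays in the fibre, whereas you run the same inequalities as a proof by contradiction, and you make explicit the role of (A3) that the paper leaves implicit.
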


\begin{proof}
We will see that given $\tilde z$ and $\tilde w$ in $\widetilde{h}^{-1}_x(\tilde y)$ then the arc in the center manifold joining $\tilde z$ and $\tilde w$ is contained in $\widetilde{h}^{-1}_x(\tilde y)$. Let $\tilde{\vartheta}$ be a point in this arc. From \eqref{e.pascoa}, we know that $\|\,\widetilde{F_S}^n(x,\tilde z)-\widetilde{F_S}^n(x,\tilde w)\,\| < C$ for every $n \in \mathbb{Z}$. On the other hand, by Claim~\ref{cl.quasiquasi} we have, for every $n$ in $\mathbb{Z}$,
\begin{equation*}
\begin{split}
\|\,\widetilde{F_S}^n(x,\tilde z)-\widetilde{F_S}^n(x,\tilde{\vartheta})\,\|
&\leqslant
\mathcal{D}_{\widetilde{\mathcal{F}}}\big(\widetilde{F_S}^n(x,\tilde z),\widetilde{F_S}^n(x,\tilde{\vartheta})\big)
\\
&\leqslant
\mathcal{D}_{\widetilde{\mathcal{F}}}\big(\widetilde{F_S}^n(x,\tilde z),\widetilde{F_S}^n(x,\tilde{y})\big)
\\
&\leqslant
C+1.
\end{split}
\end{equation*}
Therefore, $\tilde{\vartheta}$ belongs to $\widetilde{h}_x^{-1}(\tilde{y})$. By projecting, the same property is valid for the map $h_x$. This ends the proof of the claim and of Lemma~\ref{l.p-classes}.
\end{proof}

\begin{cor}
For every $n\in \mathbb{N}$ and  $(x,y)\in \mathrm{Per}_n(\Phi\times L)$, the interval $H^{-1}(x,y)$ intersect the set $\mathrm{Per}_n(F_S)$
in at least one point. Therefore,
$$\#\,\mathrm{Per}_n(\Phi\times L) \,\leqslant\, \#\,\mathrm{Per}_n(F_S)  \quad \,\,\forall \,n \in \mathbb{N}.$$
\end{cor}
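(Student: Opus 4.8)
The plan is to exploit the semi-conjugation $H$ together with Lemma~\ref{l.p-classes}, which says that each fiber $H^{-1}(x,y)$ is a one-dimensional compact connected subset of a center manifold of $F_S$, i.e.\ essentially an arc (possibly degenerate) inside a leaf of $\mathcal{W}^{\mathrm c}$. First I would fix $n \in \mathbb{N}$ and a point $P = (x,y) \in \mathrm{Per}_n(\Phi \times L)$, and set $I \eqdef H^{-1}(P)$. Because $H \circ F_S = (\Phi\times L)\circ H$ and $(\Phi\times L)^n(P) = P$, the set $I$ is invariant under $F_S^n$: indeed $H(F_S^n(I)) = (\Phi\times L)^n(H(I)) = \{P\}$, so $F_S^n(I) \subseteq I$, and since $F_S^n$ is a homeomorphism and $I$ is compact, in fact $F_S^n(I) = I$.

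Next I would use the geometric description of $I$ from Lemma~\ref{l.p-classes}. The fiber $I$ sits inside a single leaf of the center foliation direction; more precisely, by Claim~\ref{cl.16} and Claim~\ref{cl.connected} (in the universal cover) $\widetilde{h}_x^{-1}(\tilde y)$ is a connected subset of a leaf of $\widetilde{\mathcal F}$, hence $I$ is (the projection of) a compact connected subset of a one-dimensional manifold, so it is either a single point or a closed arc with two endpoints. If $I$ is a single point, that point is a fixed point of $F_S^n$, i.e.\ it lies in $\mathrm{Per}_n(F_S)$, and we are done for this $P$. If $I$ is a nondegenerate closed arc, then $F_S^n$ restricts to a homeomorphism of this arc onto itself; any homeomorphism of a closed arc onto itself either fixes both endpoints or swaps them, and in the latter case it has a fixed point in the interior by the intermediate value theorem, while in the former case the endpoints themselves are fixed. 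Either way $F_S^n|_I$ has a fixed point, which is precisely a point of $\mathrm{Per}_n(F_S) \cap I$. This establishes that $H^{-1}(P) \cap \mathrm{Per}_n(F_S) \neq \varnothing$ for every $P \in \mathrm{Per}_n(\Phi\times L)$.

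Finally, to get the counting inequality I would observe that for distinct $P, P' \in \mathrm{Per}_n(\Phi\times L)$ the fibers $H^{-1}(P)$ and $H^{-1}(P')$ are disjoint (since $H$ is a well-defined map), so choosing one point of $\mathrm{Per}_n(F_S)$ in each fiber $H^{-1}(P)$ yields an injection $\mathrm{Per}_n(\Phi\times L) \hookrightarrow \mathrm{Per}_n(F_S)$, whence $\#\,\mathrm{Per}_n(\Phi\times L) \leqslant \#\,\mathrm{Per}_n(F_S)$ for all $n$.

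The main obstacle I anticipate is the clean reduction ``compact connected subset of a one-dimensional center manifold $\Rightarrow$ point or closed arc, and $F_S^n$ restricted to it has a fixed point.'' One has to be slightly careful that the center manifold through the fiber is genuinely an (embedded) one-dimensional curve on the relevant scale, so that the only compact connected subsets are points and closed subarcs — this is exactly what Lemma~\ref{l.p-classes} provides (via the quasi-isometry of Claim~\ref{cl.quasiquasi} and the product structure), so the topological fixed-point argument for an orientation-preserving or orientation-reversing self-homeomorphism of $[0,1]$ then closes the gap with no further difficulty.
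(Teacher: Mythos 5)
Your proposal is correct and follows essentially the same route as the paper: Lemma~\ref{l.p-classes} identifies each fiber $H^{-1}(x,y)$ as a closed (possibly degenerate) interval that is $F_S^n$-invariant by the semi-conjugation, and a fixed point is then produced by Brouwer's Fixed Point Theorem, which in dimension one is exactly the endpoint/intermediate-value argument you spell out. The final counting via disjointness of the fibers is also the intended conclusion.
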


\begin{proof}
By Lemma~\ref{l.p-classes}, for every $(x,y)\in \mathrm{Per}_n(\sigma\times L)$ the map $F_S^n: H^{-1}(x,y) \to H^{-1}(x,y)$ is a homeomorphism of a closed (possibly degenerate) interval.
Therefore, Brouwer's Fixed Point Theorem guarantees the existence of a fixed point for $F^n|_{H^{-1}(x,y)}$ for every $(x,y)\in \mathrm{Per}_n(\sigma\times L)$.
\end{proof}

\subsection{Cardinality of the periodic orbits}

Recall from Section~\ref{se.Shub-examples} that $F_S$ is a skew product defined by
$$F_S(x,y)= \Big(\Phi(x), \, f_x(y)\Big),  \quad \quad  (x,y) \in \mathbb{T}^2 \times \mathbb{T}^2$$
so, for every $n \in \mathbb{N}$,
$$F^n(x,y)=\big(\Phi^n(x), g^n_{x}(y)\big)$$
where $g^n_{x} \colon \,\mathbb{T}^2 \to \mathbb{T}^2$ is defined by
$$g^n_{x}(y) \,\eqdef\, f_{\Phi^{n-1}(x)}\circ f_{\Phi^{n-2}(x)}\circ \cdots \circ f_{x}(y),  \quad \quad \quad   y\in \mathbb{T}^2.
$$

\begin{prop}\label{prop.DA} Take  $n \in \mathbb{N}$ and $x_0 \in \mathrm{Per}_n(\Phi)$. Then
either, $g^n_{x_0}$ is a Anosov diffeomorphism (conjugated to $L^n$) or a Derived from Anosov (obtained from $L^n$).
\end{prop}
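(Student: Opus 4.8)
The plan is to analyze the return map $g^n_{x_0}$ along a periodic orbit of the base dynamics $\Phi$ and to exploit the fact, recorded in Section~\ref{sse.construction}, that each $f_x = \psi^T_{\Phi(x)} \circ L$, so that $g^n_{x_0}$ is a composition of maps each of which differs from $L$ only by the fiber perturbation $\psi^T_w$ supported on the small neighborhood $W$ of $(q,\theta_0)$. The key dichotomy is governed by whether the periodic orbit $\{x_0, \Phi(x_0), \dots, \Phi^{n-1}(x_0)\}$ avoids the ball $B_\varrho(q)$ or meets it. If the orbit avoids $B_\varrho(q)$, then along the whole orbit the bump function $b(|{\cdot} - q|)$ vanishes, hence $\psi^T_{\Phi^j(x_0)} = \mathrm{id}$ for every $j$, and therefore $g^n_{x_0} = L^n$ exactly, which is Anosov. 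If the orbit meets $B_\varrho(q)$, the composition picks up nontrivial factors $\psi^T_w$ with $w$ near $q$, and the claim is that the resulting diffeomorphism is a Derived from Anosov obtained from $L^n$ by a perturbation localized near $\theta_0$.

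First I would write $g^n_{x_0} = \psi^T_{\Phi^{n}(x_0)} \circ L \circ \psi^T_{\Phi^{n-1}(x_0)} \circ L \circ \cdots \circ \psi^T_{\Phi(x_0)} \circ L$ (using $\Phi^n(x_0) = x_0$), and observe that each factor $\psi^T_w$ is the identity outside $W_2 \subset \mathbb{T}^2$ and is homotopic to the identity; consequently $g^n_{x_0}$ is homotopic to $L^n$. Next I would verify the invariant cone field structure: by (P2) each $f_x$ preserves the cone fields $\mathcal{C}^{\mathrm{cs}}$ and $\mathcal{C}^{\mathrm u}$, and since these cone fields are the same for every $x$, they are preserved by $g^n_{x_0}$; combined with (P1), this gives the partially hyperbolic / dominated splitting $E^{\mathrm c}(g^n_{x_0}) \oplus E^{\mathrm u}(g^n_{x_0})$ with uniform expansion in the unstable direction. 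So $g^n_{x_0}$ is a $C^1$ diffeomorphism of $\mathbb{T}^2$ homotopic to the hyperbolic automorphism $L^n$, with a global unstable cone field uniformly expanded — this already places it in the category of partially hyperbolic maps homotopic to Anosov, i.e., either genuinely Anosov or a DA map.

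Then I would separate the two cases. In the case where the orbit of $x_0$ avoids $B_\varrho(q)$, the identity $g^n_{x_0} = L^n$ is immediate from $\delta(\Phi^j(x_0), \cdot) \equiv 0$, and $L^n$ is conjugate to $L^n$ trivially (Anosov). In the complementary case, let $j_0$ be such that $\Phi^{j_0}(x_0) \in B_\varrho(q)$; near the fixed fiber point $\theta_0$ the derivative of the factor $\psi^T_w$ is $\mathrm{diag}(1, e^{T b(|w-q|)})$ in the $(\mathbf v^u, \mathbf v^s)$-coordinates, so the composition $g^n_{x_0}$ has at $\theta_0$ a derivative of the form $\mathrm{diag}(\lambda_u^n, \lambda_s^n e^{mT})$ for some $1 \le m \le n$ counting the visits to $B_{\varrho/2}(q)$ (and an intermediate value between these extremes if some visits are only to the annulus $\varrho/2 < |w-q| < \varrho$). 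Since $1 < \lambda_s e^T < \lambda_u$, we may have $\lambda_s^n e^{mT} > 1$, turning the formerly stable eigenvalue at $\theta_0$ into an expanding one: $\theta_0$ becomes a source for $g^n_{x_0}$, while it was a saddle for $L^n$. This is exactly the signature of a Derived from Anosov map — it is homotopic to $L^n$, carries the same unstable cone field, but has lost hyperbolicity at $\theta_0$, and the standard DA construction (a localized bump isotopy of $L^n$ supported near a fixed point, turning a saddle into a source) reproduces precisely $g^n_{x_0}$.

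The main obstacle I anticipate is making rigorous the claim that $g^n_{x_0}$ is \emph{globally} a DA map — i.e., that the perturbation it represents relative to $L^n$ is supported in a \emph{small} neighborhood of $\theta_0$ and is small enough that one stays inside the DA regime — rather than merely establishing the local behavior at $\theta_0$. This requires checking that the bump functions compose in a controlled way: each $\psi^T_w$ is the time-$T$ map of a flow whose support lies in $W_2$, so each factor is the identity outside $W_2$, hence $g^n_{x_0}$ agrees with $L^n$ outside $\bigcup_{j} (L^{\cdot})^{-1}(W_2) \cap \cdots$ — one must track how these supports are dragged around by the intermediate $L$'s and confirm the composite perturbation is supported in a region that, after at most $n$ iterations, reduces to a neighborhood of the orbit of $\theta_0$, which is just $\{\theta_0\}$ since $L\theta_0 = \theta_0$. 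I would handle this by working at the level of the differential equation \eqref{e.EDO}: since $\dot u_1 = 0$ and the $u_2$-equation vanishes outside $W_2$, the flow $\varphi^t$ fixes each fiber line $\{u_1 = \text{const}\}$ and the only nontrivial dynamics is the expansion along $\mathbf v^s$ near $\theta_0$; so $g^n_{x_0}$ is, up to conjugacy, $L^n$ post-composed with a fiber map that expands a neighborhood of $\theta_0$ along $\mathbf v^s$ and is the identity elsewhere — precisely the DA recipe. The remaining routine verification that the unstable cone field of $L^n$ is preserved (so no new stable behavior is created transversally) follows from (P1)–(P2) as above.
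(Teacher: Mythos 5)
Your case split is not the one that actually decides the dichotomy, and this creates a genuine gap. You branch on whether the $\Phi$-orbit of $x_0$ meets $B_\varrho(q)$, but meeting that ball does not force $g^n_{x_0}$ to be a Derived from Anosov: if the orbit only grazes the annulus $\varrho/2<|w-q|<\varrho$ where $0<b<1$, the eigenvalue of $Dg^n_{x_0}$ at $\theta_0$ along $E^{\mathrm c}$ is $\lambda_s^n e^{T\sum_j b(|\Phi^j(x_0)-q|)}$, which can remain $<1$ (or equal $1$), so $\theta_0$ stays a saddle and the map should land in the Anosov alternative even though it is no longer equal to $L^n$. You notice this yourself (``we may have $\lambda_s^n e^{mT}>1$'') but then assign the whole case to the DA branch, and you have no mechanism to force the strict inequality. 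The paper instead branches on whether the fiber semi-conjugacy $h_{x_0}$ (which intertwines $g^n_{x_0}$ with $L^n$ by \eqref{e.semi}) is injective: if every $H^{-1}(x_0,y)$ is a point one gets a conjugacy with $L^n$ outright, and if some $H^{-1}(x_0,y)$ is a nondegenerate interval then $\lambda^c_+(x_0,y)\geqslant 0$, the maximality of the central expansion at $\theta_0$ gives $\|Dg^n_{x_0}|_{E^{\mathrm c}(x_0,\theta_0)}\|\geqslant 1$, and the Kupka--Smale assumption $F_S\in\mathfrak{R}$ upgrades this to a strict inequality, making $\theta_0$ a source. That is the step your argument is missing.

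A second problem is your reduction to ``the standard DA recipe.'' Writing $g^n_{x_0}=\psi^T_{x_0}\circ L\circ\cdots\circ\psi^T_{\Phi(x_0)}\circ L$ and pushing all the perturbations to one end conjugates each factor $\psi^T_w$ by a power of $L$, and $L^{-k}(W_2)$ is stretched along the stable direction of $L$; the composite perturbation is therefore supported in a neighborhood of a segment of $W^s(\theta_0,L^n)$ whose length grows with $n$, not in a small neighborhood of $\theta_0$. So $g^n_{x_0}$ is not literally a localized bump isotopy of $L^n$, and the global DA structure cannot be imported from the textbook construction. The paper proves it by hand: it exhibits exactly two new saddles on $W^s(\theta_0,L^n)$ using concavity of the one-dimensional maps $f_{\Phi^i(x_0)}$ restricted to that leaf, builds a trapping region $N=\mathbb{T}^2\setminus V$ around the source, and verifies hyperbolicity of the attractor $\Lambda^n_{x_0}=\bigcap_i g^{in}_{x_0}(N)$ via invariant cones exploiting the lower-triangular form \eqref{eq:derivative} of $Dg^n_{x_0}$. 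Your cone-field remarks point in the right direction for that last step, but the source claim and the count of the new fixed points need the arguments above, not an appeal to the standard construction.
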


\begin{proof}
Firstly, note that $g^n_{x_0}$ and $L^n$ are semi-conjugated. Indeed, as $x_0 \in \mathrm{Per}_n(\Phi)$ then $h_{\Phi^n(x_0)}=h_{x_0}$ (see~\eqref{e.semi}) and so we have for every $y\in \mathbb{T}^2$
\begin{equation*}
\begin{split}
h_{x_0}\circ g^n_{x_0}(y) & =
h_{\Phi^{n}(x_0)} \circ f_{\Phi^{n-1}(x_0)}  \circ g^{n-1}_{x_0}(y)
=
 L \circ h_{\Phi^{n-1}(x_0)} \circ  \circ g^{n-1}_{x_0}(y)
 =\,\ \cdots\,\, = L^n \circ h_{x_0}(y).
\end{split}
\end{equation*}
Thus, if for every $y\in \mathbb{T}^2$, the interval $H^{-1}(x_0,y)=(x_0,h_{x_0}^{-1}(y))$ is a point, then $y\to H(x_0,y)$ is a conjugation between $g^n_{x_0}$ and  $L^n$, and so $g^n_{x_0}$ is an Anosov diffeomorphism. The remaining case is dealt with on the next lemma.

\begin{lem}\label{l.DA} Take $n \in \mathbb{N}$ and $x_0 \in \mathrm{Per}_n(\Phi)$. If for some $y \in \mathbb{T}^2$ the interval $H^{-1}(x_0,y)$ is non-degenerate, then the diffeomorphism $g^n_{x_0}$ is a Derived from Anosov obtained from $L^n$.
\end{lem}

\noindent \emph{Proof.}
To check that $g^n_{x_0}$ satisfies the standard properties of a Derived from Anosov we will follow the reference~\cite[Pag. 300]{Rob}.

\begin{claim}\label{c.source} $\theta_0$ is a source of $g^n_{x_0}$.
\end{claim}

\begin{proof} Since, by construction, when any expansion exists within $E^c$, the greatest expansion is attained at $\theta_0$, we have that
$$ \Vert D g^n_{x}|_{E^{c}(x,\theta_0)}\Vert  \,\geqslant \,\Vert D g^n_{x}|_{E^{c}(x,y)}\Vert\, , \qquad \forall \, (x,y) \in \mathbb{T}^2\times \mathbb{T}^2 ,\quad \forall \, n \in \mathbb{N}.$$
On the other hand, if $H^{-1}(x_0,y)$ is a non-degenerated interval then $\lambda^c_+(x_0,y) \geqslant 0$ (recall that $\mathcal{E} \subset \mathcal{A}$). As $(x_0,\theta_0)$ is a fixed point of $F_S^n$, the Lyapunov exponent $\lambda^c(F^n_S)(x_0,\theta_0)$ is well defined and satisfies
$$
\lambda^c(F^n_S)(x_0,\theta_0)=
n\, \limsup_{k\,\to\,+\infty}\,\frac1{nk} \,\log \Vert D g^{nk}_{x_0}|_{E^{c}(x_0,\theta_0)}\Vert
=n\,\lambda^c_+(F_S)(x_0,\theta_0)
 \geqslant  n\, \lambda^c_+(F_S)(x_0,y) \geqslant 0.
$$
Thus
$\Vert D g^n_{x_0}|_{E^{c}(x_0,\theta_0)}\Vert\,\geqslant \,1$.
But, as $F_S \in \mathfrak{R}$, one must have
$\| D g^n_{x_0}|_{E^{c}(x_0,\theta_0)}\|> 1$, and so $\theta_0$ is indeed a source of $g^n_{x_0}$.
\end{proof}

\smallskip

\begin{claim}\label{c.newsaddles} The map $g^n_{x_0}$ has three fixed points in $W^s(\theta_0,L^n)$, namely $\theta_0$ and two new saddle points $\theta_1$ and $\theta_2$, one in each connected component of $W^s(\theta_0,L^n) \setminus \{\theta_0\}$.
\end{claim}

\smallskip

\begin{proof}
We recall the ball $B_{\varrho}(q)\subset \mathbb{T}^2$ and the subset $W_2\subset \mathbb{T}^2$ introduced in Subsection~\ref{sse.construction}.
Since $H^{-1}(x_0,y)$ is non-degenerate  interval, there exists $0 \leqslant i \leqslant n$ such that $\Phi^i(x_0)\in B_{\varrho}(q)$. By construction, outside the set $\{\Phi^i(x_0)\}\times W_2$ introduced in Subsection~\ref{sse.construction}, the slope of the graph of the restriction of the map
$$g^n_{\Phi^i(x_0)} \colon \,\Big\{\Phi^i(x_0)\Big\} \times \mathbb{T}^2 \quad \to \quad \Big\{\Phi^i(x_0)\Big\} \times \mathbb{T}^2$$
to $W^s(\theta_0, L^n)$ is smaller than one. Therefore, there must exist two fixed points by the dynamics $g^n_{\Phi^i(x_0)}$, say $\theta^i_1$ and $\theta^i_2$, on each side of $\theta_0$ inside $W^s(\theta_0, L^n)$. The points $\theta_1$ and $\theta_2$ we were looking for are obtained intersecting the orbits of $\theta^i_1$ and $\theta^i_2$ with the fibre $\{x_0\}\times \mathbb{T}^2$.
\end{proof}

Note that both $(x_0,\theta^i_1)$ and $(x_0,\theta^i_2)$ are hyperbolic periodic points of $F_S$. Furthermore, the fixed points $\theta^i_1$ and $\theta^i_2$ of $g^n_{\Phi^i(x_0)}$ in $\Big\{\Phi^i(x_0)\Big\} \times\mathbb{T}^2$ are the unique saddles inside this set fixed by $g^n_{\Phi^i(x_0)}$.
Indeed, denoting by $[\theta_0, w_2]\subset \Big\{\Phi^i(x_0)\Big\} \times\mathbb{T}^2$ the closure of the connected component of $\big(W^s(\theta_0,L^n)\setminus\{\theta_0\}\big)\cap W_2$ containing the saddle $\theta^i_1$ (the corresponding notation for $\theta^i_2$ is $[-w_2, \theta_0]$) and identifying all the fibers $\Big\{\Phi^j(x_0)\Big\}\times\mathbb{T}^2$ with $\mathbb{T}^2$, we deduce that each one-dimensional maps
$$f_{\Phi^i(x_0)}\colon \, [\theta_0, w_2] \quad \to \quad [\theta_0, w_2]$$
for $i=0,1,\cdots,n-1$ is a preserving orientation concave function (including, possibly, affine components, as happens when $\Phi^j(x_0) \notin B_{\varrho}(q)$) such that
\begin{itemize}
\item $f_{\Phi^i(x_0)}(\theta_0)=\theta_0$;
\smallskip
\item  $f_{x_0}(w_2)=f_{\Phi^j(x_0)}(w_2)$, for every $j \in \{0,\cdots,n-1\}$;
\smallskip
\item there is $i \in \{0,1,\cdots,n-1\}$ such that the restriction $f_{\Phi^i(x_0)}|_{(\theta_0, w_2)}$ has a unique (saddle) fixed point (different from $\theta_0$). 
\end{itemize}
Similarly, for every $i=0,1,\cdots,n-1$, the map
$$f_{\Phi^i(x_0)}\colon \, [-w_2,\theta_0]\quad \to \quad [-w_2,\theta_0]$$
preserves orientation and is concave, which ensures the existence of a unique saddle $\theta^i_2$ inside $(-w_2,\theta_0)$ which is fixed by $g^n_{\Phi^i(x_0)}$. Consequently, apart from $\theta_0$, the points $\theta^i_1$ and $\theta^i_2$ are the unique fixed points of $g^n_{\Phi^i(x_0)}$ in $\Big\{\Phi^i(x_0)\Big\} \times\mathbb{T}^2$.

\smallskip

\begin{claim}\label{claim:attractor}
The non-wandering set of  $g^n_{x_0}$ is given by $\Omega(g^n_{x_0}) = \{\theta_0\}\cup \Lambda^n_{x_0}$, where $\Lambda^n_{x_0}$ is a hyperbolic attractor of topological dimension one.
\end{claim}

\smallskip

\noindent \emph{Proof.}
Note that, regarding the splitting $E^u(L)\oplus E^s(L)$ of the tangent space $T\,\mathbb{T}^2$, the derivative of each $f_{\Phi^i(x_0)}$ is determined by a matrix $Df_{\Phi^i(x_0)}=(a_{ij}) $, which is lower triangular since $a_{11}=\lambda_u$  and $a_{12} = 0$ for the whole family $(f_x)_{x \, \in \, \mathbb{T}^2}$. Thus,
\begin{equation}\label{eq:derivative}
D g^n_{x_0}(y) = \begin{pmatrix}
(\lambda_u)^n &  0\\
b_{21}(y) &  b_{22}(y)
 \end{pmatrix}
\end{equation}
with $0 < b_{22} < 1$ at the saddle fixed point $\theta_1$ and $\theta_2$.
Moreover, we can assume $b_{22}(\theta_1)$,  $b_{22}(\theta_2)$ $\le \lambda^n_s$.
Let $ V \subset \mathbb{T}^2$ be a neighborhood of $\theta_0$ not containing $\theta_1$ and $\theta_2$, and such that
\begin{enumerate}
\item $b_{22} > 1$ for $w \in V \quad$ (that is, $g^n_{x_0}$ is an expansion along $E^c$ in $V$);
\smallskip
\item $0 < b_{22} < 1$ for $w \notin g^n_{x_0}(V) \quad $ (that is, $g^n_{x_0}$ is a contraction along $E^c$ outside $g^n_{x_0}(V)$);
\smallskip
\item $g^n_{x_0}(V) \supset V$.
\end{enumerate}
We observe that such a neighborhood $V$ exists (cf. Exercise 7.36 of \cite{Rob})
and $V\subset W^u(\theta_0,g^n_{x_0})$. So it is a local unstable manifold of $\theta_0$ and $W^u(\theta_0,g^n_{x_0}) = \bigcup_{i \, \geqslant \, 1}\, g^{i\,n}_{x_0}(V).$
Let $N = \mathbb{T}^2\setminus V$. Then $N$ is a trapping region because $g^n_{x_0}(V) \supset V$. Set
$$\Lambda^n_{x_0} \,\eqdef\, \bigcap_{i\, \geqslant \, 1}\,g^{i\,n}_{x_0}(N).$$
This is an attracting set and $\Lambda^n_{x_0} = \mathbb{T}^2 \setminus W^u(\theta_0,g^n_{x_0}).$ Thus, $\Omega(g^n_{x_0}) = \{\theta_0\} \cup \Lambda^n_{x_0}$.

\smallskip

We are left to show that $\Lambda^n_{x_0}$ is hyperbolic. Due to \eqref{eq:derivative}, $E^s(L) = E^c(F_S)$ is an invariant bundle and every vector in this bundle is contracted by $D_z\,g^n_{x_0}$ for $z \in \Lambda^n_{x_0}$. This is precisely the stable bundle on $\Lambda^n_{x_0}$. Let $C>0$ be a global upper bound of $|b_{21}|$. Consider $\alpha = C\,[(\lambda_u)^n - (\lambda_s)^n]^{-1}$ and take the cones
$$\mathcal{C}\,\eqdef\, \big\{(v_1,v_2) \in E^u(L)\oplus E^s(L): \,\,\vert v_2\vert  < \alpha\,\vert v_1\vert\big \}·$$
Then it can be checked, using the lower triangular nature of the derivative of $f_x$, that these cones are invariant and
$$E^u(g^n_{x_0}, z) = \bigcap^{\infty}_{i=1}\, D_{g^{-j n}_{x_0}(z)}\, g^{j n}_{x_0}\,\,\Big(\mathcal{C}\,\big(g^{-j n}_{x_0}(z)\big)\Big)$$
is an invariant bundle on which the derivative is an expansion for every point $z \in \Lambda^n_{x_0}$. This provides the unstable bundle on $\Lambda^n_{x_0}$, hence completing the hyperbolic splitting at the points of this set. This ends the proofs of the last claim, of Lemma~\ref{l.DA} and of Proposition~\ref{prop.DA}.
\end{proof}

\begin{cor}\label{cor:periodic} For every $n \in \mathbb{N}$ and every $(x,y) \in \mathrm{Per}_n(\Phi \times L)$, we have
$$1 \,\leqslant \,\# \big(H^{-1}(x,y) \cap\,\, \mathrm{Per}_n(F_S)\big) \,\leqslant \,3.$$
In particular,
$$\# \mathrm{Per}_n(\Phi\times L) \,\leqslant \# \mathrm{Per}_n(F_S) \,\leqslant \,3 \,\,\#\mathrm{Per}_n(\Phi\times L)  \qquad \forall \,n \in \mathbb{N}$$
thus
$$
\lim_{n\to +\infty}\frac{1}{n}\,\log \#\, \mathrm{Per}_n(F_S)=h_{\mathrm{top}}(F_S).
$$
\end{cor}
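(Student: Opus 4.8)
The plan is to combine the two preceding results about periodic fibers into a two-sided count of $\mathrm{Per}_n(F_S)$ and then take logarithmic growth rates. First I would recall that the semi-conjugation $H$ maps $\mathrm{Per}_n(F_S)$ into $\mathrm{Per}_n(\Phi\times L)$, since $H\circ F_S=(\Phi\times L)\circ H$ forces $H(\mathrm{Per}_n(F_S))\subset\mathrm{Per}_n(\Phi\times L)$; hence the fibers $\{H^{-1}(x,y):(x,y)\in\mathrm{Per}_n(\Phi\times L)\}$ cover $\mathrm{Per}_n(F_S)$. For the lower bound I invoke the Corollary following Lemma~\ref{l.p-classes}: each fiber $H^{-1}(x,y)$ over a point of $\mathrm{Per}_n(\Phi\times L)$ is a (possibly degenerate) compact interval in a center manifold, on which $F_S^n$ acts as a homeomorphism, so Brouwer gives at least one fixed point, and these fixed points lie in distinct fibers; thus $\#\mathrm{Per}_n(\Phi\times L)\leqslant\#\mathrm{Per}_n(F_S)$.

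Next I would establish the upper bound $\#\big(H^{-1}(x,y)\cap\mathrm{Per}_n(F_S)\big)\leqslant 3$ for each $(x,y)\in\mathrm{Per}_n(\Phi\times L)$. Write $(x,y)=(x_0,y_0)$ with $x_0\in\mathrm{Per}_n(\Phi)$. If the fiber is degenerate there is exactly one point. If it is non-degenerate, Proposition~\ref{prop.DA} and Lemma~\ref{l.DA} say that $g^n_{x_0}$ is a Derived-from-Anosov map obtained from $L^n$, and by Claims~\ref{c.source}, \ref{c.newsaddles} and \ref{claim:attractor} its non-wandering set restricted to the relevant center leaf $W^s(\theta_0,L^n)$ consists of the source $\theta_0$ and exactly two new saddles $\theta_1,\theta_2$. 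Since the $F_S^n$-fixed points inside $H^{-1}(x_0,y_0)$ project (via the conjugacy outside the perturbation region and the structure of the one-dimensional concave fiber maps analyzed after Claim~\ref{c.newsaddles}) into this finite set of $g^n_{x_0}$-fixed points on the corresponding stable segment, there are at most three of them; here I use the Kupka–Smale hypothesis $F_S\in\mathfrak R$ to guarantee all these periodic points are hyperbolic and hence isolated, so the count is finite and bounded by $3$. Combining, $\#\mathrm{Per}_n(\Phi\times L)\leqslant\#\mathrm{Per}_n(F_S)\leqslant 3\,\#\mathrm{Per}_n(\Phi\times L)$.

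Finally I would pass to growth rates. Taking $\frac1n\log$ of the sandwich inequality and letting $n\to+\infty$, the factor $3$ contributes $\frac1n\log 3\to 0$, so
\begin{equation*}
\lim_{n\to+\infty}\frac1n\log\#\mathrm{Per}_n(F_S)=\lim_{n\to+\infty}\frac1n\log\#\mathrm{Per}_n(\Phi\times L),
\end{equation*}
provided the limit on the right exists. Since $\Phi\times L$ is an Axiom~A (indeed Anosov, when $\Phi$ is) diffeomorphism, Bowen's theorem gives $\lim_{n\to+\infty}\frac1n\log\#\mathrm{Per}_n(\Phi\times L)=h_{\mathrm{top}}(\Phi\times L)$, and by Proposition~\ref{prop.Shub-NY}(a) this equals $h_{\mathrm{top}}(F_S)$. (If $\Phi$ is only Anosov and not linear, one uses the analogous equidistribution/counting result for Anosov diffeomorphisms on the factor and multiplies with the linear count for $L$.) Hence $\lim_{n\to+\infty}\frac1n\log\#\mathrm{Per}_n(F_S)=h_{\mathrm{top}}(F_S)$, which is exactly Theorem~\ref{teo.A}(a).

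The main obstacle is the upper bound $\leqslant 3$: one must be careful that every $F_S^n$-periodic point in a non-degenerate fiber really does correspond to one of the three fixed points of the DA map $g^n_{x_0}$ on the stable segment, and that no extra periodic points hide either in the interior of the fiber's center arc away from $W^s(\theta_0,L^n)$ or at the endpoints. This is handled by the concavity/orientation analysis of the one-dimensional fiber maps $f_{\Phi^i(x_0)}$ carried out after Claim~\ref{c.newsaddles} together with $\Omega(g^n_{x_0})=\{\theta_0\}\cup\Lambda^n_{x_0}$ from Claim~\ref{claim:attractor}, which confines all recurrence — hence all periodic points — of the fiber dynamics to the attractor and the source, leaving precisely the three listed fixed points on the relevant arc.
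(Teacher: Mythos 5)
Your lower bound and the final passage to growth rates match the paper. The genuine gap is in the upper bound, in the case of a non-degenerate fiber $H^{-1}(x_0,y_0)$ with $y_0\neq\theta_0$. Claims~\ref{c.newsaddles} and the concavity analysis that follows them only describe the fixed points of $g^n_{x_0}$ on the two arcs of $W^{\mathrm s}(\theta_0,L^n)$ adjacent to $\theta_0$; they say nothing about a fiber over a periodic point $y_0\neq\theta_0$, which by Claim~\ref{cl.16} sits inside a \emph{different} leaf (or a far-away arc) of the foliation $\mathcal F$, where none of $\theta_0,\theta_1,\theta_2$ lives. A Derived-from-Anosov map $g^n_{x_0}$ has many fixed points on the attractor $\Lambda^n_{x_0}$, and the statement $\Omega(g^n_{x_0})=\{\theta_0\}\cup\Lambda^n_{x_0}$ from Claim~\ref{claim:attractor} does not by itself bound how many of them can lie in a single fiber arc: the attractor meets each center arc in a set that could a priori contain several fixed points. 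So the sentence ``the $F_S^n$-fixed points inside $H^{-1}(x_0,y_0)$ project into this finite set of $g^n_{x_0}$-fixed points on the corresponding stable segment'' does not follow from the results you cite.

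The paper closes exactly this gap with Claim~\ref{claim:periodic}, by a different mechanism: if $y_0\neq\theta_0$ and $H^{-1}(x_0,y_0)$ were a non-degenerate interval, then (using $F_S\in\mathfrak R$, so all periodic points are hyperbolic) $F_S^n$ restricted to this interval is an orientation-preserving Morse--Smale homeomorphism whose two endpoints are fixed; both endpoints lie in $\Lambda^n_{x_0}$, where the central direction is uniformly contracted, hence both are sinks of the interval dynamics, which forces an interior fixed source --- but a source along $E^{\mathrm c}$ cannot belong to $\Lambda^n_{x_0}$. The conclusion is that such fibers are in fact \emph{degenerate}, so the only fiber that can carry more than one periodic point is $H^{-1}(x_0,\theta_0)$, which carries exactly three. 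Your argument needs this (or an equivalent) step; without it the bound $\#\big(H^{-1}(x,y)\cap\mathrm{Per}_n(F_S)\big)\leqslant 3$ is not established for $y\neq\theta_0$.
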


\begin{proof}
From Proposition~\ref{prop.DA}, given $x \in \mathrm{Per}_n(\Phi)$, either $g^n_{x}$ is Anosov or a Derived from Anosov.
In the former case, the interval $H^{-1}(x,y)$ is a point. In the latter, the interval $H^{-1}(x,\theta_0)$ associated to the fixed point $(x,\theta_0)$ has exactly three fixed points by $g^n_{x}$.  We also must to estimate the cardinality of $H^{-1}(x,y)\cap \mathrm{Per}_n(F_S)$ when $(x,y)$  is different from of the fixed point $(x,\theta_0)$. The last equality is due to Proposition~\ref{prop.Shub-NY}.

\smallskip

\begin{claim}\label{claim:periodic}
Let $(x,y) \in \mathrm{Per}_n(\Phi\times L)$ and suppose that $g^n_{x}$ is a Derived from Anosov. If $y\neq \theta_0$, then $H^{-1}(x,y)$ is a point.
\end{claim}

\begin{proof}
Suppose, on the contrary, that $H^{-1}(x,y)$ is a non-degenerated interval. Then
$$F_S^n:\, H^{-1}(x,y)\quad \to \quad H^{-1}(x,y)$$
is a Morse-Smale diffeomorphism of this interval (recall that $F_S \in \mathfrak{R}$). Since $F_S$ is a preserving orientation map, the boundary points of the interval $H^{-1}(x,y)$, say $(x,a_1)$ and $(x,a_2)$, are necessarily fixed by $F_S^n$. This implies, using the fact that $H^{-1}(x,\theta_0)\cap  H^{-1}(x,y)=\emptyset$, that
$$\Big\{(x,a_1), \, (x,a_2)\Big\} \subset \{x\}\times\Omega(g^n_{x}) \setminus \{(x,\theta_0)\} = \{x\}\times \Lambda^n_{x}$$
and therefore $(x,a_1)$ and $(x,a_2)$ are two sinks of $F_S^n|_{H^{-1}(x,y)}$. This forces the existence of a third point
$$(x,a_3) \,\,\in \,\,H^{-1}(x,y) \setminus \Big\{(x,a_1),\,(x,a_2)\Big\}$$
such that $F_S^n(x,a_3)=(x,a_3)$ and $(x,a_3)$ is a source of $F_S^n|_{H^{-1}(x,y)}$. But $(x,a_3)$ also belongs to $\{x\}\times\Omega(g^n_{x}) \setminus \{(x,\theta_0)\} = \{x\}\times \Lambda^n_{x}$, so this conclusion contradicts Claim~\ref{claim:attractor}.
\end{proof}

Finally, we observe that, for every $n \in \mathbb{N}$,
\begin{equation}\label{e.union}
\mathrm{Per}_n(F_S) \,=\, H^{-1} \big(\mathrm{Per}_n(\Phi \times L)\big) \cap \mathrm{Per}_n(F_S) \,=\, \bigcup_{(x,y)\,\in \,\mathrm{Per}_n(\Phi \times L)} \,\,H^{-1}(x,y) \cap \mathrm{Per}_n(F_S).
\end{equation}
Thus, $\,\, \#\, \mathrm{Per}_n(F_S)\,\leqslant \#\, \mathrm{Per}_n(F_S) \leqslant \,3 \,\,\#\mathrm{Per}_n(\Phi\times L)$ for every $n \in \mathbb{N}$, as claimed.
\end{proof}

\smallskip
The proof of Theorem~\ref{teo.A} is complete.


\smallskip

\begin{rem}\label{re.Kal}

Every $F_S$ belonging to the residual $\mathfrak{R}$
satisfies the conditions:
\begin{enumerate}
\item[(1)] All the periodic points of $f$ are hyperbolic (in particular, the set $\mathrm{Per}_n(f)$ is finite for every positive integer $n$).
\medskip
\item[(2)] There exists $K > 0$ such that $\,\,\frac{1}{n}\,\log \#\, \mathrm{Per}_n(f)  \leqslant \log K\,\,$ for every $n \in \mathbb{N}$.
\end{enumerate}
So, the class of skew product we consider provides a local residual subset where both properties hold. We note that according to~\cite{Ka1999} the set of $C^r$ diffeomorphisms for which the properties (1) and (2) are valid is dense in the space of $C^r$ diffeomorphisms, $r \geqslant 1$.
\end{rem}

\subsection{Proof of Theorem~\ref{teo.A} (b)}\label{prop.weak-limit}

We now prove that the measure $\mu_{\mathrm{max}}$ is the weak$^*$ limit of the sequence of probability measures on $\mathbb{T}^2\times \mathbb{T}^2$
\begin{equation*}\label{eq:distribution}
n \in \mathbb{N} \quad \mapsto \quad \mu_n\,\,\eqdef\,\, \frac1{\#\,\mathrm{Per}_n(F_S)} \, \sum_{(x,y)\,\in\,\mathrm{Per}_n(F_S)}\,\delta_{(x,y)}.
\end{equation*}
To do it, we will show that $\mu_{\mathrm{max}}$ is the unique weak$^*$ limit point of the sequence $(\mu_n)_n$.

Consider the sequence of probabilities $(\nu_n)_{n \, \in \, \mathbb{N}}$ on $\mathbb{T}^2\times \mathbb{T}^2$ defined by
$$n \in \mathbb{N} \quad \mapsto \quad  \nu_n\,\,\eqdef\,\, \frac1{\#\,\mathrm{Per}_n(\Phi \times L)} \,
 \sum_{(x,y)\,\in\,\mathrm{Per}_n(\Phi \times L)}\,\delta_{(x,y)}. $$
We know that this sequence of measures converges in the weak$^*$ topology to the  measure of maximal entropy $\nu_{\mathrm{max}}$ of $\Phi\times L$.


\begin{prop}\label{l.calor} The sequence $\big(\,H_*(\mu_n)\,\big)_{n \, \in \, \mathbb{N}}$ converges to $\nu_{\mathrm{max}}$ in the weak$^*$ topology.
\end{prop}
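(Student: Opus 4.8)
The plan is to show that $H_*(\mu_n)$ is obtained from $\nu_n$ by redistributing a vanishingly small amount of mass, and then invoke the known convergence $\nu_n \to \nu_{\mathrm{max}}$. Since $H\circ F_S=(\Phi\times L)\circ H$, the image $H\big(\mathrm{Per}_n(F_S)\big)$ is contained in $\mathrm{Per}_n(\Phi\times L)$, and by Corollary~\ref{cor:periodic} together with \eqref{e.union} the multiplicities $m_n(P)\eqdef \#\big(H^{-1}(P)\cap \mathrm{Per}_n(F_S)\big)$ satisfy $1\leqslant m_n(P)\leqslant 3$ for every $P\in \mathrm{Per}_n(\Phi\times L)$ and $\sum_P m_n(P)=\#\mathrm{Per}_n(F_S)$. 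Writing $N_n\eqdef \#\mathrm{Per}_n(\Phi\times L)$, $M_n\eqdef \#\mathrm{Per}_n(F_S)$ and $S_n\eqdef\{P\in\mathrm{Per}_n(\Phi\times L):m_n(P)>1\}$, pushing forward $\mu_n$ and grouping periodic points of $F_S$ by their $H$-image yields the exact identity
$$H_*(\mu_n)\,=\,\frac1{M_n}\sum_{P\,\in\,\mathrm{Per}_n(\Phi\times L)}m_n(P)\,\delta_P\,=\,\frac{N_n}{M_n}\,\nu_n\,+\,\frac1{M_n}\sum_{P\,\in\,S_n}\big(m_n(P)-1\big)\,\delta_P.$$

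The core step is to control $\#S_n$. By Proposition~\ref{prop.DA} and Claim~\ref{claim:periodic}, a periodic point $P=(x,y)$ of $\Phi\times L$ has $m_n(P)>1$ only when $g^n_x$ is a Derived from Anosov and $y=\theta_0$; hence $S_n\subset \mathrm{Per}_n(\Phi)\times\{\theta_0\}$ and $\#S_n\leqslant \#\mathrm{Per}_n(\Phi)$. Because $L$ is a hyperbolic toral automorphism we have $\#\mathrm{Per}_n(\Phi\times L)=\#\mathrm{Per}_n(\Phi)\cdot\#\mathrm{Per}_n(L)$ with $\#\mathrm{Per}_n(L)=|\det(L^n-\mathrm{Id})|\to+\infty$, so
$$\frac{\#S_n}{N_n}\,\leqslant\,\frac{\#\mathrm{Per}_n(\Phi)}{\#\mathrm{Per}_n(\Phi)\,\#\mathrm{Per}_n(L)}\,=\,\frac1{\#\mathrm{Per}_n(L)}\,\longrightarrow\,0\qquad(n\to+\infty).$$
From $m_n\geqslant 1$ we get $M_n\geqslant N_n$, while $M_n-N_n=\sum_{P\in S_n}(m_n(P)-1)\leqslant 2\,\#S_n$; therefore $\big|\tfrac{N_n}{M_n}-1\big|=\tfrac{M_n-N_n}{M_n}\leqslant \tfrac{2\,\#S_n}{N_n}\to 0$, and the total mass of the second term above is also at most $\tfrac{2\,\#S_n}{M_n}\leqslant \tfrac{2\,\#S_n}{N_n}\to 0$.

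Combining these estimates, for every $\varphi\in C^0(\mathbb{T}^2\times\mathbb{T}^2,\mathbb{R})$,
$$\Big|\int\varphi\,d\big(H_*\mu_n\big)-\int\varphi\,d\nu_n\Big|\,\leqslant\,\Big|\frac{N_n}{M_n}-1\Big|\,\|\varphi\|_\infty+\frac{2\,\#S_n}{N_n}\,\|\varphi\|_\infty\,\longrightarrow\,0,$$
and since $\nu_n\to\nu_{\mathrm{max}}$ in the weak$^*$ topology (hyperbolicity of $\Phi\times L$), we conclude $H_*(\mu_n)\to\nu_{\mathrm{max}}$. I expect the only delicate point to be the localisation $S_n\subset \mathrm{Per}_n(\Phi)\times\{\theta_0\}$ together with the exponential growth of $\#\mathrm{Per}_n(L)$: these are exactly what forces the exceptional mass to disappear, and they rest on the fibrewise analysis of Section~\ref{se.proof-Theorem-A} (Proposition~\ref{prop.DA}, Claims~\ref{claim:attractor} and~\ref{claim:periodic}); the remainder is routine bookkeeping of masses.
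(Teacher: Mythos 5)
Your argument is correct, but it follows a genuinely different route from the paper's. The paper only uses the crude two-sided bound $1\leqslant \#\big(H^{-1}(P)\cap \mathrm{Per}_n(F_S)\big)\leqslant 3$ from Corollary~\ref{cor:periodic} to sandwich $\tfrac13\,\nu_n\leqslant H_*(\mu_n)\leqslant 3\,\nu_n$; it then passes to a convergent subsequence and invokes two auxiliary lemmas (Lemmas~\ref{l.1} and~\ref{l.2}) to conclude that any weak$^*$ limit of $H_*(\mu_n)$ is equivalent to $\nu_{\mathrm{max}}$ and hence, by ergodicity of $\nu_{\mathrm{max}}$, equal to it. You instead exploit the finer localisation coming from Proposition~\ref{prop.DA} and Claim~\ref{claim:periodic} — that multiplicity larger than one can only occur over points of $\mathrm{Per}_n(\Phi)\times\{\theta_0\}$ — together with $\#\mathrm{Per}_n(L)=|\det(L^n-\mathrm{Id})|\to+\infty$, to show that $H_*(\mu_n)-\nu_n$ tends to zero in total variation. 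This buys a stronger and more self-contained conclusion: you bypass the ergodicity of $\nu_{\mathrm{max}}$ and the subsequence/equivalence argument entirely, at the cost of needing the extra (but standard) fact about the growth of $\#\mathrm{Per}_n(L)$ and of leaning on Claim~\ref{claim:periodic} not merely for the upper bound $3$ but for the precise location of the exceptional fibers. Both proofs are valid; the paper's is more robust (it would survive if the exceptional set were a positive proportion of $\mathrm{Per}_n(\Phi\times L)$, as long as the multiplicities stayed uniformly bounded), while yours gives the sharper quantitative statement that $H_*(\mu_n)$ and $\nu_n$ are asymptotically indistinguishable as measures.
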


To prove this assertion it is sufficient to show that the weak$^*$ limit of any convergent sub-sequence of $\big(\,H_*(\nu_n)\,\big)_n$ is equal to $\nu_{\mathrm{max}}$. This will be a consequence of the following two statements.

\begin{lem}\label{l.1}
Let $f: X\to  X$ be a continuous map defined on a compact metric space $(X,d)$. Consider two sequences of $f$-invariant Borel probability measures $(\eta_k)_{k \, \in \, \mathbb{N}}$ and $(\zeta_k)_{k \, \in \, \mathbb{N}}$ on $X$ satisfying
\begin{equation}\label{e.noite}
\exists \,C>1: \quad \quad C^{-1}\,\zeta_k\,\,\leqslant \,\,\eta_k\, \, \leqslant \,\, C\,\zeta_k \qquad \forall \, k \in \mathbb{N}.
\end{equation}
Assume that $(\zeta_k)_{k \, \in \, \mathbb{N}}$ and $(\eta_k)_{k \, \in \, \mathbb{N}}$ converge in the weak$^*$ topology to probability measures $\zeta$ and $\eta$ respectively. Then
$C^{-1}\,\zeta\,\,\leqslant \,\,\eta\,\,  \leqslant\, \, C\,\zeta$. In particular, $\zeta$ and $\eta$ are equivalent.
\end{lem}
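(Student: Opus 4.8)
The plan is to reduce the statement to the elementary order characterization of Borel measures on a compact metric space: for finite Borel measures $\mu,\nu$ on $X$ one has $\mu\leqslant\nu$ (as measures, i.e. $\mu(A)\leqslant\nu(A)$ for every Borel set $A$) if and only if $\int\varphi\,d\mu\leqslant\int\varphi\,d\nu$ for every $\varphi\in C^0(X,\mathbb{R})$ with $\varphi\geqslant 0$. The hypothesis \eqref{e.noite} is precisely a pair of such measure inequalities, and the conclusion is the analogous pair for the limits; so it suffices to push the inequalities through integration against non-negative continuous functions and then through the weak$^*$ limit, both of which are order-preserving operations.

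Concretely, first I would fix an arbitrary $\varphi\in C^0(X,\mathbb{R})$ with $\varphi\geqslant 0$ and integrate \eqref{e.noite} against $\varphi$, obtaining $C^{-1}\int\varphi\,d\zeta_k\leqslant\int\varphi\,d\eta_k\leqslant C\int\varphi\,d\zeta_k$ for every $k\in\mathbb{N}$. Since $\varphi$ is bounded and continuous and the sequences converge in the weak$^*$ topology, letting $k\to+\infty$ yields $C^{-1}\int\varphi\,d\zeta\leqslant\int\varphi\,d\eta\leqslant C\int\varphi\,d\zeta$. To upgrade this from continuous test functions to Borel sets I would use that $X$ is a compact metric space, so every Borel probability measure is outer regular: given an open set $U\subset X$, the functions $\varphi_j(x)\eqdef\min\{1,\,j\,d(x,X\setminus U)\}$ are non-negative, continuous, and increase pointwise to $\chi_U$, and the monotone convergence theorem applied to the displayed inequality gives $C^{-1}\zeta(U)\leqslant\eta(U)\leqslant C\zeta(U)$. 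Taking the infimum over open sets $U$ containing a given Borel set $A$ and invoking outer regularity of $\zeta$ and $\eta$ then extends this to $C^{-1}\zeta(A)\leqslant\eta(A)\leqslant C\zeta(A)$ for every Borel set $A$, which is exactly $C^{-1}\zeta\leqslant\eta\leqslant C\zeta$.

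Equivalence is then immediate: if $\zeta(A)=0$ then $\eta(A)\leqslant C\,\zeta(A)=0$, and symmetrically $\eta(A)=0$ forces $\zeta(A)\leqslant C\,\eta(A)=0$, so $\zeta$ and $\eta$ have the same null sets. I do not expect a genuine obstacle in this lemma; the only point requiring a little attention is bookkeeping the direction of the inequalities when integrating against non-negative functions and when passing to the weak$^*$ limit. Note also that the $f$-invariance of the measures is not used in the argument — it is merely recorded in the statement because it is automatically inherited by the weak$^*$ limits and is needed when the lemma is applied.
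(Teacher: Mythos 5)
Your proposal is correct and follows essentially the same route as the paper: integrate the measure inequality against non-negative continuous functions, pass to the weak$^*$ limit, approximate $\chi_U$ from below by Urysohn-type functions (your $\varphi_j$ are exactly the paper's $g_k$ built from the closed sets $F_k=\{x: d(x,X\setminus U)\geqslant 1/k\}$) using monotone convergence, and finish with outer regularity. Your closing remark that $f$-invariance plays no role in the argument is accurate.
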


\begin{lem}\label{l.2}
If $\eta$ and $\zeta$ are $f$-invariant probability measures on $X$ such that $\eta$ is ergodic and $\zeta$ is absolutely continuous with respect to $\eta$, then $\zeta=\eta$.
\end{lem}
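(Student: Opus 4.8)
The plan is to run the classical argument that an absolutely continuous invariant measure which is dominated by an ergodic invariant measure must coincide with it, using the Birkhoff ergodic theorem. Since $X$ is a compact metric space, two Borel probability measures on $X$ that assign the same integral to every continuous function must coincide, so it suffices to show $\int g\, d\zeta = \int g\, d\eta$ for every $g \in C^0(X,\mathbb{R})$.

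First I would fix $g \in C^0(X,\mathbb{R})$ and set $S_n(x) \eqdef \frac1n \sum_{j=0}^{n-1} g\big(f^j(x)\big)$. Applying Birkhoff's theorem to the ergodic system $(X,f,\eta)$, I obtain a Borel set $G \subseteq X$ with $\eta(G)=1$ on which $S_n(x) \to \int g\, d\eta$. The key step is then to transfer this to $\zeta$: since $\zeta \ll \eta$ and $\eta(X \setminus G)=0$, we also have $\zeta(X \setminus G)=0$, so $S_n \to \int g\, d\eta$ holds $\zeta$-almost everywhere as well. I expect this transfer to be the only genuine point of the proof: it is exactly where absolute continuity is used, and the $f$-invariance of $\zeta$ by itself would not suffice, since $\eta$ could be merely one of several ergodic components of $f$.

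Next I would integrate this convergence against $\zeta$. Because $|S_n| \leqslant \|g\|_\infty$ uniformly, the dominated convergence theorem gives $\int S_n\, d\zeta \to \int\big(\int g\, d\eta\big)\, d\zeta = \int g\, d\eta$. On the other hand, the $f$-invariance of $\zeta$ forces $\int g\circ f^j\, d\zeta = \int g\, d\zeta$ for every $j$, whence $\int S_n\, d\zeta = \int g\, d\zeta$ for all $n$. Comparing the two identities yields $\int g\, d\zeta = \int g\, d\eta$, and since $g$ was arbitrary, $\zeta = \eta$.

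As an alternative I would record the Radon–Nikodym route: let $\varphi \eqdef d\zeta/d\eta$; using that both measures are $f$-invariant one checks that $\varphi \circ f = \varphi$ holds $\eta$-almost everywhere, and ergodicity of $\eta$ then forces $\varphi$ to be $\eta$-a.e. equal to the constant $\int \varphi\, d\eta = 1$, i.e. $\zeta = \eta$.
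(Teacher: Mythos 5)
Your argument is correct and is essentially the paper's own proof: both apply Birkhoff's ergodic theorem to the ergodic measure $\eta$, use absolute continuity to transfer the $\eta$-a.e.\ convergence of the time averages to a $\zeta$-a.e.\ statement, and then integrate against $\zeta$ using its $f$-invariance. The only cosmetic difference is that the paper tests against indicator functions of Borel sets while you test against continuous functions and invoke that these determine a Borel probability measure on a compact metric space.
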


Let us postpone for the moment the proofs of these lemmas to complete the proof of Proposition~\ref{l.calor}.

\begin{proof}[Proof of Proposition~\ref{l.calor}]

Using the fact that for every $(x,y)\in \mathbb{T}^2\times \mathbb{T}^2$ we have $H_*\delta_{(x,y)} = \delta_{H(x,y)}$, we deduce from \eqref{e.union} that the $( \Phi\times L)$-invariant probability measure $H_*(\mu_n)$ satisfies
\begin{equation*}
\begin{split}
H_*(\mu_n)&=\frac1{\#\,\mathrm{Per}_n(F_S)} \sum_{(x,y)\,\in\,\mathrm{Per}_n(\Phi\times L)}\,\#\,\big( H^{-1}(x,y)\cap \mathrm{Per}_n(F_S)\big)\,\,\delta_{(x,y)} \\
& = \left(\frac{\#\,\mathrm{Per}_n(\Phi\times L)}{\#\,\mathrm{Per}_n(F_S)}\right)\,\, \frac1{\#\,\mathrm{Per}_n(\Phi\times L)} \,
 \sum_{(x,y)\,\in\,\mathrm{Per}_n(\Phi\times L)} \,\#\,\big( H^{-1}(x,y)\cap \mathrm{Per}_n(F_S)\big)\,\,\delta_{(x,y)}.
\end{split}
\end{equation*}
Besides, after Corollary~\ref{cor:periodic} we know that
$$ \forall\,  n \in \mathbb{N}, \quad \quad 1\leqslant\, \#\,\big( H^{-1}(x,y)\cap \mathrm{Per}_n(F_S)\big)\,\leqslant 3 \quad \quad \mbox{and} \quad \quad
\frac1{3} \leqslant  \frac{\#\,\mathrm{Per}_n(\Phi\times L)}{\#\,\mathrm{Per}_n(F_S)} \leqslant 1.$$
Thus,
\begin{equation*}\label{e.ineq}
\forall\,  n \in \mathbb{N}, \quad \forall \,\text{ Borel set $A\subset \mathbb{T}^2\times \mathbb{T}^2$} \quad \Rightarrow  \quad \frac1{3}\,\nu_n(A) \leqslant H_*(\mu_n) (A) \leqslant  3\,\nu_n(A).
\end{equation*}
Let $\eta_k:=H_*(\mu_{n_k})$ be a subsequence converging to a probability measure $\nu_0$ in the weak$^*$ topology. Since $\zeta_k:=\nu_{n_k}$ converges to $\nu_{\mathrm{max}}$, it follows from Lemma~\ref{l.1} that $\nu_0$ and $\nu_{\mathrm{max}}$ are equivalent measures. On the other hand, as $\nu_{\mathrm{max}}$ is ergodic, Lemma~\ref{l.2} implies that $\nu_0=\nu_{\mathrm{max}}$.
\end{proof}

We now return to the proof of the two pending lemmas.

\begin{proof}[Proof of Lemma~\ref{l.1}]
By symmetry of the inequality~\eqref{e.noite}  it is enough  to check that for every open set $U$ of $ \mathbb{T}^2\times \mathbb{T}^2$ we have $\eta(U)\,\leqslant C \,\zeta(U).$ Indeed, due the regularity of the measures $\zeta$ and $\eta$, from the previous inequality we get, for every Borel set $A$ in $\mathbb{T}^2\times \mathbb{T}^2$,
\begin{equation*}
\eta(A) = \inf\,\{\eta(G):  \,G\,\, \mbox{is open and}\,\, A \subset G\} \,\,  \leqslant \,\, C\,\inf\,\{\zeta(G): \, G\,\, \mbox{is open and}\,\, A \subset G\} = C\,\zeta(A).
\end{equation*}
So, $\zeta(A)=0$ implies $\eta(A)=0$.

Now, consider the sequence of closed sets in $ \mathbb{T}^2\times \mathbb{T}^2$ defined by
$$k \, \in\, \N \quad \mapsto \quad F_k = \Big\{x \in X : \,d(x, X\setminus U) \geqslant \frac{1}{k}\Big\}.$$
From Uryshon's Lemma there exists a continuous function $g_k : X \to [0,1]$ such that
\begin{equation*}\label{e.ury}
\mathbbm{1} _{F_k} \leqslant g_k \leqslant \mathbbm{1}_U\,,  \qquad \forall \, k \in \N.
\end{equation*}
We may assume that $g_k$ converges to $\mathbbm{1} _U$ in a monotonic and increasing way. Thus,
\begin{eqnarray*}
\eta(U) &=& \sup_k \int g_k \, d\eta \qquad \quad \text{(by the Monotone Convergence Theorem)}\\
&=& \sup_k \,\lim_n \int g_k \, d\eta_n \qquad \quad \text{(by the weak$^*$ convergence of $(\eta_n)_{n \,\in \, \mathbb{N}}$)}\\
&\leqslant& C\,\sup_k \,\lim_n \int g_k \, d\zeta_n \qquad \quad \text{(by equation~\eqref{e.noite})}\\
&=& C\,\sup_k \int g_k \, d\zeta \qquad \quad \text{(by the weak$^*$ convergence of $(\zeta_n)_{n \,\in \, \mathbb{N}}$)}\\
&=& C\,\zeta(U) \qquad \quad \text{(by the Monotone Convergence Theorem)}.
\end{eqnarray*}
\end{proof}

\begin{proof}[Proof of Lemma~\ref{l.2}]
Consider a Borel set $A\subset X$. By Birkhoff's Ergodic Theorem we have
$$\phi_A(x)\,:=\, \lim_{n\,\to\,+\infty}\,\frac1{n}\,\Big\{0 \leqslant j \leqslant n-1: \,f^{j}(x) \in A \Big\}= \mu(A)$$
for $\mu$-almost every $x \in X$, and $\nu(A)=\int \phi_A(x)\,d\nu(x)$. Since $\nu \ll \mu$, we also get  $\phi_A(x)=\mu(A)$ for $\nu$-almost every $x$. So,
$\int \phi_A(x)\,d\nu(x)=\mu(A)$. Hence $\nu(A)=\mu(A)$.
\end{proof}

\begin{cor}\label{cor.cl}
The sequence $(\mu_n)_{n \,\in \, \mathbb{N}}$ converges to $\mu_{\mathrm{max}}$ in the weak$^*$ topology.
\end{cor}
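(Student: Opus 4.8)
The plan is to derive the convergence $\mu_n \to \mu_{\mathrm{max}}$ from Proposition~\ref{l.calor} together with the uniqueness of the maximal entropy measure of $F_S$. First I would note that each $\mu_n$ is a well-defined element of $\mathscr{P}(\mathbb{T}^2\times\mathbb{T}^2,F_S)$: since $F_S\in\mathfrak{R}$ all its periodic points are hyperbolic, so $\mathrm{Per}_n(F_S)$ is finite, and $F_S$ permutes this set. As $\mathscr{P}(\mathbb{T}^2\times\mathbb{T}^2)$ is compact and metrizable, it suffices to check that every weak$^*$ accumulation point $\mu_0$ of $(\mu_n)_n$ equals $\mu_{\mathrm{max}}$; such a $\mu_0$ is automatically $F_S$-invariant because $\mathscr{P}(\mathbb{T}^2\times\mathbb{T}^2,F_S)$ is weak$^*$-closed.

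Next, fixing a subsequence with $\mu_{n_k}\to\mu_0$, the continuity of $H$ gives $H_*(\mu_{n_k})\to H_*(\mu_0)$, while Proposition~\ref{l.calor} gives $H_*(\mu_{n_k})\to\nu_{\mathrm{max}}$; hence $H_*(\mu_0)=\nu_{\mathrm{max}}$. By the semi-conjugation \eqref{e.rubik}, the system $(\Phi\times L,\nu_{\mathrm{max}})$ is then a measure-theoretic factor of $(F_S,\mu_0)$, so by the monotonicity of metric entropy under factor maps (\cite[Theorem 4.11]{W1981}, invoked exactly as in the proof of Proposition~\ref{prop.SRB}) one obtains
$$h_{\mu_0}(F_S)\;\geqslant\;h_{\nu_{\mathrm{max}}}(\Phi\times L)\;=\;h_{\mathrm{top}}(\Phi\times L)\;=\;h_{\mathrm{top}}(F_S),$$
using that $\nu_{\mathrm{max}}$ is the maximal entropy measure of $\Phi\times L$ and Proposition~\ref{prop.Shub-NY}~(a). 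Together with the Variational Principle inequality $h_{\mu_0}(F_S)\leqslant h_{\mathrm{top}}(F_S)$, this forces $h_{\mu_0}(F_S)=h_{\mathrm{top}}(F_S)$, so $\mu_0$ is a measure of maximal entropy of $F_S$; by its uniqueness (established in \cite{NY1983}) we conclude $\mu_0=\mu_{\mathrm{max}}$, which completes the argument.

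I do not expect a genuine obstacle here, since the substantive work has already been done in Proposition~\ref{l.calor}; the remaining step is a soft compactness-plus-uniqueness argument. The only points demanding a little care are verifying that the $\mu_n$, and hence all their weak$^*$ accumulation points, are honestly $F_S$-invariant (so that the factor-entropy inequality applies), and using the correct direction of that inequality, namely $h_{\mu_0}(F_S)\geqslant h_{H_*(\mu_0)}(\Phi\times L)$ rather than its reverse. An alternative, entropy-free route is available: since $\nu_{\mathrm{max}}$ is carried by the injectivity set $\mathcal{A}$ of $H$ (as $\nu_{\mathrm{max}}(\mathcal{E})=1$ and $\mathcal{E}\subset\mathcal{A}$, cf. Proposition~\ref{prop.Shub-NY}~(c)), the equality $H_*(\mu_0)=\nu_{\mathrm{max}}=H_*(\mu_{\mathrm{max}})$ can be lifted through the bijection $H\colon H^{-1}(\mathcal{A})\to\mathcal{A}$ to yield $\mu_0=\mu_{\mathrm{max}}$ directly; but the entropy argument is shorter and entirely self-contained.
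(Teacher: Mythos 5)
Your proposal is correct and follows essentially the same route as the paper: extract a weak$^*$ accumulation point $\mu_0$, use Proposition~\ref{l.calor} and the continuity of $H_*$ to get $H_*(\mu_0)=\nu_{\mathrm{max}}$, conclude $h_{\mu_0}(F_S)=h_{\mathrm{top}}(F_S)$, and invoke uniqueness of the measure of maximal entropy. The only (harmless) variation is that you use just the one-sided factor inequality $h_{\mu_0}(F_S)\geqslant h_{H_*(\mu_0)}(\Phi\times L)$ from \cite[Theorem 4.11]{W1981}, whereas the paper invokes the full Ledrappier--Walters equality via Proposition~\ref{prop.Shub-NY}~(b); your version needs slightly less and is equally valid.
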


\begin{proof}
We will show that $\mu_{\mathrm{max}}$ is the unique weak$^*$ accumulation point of the sequence $(\mu_n)_{n \in \mathbb{N}}$. Suppose that the subsequence $(\mu_{n_k})_{k}$ converges to a probability measure $\mu_0$. We will verify that $h_{\mu_0}(F_S)=h_{\mathrm{top}}(F_S)$, and so, by the uniqueness of the measure of maximal entropy of $F_S$, we deduce that $\mu_0=\mu_{\mathrm{max}}$.

Using Proposition~\ref{prop.Shub-NY} (b) and Ledrappier-Walters' formula, if follows that
\begin{equation}\label{e.canicula}
h_{\eta}(F_S)=h_{H_*(\eta)}(\Phi\times L), \quad \quad \forall \, \eta \,\in \,\mathscr{P}(\mathbb{T}^2 \times \mathbb{T}^2, \, F_S).
\end{equation}
Now, from Proposition~\ref{l.calor} and the continuity of $\eta\to H_*(\eta)$, it follows that
$H_*(\mu)=\nu_{\mathrm{max}}$. Then, by \eqref{e.canicula} and Proposition~\ref{prop.Shub-NY} (a), we obtain
$$h_{\mu}(F_S)=h_{H_*(\mu)}(\Phi\times L)=h_{\nu_{\mathrm{max}}}(\Phi\times L)=h_{\mathrm{top}}(\Phi\times L)=h_{\mathrm{top}}(F_S).$$
\end{proof}

\section{Proof of Theorem~\ref{teo.B}}\label{se.proof-Theorem-B}

We start observing that, as the periodic points of $F_S$ are hyperbolic, the set of periodic points of $F_S$ is countable, and so zero dimensional. Besides, $F_S$ has the small boundary property (cf. \cite[Subsection 2.1]{B2017}; it was proved in \cite{L1995} that on a finite dimensional manifold any dynamical system whose set of periodic points is countable have this property). Moreover, as already mentioned, the central direction of $F_S$ is one-dimensional, and so $F_S$ is entropy-expansive. After summoning Remark~\ref{re.Kal} and Proposition~\ref{prop.Shub-NY} (a), to show the existence of a principal strongly faithful symbolic extension with embedding for $F_S$ we are left to control of the growth rate of the periodic points with the period at arbitrarily small scales.

\begin{lem}\label{le:per-exp} If $F_S$ belongs to the residual $\mathfrak{R}$, then $F_S$ is asymptotically per-expansive.
\end{lem}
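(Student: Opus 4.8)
The plan is to show that $Per^*(F_S)=0$ by transferring the problem, via the semi-conjugation $H$, to the uniformly hyperbolic model $\Phi\times L$, where periodic orbits are already well separated. The key structural fact at our disposal is Corollary~\ref{cor:periodic}: for every $n\in\mathbb{N}$ and every $(x,y)\in\mathrm{Per}_n(\Phi\times L)$, the fiber $H^{-1}(x,y)\cap\mathrm{Per}_n(F_S)$ has cardinality at most $3$, together with the fact (Proposition~\ref{l.p-classes} and Proposition~\ref{prop.DA}) that each such fiber is a compact connected piece of a center manifold whose $F_S^n$-dynamics is Morse-Smale with at most $3$ fixed points (since $F_S\in\mathfrak{R}$). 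So the multiplicity of the "collapsing'' is uniformly bounded by $3$ on periodic orbits.

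First I would fix $\varepsilon>0$ and bound, for each $x\in\mathbb{T}^2\times\mathbb{T}^2$, the quantity $\#\big(\mathrm{Per}_n(F_S)\cap B^{F_S}_{\infty,\varepsilon}(x)\big)$. The idea is that $H$ is uniformly continuous, so there is $\delta=\delta(\varepsilon)>0$ with $d(z,w)<\delta\Rightarrow d(H(z),H(w))<\varepsilon_0$, where $\varepsilon_0>0$ is an expansivity constant for $\Phi\times L$. Because $H$ is proper and at bounded distance from the identity along fibers, one also gets the reverse control needed to compare the dynamical balls: if $p,p'\in\mathrm{Per}_n(F_S)$ lie in a common $(n,\varepsilon)$-dynamical ball and $H(p)\ne H(p')$, then $H(p),H(p')$ lie in a common $(n,\varepsilon_0)$-dynamical ball for $\Phi\times L$ and are distinct periodic points, hence, by expansivity of $\Phi\times L$, must coincide — a contradiction once $\varepsilon$ is small enough. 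Therefore all periodic points of $F_S$ in a single $(n,\varepsilon)$-dynamical ball project under $H$ to a single periodic point of $\Phi\times L$, and so by Corollary~\ref{cor:periodic} there are at most $3$ of them. This yields $\#\big(\mathrm{Per}_n(F_S)\cap B^{F_S}_{\infty,\varepsilon}(x)\big)\leqslant 3$ for all $n$, all $x$, and all sufficiently small $\varepsilon$, whence $Per(F_S,\varepsilon)=\limsup_n\frac1n\log 3=0$ and thus $Per^*(F_S)=0$.

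I expect the main obstacle to be the comparison of dynamical balls in the two systems: $H$ is only a semi-conjugation, not a conjugation, so a priori two $\varepsilon$-close orbits of $F_S$ could be pushed forward to orbits of $\Phi\times L$ that drift apart over time through the accumulated errors of the non-injective fibers. The resolution is that on periodic classes the fibers are exactly the degenerate or non-degenerate center intervals controlled in Section~\ref{se.proof-Theorem-A}: the estimate \eqref{e.pascoa} (two points of $\mathbb{T}^2\times\mathbb{R}^2$ have the same $\widetilde H$-image iff their $\widetilde F_S$-orbits stay at uniformly bounded distance for all time) is precisely what guarantees that nearby periodic points of $F_S$ with the same $H$-image stay uniformly close under all iterates, while those with different $H$-image are pushed apart at a definite exponential rate by the quasi-isometry of $\widetilde{\mathcal W}^u$ (Claim~\ref{cl.quasiquasi}), exactly as in Claim~\ref{cl.16}. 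So the argument reduces to packaging these already-established fiber estimates into the statement that, at scale $\varepsilon$ small compared to the expansivity constant of $\Phi\times L$, a single $(n,\varepsilon)$-dynamical ball of $F_S$ meets at most one $H$-fiber over $\mathrm{Per}_n(\Phi\times L)$, and then invoking the cardinality bound $3$.
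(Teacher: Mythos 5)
Your proposal is correct, and its skeleton matches the paper's: both arguments reduce to showing that, for $\varepsilon$ small, every point of $\mathrm{Per}_n(F_S)$ lying in a single infinite dynamical ball $B^{F_S}_{\infty,\varepsilon}(x_0,y_0)$ has the same image under $H$, and then invoking the bound $\#\big(H^{-1}(P)\cap \mathrm{Per}_n(F_S)\big)\leqslant 3$ for $P\in\mathrm{Per}_n(\Phi\times L)$. Where you differ is in the mechanism for the first step. The paper uses plaque expansiveness of the central foliation (via Hirsch--Pugh--Shub) to place all such points on one central leaf, which $H$ sends into a single stable leaf of $\Phi\times L$, and then argues that a stable leaf cannot carry two distinct $n$-periodic points. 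You instead push the dynamical ball forward through $H$: since $H\circ F_S^i=(\Phi\times L)^i\circ H$ for all $i\in\mathbb{Z}$ and $H$ is uniformly continuous, two periodic points whose $F_S$-orbits stay $2\varepsilon$-close have $(\Phi\times L)$-orbits staying within the expansivity constant, so their $H$-images coincide by expansivity of the Anosov factor. This is more elementary (no plaque expansiveness, no leaf analysis) and, incidentally, your worry about ``accumulated drift'' is unfounded precisely because the semi-conjugacy is exact: the distance of the image orbits at time $i$ is controlled by one application of uniform continuity at time $i$, with nothing to accumulate. Two small points of hygiene: the quantity in the definition of $Per(F_S,\varepsilon)$ involves the bi-infinite ball $B^{F_S}_{\infty,\varepsilon}$, not the finite $(n,\varepsilon)$-ball you occasionally name — this is harmless here (and for $n$-periodic points the two notions coincide by periodicity), and you should make explicit that $H(p)\in\mathrm{Per}_n(\Phi\times L)$ whenever $p\in\mathrm{Per}_n(F_S)$ so that Corollary~\ref{cor:periodic} applies to the common fiber.
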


\begin{proof} Given $\varepsilon>0$ and $(x_0,y_0)\in \mathbb{T}^2\times\mathbb{T}^2$, define
$$B^{F_S}_{\infty, \,\varepsilon}(x_0,y_0) := \Big\{\,\,(x,y) \in \mathbb{T}^2\times\mathbb{T}^2\,: \,\,d(F_S^i(x,y),F_S^i(x_0,y_0)) \leqslant \varepsilon, \quad \forall \,i \in \mathbb{Z}\,\,\Big\}.$$
We claim that
$$\forall \, \varepsilon > 0\quad \forall \, n \in \mathbb{N} \quad \forall \, (x_0,y_0) \in \mathbb{T}^2\times\mathbb{T}^2 \quad \quad \#\,\left(\mathrm{Per}_n(F_S) \cap B^{F_S}_{\infty, \,\varepsilon}(x_0,y_0)\right) \leqslant 3.$$
Firstly note that the central foliation of $F_S$ is plaque expansive (cf. \cite[Sections 7 \& 8]{HPS}): there exists $\varepsilon_0 > 0$ such that, for every $0 < \varepsilon < \varepsilon_0$, if $(x,y)$ belongs to $B^{F_S}_{\infty, \,\varepsilon}(x_0,y_0)$, then both points $(x_0,y_0)$ and $(x,y)$ lie on the same leaf of the central foliation, which is sent by the semi-conjugation $H$ into a stable leaf. On the other hand, if $\mathrm{Per}_n(F_S) \cap B^{F_S}_{\infty, \,\varepsilon}(x_0,y_0) \neq \emptyset$ then $x_0$ is periodic and so, by Proposition~\ref{prop.DA}, $g^n_{x_0}$ is an Anosov or a Derived from Anosov. In the former
 case, $B^{F_S}_{\infty, \,\varepsilon}(x_0,y_0)\subset B^{F^n_S}_{\infty, \,\varepsilon}(x_0,y_0)=\{(x_0,y_0)\}$. In the latter case, the intersection cannot have more than three periodic points: otherwise, if we assume the existence of at least four elements in $\mathrm{Per}_n(F_S)$ in $B^{F_S}_{\infty, \,\varepsilon}(x_0,y_0)$, then we may find two hyperbolic point $(x_0,y_1)$ and $(x_0,y_2)$ in $\mathrm{Per}_n(F_S) \cap B^{F_S}_{\infty, \,\varepsilon}(x_0,y_0)$ such that $H(x_0,y_1)\neq H(x_0,y_2)$ are in $\mathrm{Per}_n(\Phi\times L)$ and belong to the same stable leaf of $\Phi \times L$. This contradicts the known dynamics within stable leaves.
\end{proof}

To end the proof of Theorem~\ref{teo.B} we just make a straightforward application of the Main Theorem of \cite{B2017}.

\section{Proof of Theorem~\ref{teo.C}}\label{se.proof-Theorem-C}

Suppose that $\Phi$ is a linear hyperbolic automorphism of $\mathbb{T}^2$ and let $\nu_{\mathrm{SRB}}$ be the SRB measure of $\Phi\times L$. Denote by $\mu_{\mathrm{max}}$ and $\nu_{\mathrm{max}}$ the measures of maximal entropy of $F_S$ and $\Phi \times L$, respectively. Similarly, let $\mu_{\mathrm{SRB}}$ and $\nu_{\mathrm{SRB}}$ the SRB measures of $F_S$ and $\Phi \times L$.

Consider the expanding eigenvalues $\beta_1 > 1$ and $\beta_2 > 1$ of $\Phi$ and $L$, respectively. By Pesin's formula, the topological entropy of $\Phi\times L$ is given by
$$h_{\mathrm{top}}(\Phi\times L)=\log \,\beta_1 + \log\,\beta_2.$$
Note also that, on the corresponding regular sets, the positive Lyapunov exponents $\lambda^{uu} > \lambda^{u}>0$ of $\mu_{\mathrm{max}}$ and $\nu_{\mathrm{max}}$ are given by
$$\lambda^{uu}\big(\mu_{\mathrm{max}}\big) = \lambda^{uu}\big(\nu_{\mathrm{max}}\big) = \log\,\beta_1 \,\,\quad \text{and} \,\,\quad \lambda^{u}\big(\mu_{\mathrm{max}}\big) = \lambda^{u}\big(\nu_{\mathrm{max}}\big) = \log\,\beta_2.$$
Moreover, under the assumption that both $\Phi$ and $L$ are linear automorphisms of $\mathbb{T}^2$, the measure $\nu_{\mathrm{max}}$ coincides with the Lebesgue measure in $\mathbb{T}^2\times\mathbb{T}^2$ (cf. \cite[Theorem 8.15]{W1981}). Besides, $\nu_{\mathrm{max}}=\nu_{\mathrm{SRB}}$ since the mapping $(x,y)\,\mapsto\, J^u_{\Phi\times L}(x,y)$ is constant and equal to $\beta_1\,\beta_2$, and so
$$h_{\nu_{\mathrm{SRB}}}(\Phi\times L)= \int\,\log\, J^u_{\Phi\times L}\,\,d\nu_{\mathrm{SRB}} = \log \,\beta_1 + \log\, \beta_2 = h_{\mathrm{top}}(\Phi\times L).$$

To prove Theorem~\ref{teo.C} (a), we note that, by construction, for every $(x,y) \in \mathbb{T}^2 \times \mathbb{T}^2$ we have
$$J^u_{F_S}(x,y)\geqslant \beta_1\,\beta_2.$$
So, $J^u_{F_S}$ and $J^u_{\Phi\times L}$ satisfy the assumption \eqref{e.eq1} of Proposition~\ref{prop.SRB}. Therefore, one has
$$H_*(\mu_{\mathrm{SRB}}) = \nu_{\mathrm{SRB}}.$$
To prove Theorem~\ref{teo.C} (b), we use Proposition~\ref{prop.Shub-NY} (a) to deduce that
$$h_{\mu_{\mathrm{SRB}}}(\Phi\times L)= \int\,\log\, J^u_{F_S}\,\,d\mu_{\mathrm{SRB}} \geqslant \log \,\beta_1 + \log\,\beta_2 = h_{\mathrm{top}}(\Phi\times L)=h_{\mathrm{top}}(F_S)$$
and thereby conclude that $h_{\mu_{\mathrm{SRB}}}(\Phi \times L) = h_{\mathrm{top}}(F_S)$, as claimed. The fact that $\mu_{\mathrm{SRB}}$ is also the unique physical measure of $F_S$ has already been proved in Proposition~\ref{prop.minimal}.





\end{document}